\newcommand{\R}{\ensuremath{\mathbb{R}}}
\newcommand{\N}{\ensuremath{\mathbb{N}}}
\newcommand{\s}{\Sigma}
\newcommand{\la}{\lambda}
\newcommand{\e}{\varepsilon}
\newcommand{\V}{\mathcal{V}}
\newcommand{\Cr}{\mathcal{C}^{r}}
\newcommand{\Xr}{\chi^{r}}
\newcommand{\Or}{\Omega^{r}}
\newcommand{\rn}[1]{\mathbb{R}^{#1}}
\newcommand{\U}{\mathcal{U}}
\newcommand{\er}{\mathcal{O}}
\newcommand{\p}{\varphi}
\newcommand{\ag}{\alpha}
\newcommand{\bg}{\beta}
\newcommand{\cg}{\gamma}
\newcommand{\dg}{\delta}
\newcommand{\sgn}{\textrm{sgn}}
\newtheorem {theorem} {Theorem} 
\newtheorem {prop}  {Proposition}
\newtheorem {corollary}  {Corollary}
\newtheorem {lemma}  {Lemma}
\newtheorem {definition}  {Definition}
\newtheorem {remark}  {Remark}
\newtheorem {example} {Example}
\DeclareFontFamily{U}{tipa}{}
\DeclareFontShape{U}{tipa}{m}{n}{<->tipa10}{}
\newcommand{\arc@char}{{\usefont{U}{tipa}{m}{n}\symbol{62}}}%
\newcommand{\arc}[1]{\mathpalette\arc@arc{#1}}
\newcommand{\arc@arc}[2]{%
	\sbox0{$\m@th#1#2$}%
	\vbox{
		\hbox{\resizebox{\wd0}{\height}{\arc@char}}
		\nointerlineskip
		\box0
	}%
}
\newcommand{\arcf}[3]{\arc{#1#2}|_{#3}}
\definecolor{verde}{rgb}{0.0,0.5,0.0}
\definecolor{azul}{rgb}{0,0,128}
\definecolor{roxo}{rgb}{0.44,0.16,0.39}
\definecolor{vinho}{rgb}{0.5,0.0,0.13}
\definecolor{lilas1}{rgb}{0.6,0.33,0.73}
\definecolor{rosa}{rgb}{0.84,0.04,0.33}
\definecolor{mostarda}{rgb}{0.91,0.41,0.17}
\definecolor{mostarda2}{rgb}{1.0,0.66,0.07}
\newtheorem {mtheorem} {Theorem} 
\begin{document}

\title[Bifurcation Diagrams of Global Connections in Filippov Systems]{Bifurcation Diagrams of Global Connections\\ in Filippov Systems}

\author[K. S. Andrade]{Kamila S. Andrade}
\address[KSA]{Departamento de Matem\'{a}tica, Instituto de Matem\'{a}tica e Estat\'{i}stica  (IME),  Universidade Federal de Goi\'{a}s (UFG), 74690-900, Goi\^ania, GO, Brazil.}
\email{kamila.andrade@ufg.br}

\author[O. M. L. Gomide]{Ot\'avio M. L. Gomide}
\address[OMLG]{Departamento de Matem\'{a}tica, Instituto de Matem\'{a}tica e Estat\'{i}stica  (IME),  Universidade Federal de Goi\'{a}s (UFG), 74690-900, Goi\^ania, GO, Brazil.}
\email{otaviomarcal@ufg.br}

\author[D. D. Novaes]{Douglas D. Novaes}
\address[DDN]{Departamento de Matem\'{a}tica, Instituto de Matem\'{a}tica, Estat\'{i}stica e Computa\c{c}\~{a}o Cient\'{i}fica (IMECC), Universidade
Estadual de Campinas (UNICAMP), Rua S\'{e}rgio Buarque de Holanda, 651, Cidade Universit\'{a}ria Zeferino Vaz, 13083--859, Campinas, SP,
Brazil.}
\email{ddnovaes@unicamp.br}

\subjclass[2010]{34A36, 34C23, 37G15, 34C37}

\keywords{piecewise smooth differential system, Filippov system, regular--tangential singularity, polycycle, bifurcation theory}

\maketitle


\begin{abstract}
In this paper, we are concerned about the qualitative behavior of planar Filippov systems around some typical invariant sets, namely, polycycles. In the smooth context, a polycycle is a simple closed curve composed by a collection of singularities and regular orbits, inducing a first return map.  Here, this concept is extended to Filippov systems by allowing typical Filippov singularities lying on the switching manifold. Our main goal consists in developing a method to investigate the unfolding of polycycles in Filippov systems. In addition, we apply this method to describe bifurcation diagrams of Filippov systems around certain polycycles.  
\end{abstract}

\section{Introduction and statement of the main results}

In 1882, the concept of limit cycle was introduced by Henri Poincar\'{e} and, since then, the detection of such an object has become one of the most interesting and complicated problems in the Qualitative Theory of Dynamical System. Over the years, other global structures were investigated, and the concept of polycycle have been established. Roughly speaking, a polycycle is a simple closed curve composed by a collection of singularities and regular orbits, inducing a first return map. This class of invariant sets has been extensively studied in the literature as in the so-called \textit{Dulac's Problem}.

In the present paper, we aim to study such objects in the context of planar Filippov systems, that is, planar piecewise smooth systems having their trajectories ruled by the Filippov's convention \cite{F} (its formal definition will be introduced right below). We are mainly motivated by classical and recent studies that addressed a special kind of polycycles, namely the homoclinic ones, that is, a  regular trajectory connecting  a singular  point  to itself. 
 
In the last years, homoclinic-like polycycles of planar Filippov systems have received attention of the mathematical community. In \cite{KRG}, Kuznetsov et al. provided a catalog of bifurcations occurring in one-parameter families of Filippov systems. Among them, they presented the \textit{critical crossing cycle bifurcation} ($CC$-bifurcation), which consists in a one-parameter family $Z_{\ag}$ of Filippov systems, for which $Z_0$ has a homoclinic-like polycycle at a fold-regular singularity.  In \cite{GTS},  by means of Bifurcation Theory, Guardia et al. approached the $CC$-bifurcation phenomenon in a more general setting than the one presented in \cite{KRG}. Finally, in \cite{FPT15}, Freire et al. showed that the unfolding of a $CC$-bifurcation provided by \cite{KRG} holds in a generic scenario.  It is worth mentioning that such a global phenomenon has already appeared in the local unfolding of $\s$-singularities with higher degeneracies \cite{BCT12}. 

Recently, more degenerated homoclinic-like polycycles through $\s$-singularities were considered. In \cite{NTZ18}, Novaes et al. studied a codimension-two homoclinic-like polycycle at a visible--visible fold-fold singularity and provided its complete bifurcation diagram. In \cite{A16,AJMT17}, Andrade et al.  studied   a class of systems presenting a homoclinic-like polycycle at a saddle-regular singularity (also know as \textit{boundary-saddle singularity}), they also described some bifurcations and a physical model realizing such a connection. Homoclinic-like polycycles of Filippov systems has also been considered in the context of regularization process (see, for instance, \cite{BS16,NR21,NR22}).

Polycycles through more than one $\s$-singularity have also appeared in the literature. For instance, in \cite{BPET18}, Benadero et al. studied a nonsmooth model of electronic circuits with power inverters admitting a polycycle passing through two fold-regular singularities. Other examples of polycycles through  $\s$-singularities appeared in \cite{BPET18, Fei1,Fei2,LH13, LW17, LR14}.  

Here, we shall develop a rather general method  to deal with polycycles going through tangential singularities. More specifically,  a mechanism will be developed for detecting crossing bifurcation phenomena. We then apply it to obtain the complete bifurcation diagram of Filippov systems around certain polycycles.

Before presenting our main results in the end of this section, we introduce the formal definition of  Filippov  systems and some basic concepts needed  for defining the class of  polycycles we shall consider.

\subsection{Filippov systems}\label{prem}

In the theory of nonsmooth dynamical systems, the notion of solutions of a piecewise smooth differential system 
\begin{equation}\label{eq:filippov}
Z(p)=\left\{\begin{array}{ll}
X(p),&h(p)>0,\\
Y(p),&h(p)<0,
\end{array}\right.\quad p\in M,
\end{equation}
is stated by the Filippov's convention (see \cite{F}). In this way, \eqref{eq:filippov} is called by Filippov system. Here, $M$ is an open bounded  connected set of $\rn{2}$ and  $h:M\rightarrow \rn{}$ is a smooth function having $0$ as a regular value. Therefore, $\Sigma=h^{-1}(0)$ is an embedded codimension one submanifold of $M$ which splits it in the sets $M^{\pm}=\{p\in M; \pm h(p)>0\}$.
The Filippov system \eqref{eq:filippov} is concisely denoted by $Z=(X,Y).$  The space of Filippov vector fields  $Z=(X,Y),$ where $X$ and $Y$  are $\Cr$ vector fields, is denoted by $\Omega^r.$

In order to illustrate the Filippov's convention, we distinguish the following open regions of the switching manifold $\Sigma$: (Crossing Region) $\Sigma^{c}=\{p\in \Sigma;\ Xh(p)Yh(p)>0\}$, (Stable Sliding Region) $\Sigma^{ss}=\{p\in \Sigma;\ Xh(p)<0,\ Yh(p)>0\},$ and (Unstable Sliding Region) $\Sigma^{us}=\{p\in \Sigma;\ Xh(p)>0,\ Yh(p)<0\},$ where $Xh(p)$ denotes the \textbf{Lie derivative} of $h$ in the direction of the vector field $X\in \Xr$ at $p\in\Sigma$ and is defined as $Xh(p)=\langle X(p), \nabla h(p)\rangle$. 
	
The local solution of $Z=(X,Y)\in \Or$ at $p\in\Sigma^c$ is given by the concatenation of the local solutions of $X|_{M^+}$ and $Y|_{M^+}$ at $p$. The local solution of $Z=(X,Y)\in \Or$ at $p\in\Sigma^c$ is given by the \textbf{sliding vector field}
\begin{equation*}
F_{Z}(p)=\frac{1}{Yh(p)-Xh(p)}\left(Yh(p)X(p)-Xh(p)Y(p)\right).
\end{equation*}
Notice that $F_{Z}$ is a vector field tangent to $\s^{s}$. The singularities of $F_{Z}$ in $\s^{s}$ are called \textbf{pseudo-equilibria} of $Z$.

The \textbf{tangency set} between $X$ and $\s$ is given by $S_{X}=\{p\in\Sigma;\ Xh(p)=0\}$. Accordingly, the tangency set of $Z$ will be referred as $S_{Z}=S_{X}\cup S_{Y}$. Notice that $\s$ is the disjoint union $\s^{c}\cup \s^{ss}\cup \s^{us}\cup S_{Z}$. Herein, $\s^{s}=\s^{ss}\cup \s^{us}$ is called \textbf{sliding region} of $Z$.  A point $p\in S_Z$ is called \textbf{tangential singularity} of $Z$ provided that $X(p), Y(p)\neq 0$;

We say that $p\in \Sigma$  is a \textbf{$\s$-singularity} of $Z$ provided that $p$ is either a tangential singularity, an equilibrium of $X$ or $Y$, or a pseudo-equilibrium of $Z$.

\begin{definition}
	 $X\in\Xr$ has an \textbf{$n$-multiplicity contact} (or $(n-1)$-order contact) with $\s$ at $p$ if $X^{i}h(p)=0$, for $i=1,\cdots, n-1$, and $X^{n}h(p)\neq 0$. In particular, for $n=2,3$, $p$ is said to be a \textbf{fold point} and  \textbf{cusp point} of $X,$ respectively.
\end{definition}

\begin{definition}\label{tangs}
Let $p\in\s$ be a tangential singularity of $Z=(X,Y)$, we say that $p$ is:
\begin{enumerate}[i)]
\item a {\bf regular-tangential singularity of multiplicity $n$} of $Z$ provided that $X$ (resp. $Y$) has a $n$-multiplicity contact with $\Sigma$ at $p$ and $Yh(p)\neq0$ (resp. $Xh(p)\neq0$);

\item a \textbf{tangential--tangential singularity} of $Z$ provided that $Xh(p)=Yh(p)=0$.
\end{enumerate}
\end{definition}

\begin{remark}
In  Definition \ref{tangs} i), for $n=2$ and $n=3,$ $p$ is said to be a \textbf{regular-fold singularity} and  \textbf{regular-cusp singularity} of $Z$, respectively.
\end{remark}

In the Filippov context, special attention must be paid to some singularities lying on $\Sigma,$ known as $\Sigma$-singularities, which also present local invariant manifolds.

Now, motivated by \cite{GTS}, we define the concept of local separatrix at a point $p\in\s$, which will play an important role in this paper. 
\begin{definition}
If $p\in\s$, the \textbf{stable (unstable) separatrix} $W^{s}_{\pm}(p)$ ($W^{u}_{\pm}(p)$) of $Z=(X_{+},X_{-})$ at a tangential singularity $p$ in $\s^{\pm}$ is defined as
	$$W^{s,u}_{\pm}(p)=\{q=\p_{X_{\pm}}(t(q),p);\ \p_{X_{\pm}}(I(q),p)\subset \s^{\pm}\ \textrm{and } \dg_{s,u}t(q)>0\},$$
	where, $\p_{X_{\pm}}$ is the flow of $X_{\pm}$, $\dg_{u}=1$, $\dg_{s}=-1$, and $I(q)$ is the open interval with extrema $0$ and $t(q)$.
\end{definition}

\subsection{Polycycles of Filippov systems}
The $\s$-singularities above admit global connections, which have no counterpart in the smooth context. In this way, the concept of polycycle can carried to Filippov systems. 
 In the next definition, we establish a variation of the classical concept of polycycle for Filippov systems.

\begin{definition}\label{def_scyclegeneralized}
	A closed curve $\Gamma$ is said to be a \textbf{polycycle} of $Z=(X,Y)$ if it is composed by a finite number of points, $p_1,p_2,\ldots,p_n$ and a finite number of regular orbits  of $Z$, $\gamma_1,\gamma_2,\ldots,\gamma_n$, such that for each $1\leq i\leq n$, $\gamma_{i}$ has ending points $p_i$ and $p_{i+1}$, where $p_{n+1}=p_1$. Moreover:
	\begin{enumerate}[i)]
		\item $\Gamma$ is homeomorphic to  $S^{1}$
		and it is oriented by increasing time along the regular orbits;
		\item if $p_{i}\in\s$ then it is a $\s$-singularity;
		\item if $p_{i}\in M^{\pm}$ then it is an equilibrium of either $X\big|_{M^+}$ or $Y\big |_{M^{-}}$;
		\item there exists a non-constant first return map defined, at least, in one side of $\Gamma$.
	\end{enumerate}
	In particular, if $p_{i}\in\s$, for all $1\leq i\leq n$, then $\Gamma$ is said to be a \textbf{$\s$-polycycle} which is referred as a \textbf{regular-tangential $\s$-polycycle} or a \textbf{tangential--tangential $\s$-polycycle} when all the $\s$-singularities of $Z$ contained in $\Gamma$ are regular-tangential singularities or tangential--tangential singularities, respectively.
\end{definition}

In the definition above, the {\bf homoclinic-like} polycycles mentioned above corresponds to the case $n=1$.

\subsection{Main results}

In what follows, we provide a briefly description of the results contained in this paper.

As usual, we say that $\gamma$ is a regular orbit of $Z=(X,Y)$ if it is a piecewise smooth curve such that $\gamma\cap M^{+}$ and $\gamma\cap M^{-}$ are unions of regular orbits of $X$ and $Y$, respectively, and  $\gamma\cap\s\subset\s^{c}$. In this case, $\partial \gamma$ is referred as the \textbf{ending points} of $\gamma$. Accordingly, a \textbf{cycle} is a closed regular orbit $\Gamma$ of $Z$. If $\Gamma\cap \s\neq \emptyset$, then $\Gamma$ is called a \textbf{crossing cycle} of $Z$.

One of our main goals in this paper is to characterize qualitatively the systems in a neighborhood of a closed curve. To do this we introduce the following notion on equivalence at a compact set. 

\begin{definition}\label{equiv}
	Let $\mathcal{K}$ be a compact set of $M$. We say that $Z$ and $Z_0$ are (topologically) \textbf{equivalent} at $\mathcal{K}$ if there exist neighborhoods $U$ and $V$ of $\mathcal{K}$ and an orientation preserving homeomorphism $h:U\rightarrow V$ which carries orbits of $Z$ onto orbits of $Z_0$. 
\end{definition}

 Following the techniques used in \cite{AJMT17,NTZ18}, we develop a mechanism, named \textit{Method of Displacement Functions} (see Section \ref{approach_sec}), to study the unfolding of $\s$-polycycles in a typical scenario.

Generally speaking, given a Filippov system $Z_0$ having a $\s$-polycycle $\Gamma_0$, the proposed method associates each $Z$ near $Z_0$ to a system of nonlinear equations, called {\bf crossing system}, which provides information on the crossing orbits of $Z$ in a neighborhood of $\Gamma_0$.  This system depends smoothly on $Z$ and is called {crossing system}.

Next result concerns about the crossing dynamics that bifurcates from a \(\s\)-polycycle. More specifically, it establishes that if $\gamma_0$ is a $\s$-polycycle of  $Z_0\in\Or$ and \(\mathcal{A}\) a neighborhood of \(\Gamma_0\) then the crossing dynamics, in \(\mathcal{A}\), of small perturbations of \(Z_0\) are totally characterized by the \(\s\)-singularities contained in \(\Gamma_0\).  
\begin{mtheorem}\label{crossingthm}
Let $Z_0\in\Or$ having a $\s$-polycycle $\Gamma_0$. There exist neighborhoods $\V$ and $\mathcal{A}$ of $Z_0$ in $\Or$ and $\Gamma_0$ in $\rn{2}$, respectively, in such a way that the crossing system associated to $Z$ is defined in $\mathcal{A}$ and it is completely characterized by the types of $\s$-singularities appearing in $\Gamma_0$.
\end{mtheorem}

Theorem \ref{crossingthm} is a consequence of Theorems \ref{charac1} and \ref{charac2} stated and proved in Section \ref{charac_sec}. Furthermore, Theorems \ref{charac1} and \ref{charac2} give more details on the characterization of the crossing system, nevertheless their statements require some technical constructions which are made in Section \ref{approach_sec}, and for this reason, we omit the details of the characterization in Theorem \ref{crossingthm} and invite the reader to visit these propositions.

Now we use Theorem \ref{crossingthm} to obtain a complete description of the bifurcation diagrams of certain $\s$-polycycles. First, we study the unfolding of $\s$-polycycles admitting a unique $\s$-singularity of regular-cusp type.  Recall that $Z_0=(X_0,Y_0)$ has a regular-cusp singularity at $p_0\in\s$ if $X_0$ has a contact of multiplicity $3$ with $\s$ at $p_0$ and $Y_0$ is transverse to $\s$ at $p_0$, or vice-versa. Denote by $\Omega_{RC}\subset \Or$ the class of Filippov systems admitting a $\s$-polycycle having a unique $\s$-singularity of regular-cusp type. 

Next result provides the bifurcation diagram of $Z_0\in\Omega_{RC},$ around the \(\s\)-polycycle \(\Gamma_0\). It is better detailed in Theorem \ref{regcuspcycle} which will be established in Section \ref{cusp_sec}. Roughly speaking, this theorem guarantees that crossing limit cycles generically bifurcate  from \(\Gamma_0\) as well as other types of \(\s\)-polycycles through fold-regular singularities having or not sliding segments.  Moreover, it shows that \(\Gamma_0\) is a global connection of codimension two and describes all codimension zero and one phenomena that occurs, near \(\Gamma_0\), for vector fields near \(Z_0\) in \(\Or\).

\begin{mtheorem} \label{formalregcuspcycle}
Given $Z_0\in\Omega_{RC},$  there exist of neighborhoods $\V\subset \Or$ of $Z_0$ and $V\subset\R^2$ of the origin and a surjective function $(\beta,\lambda_1):\V\rightarrow V,$ with $(\beta,\lambda_1)(Z_0)=(0,0)$, such that the parameters $(\beta,\lambda_1)$ completely describe the bifurcation diagram of $Z_0$ around its $\s$-polycycle given by Figure \ref{bifcr1}.
\end{mtheorem}

Theorem \ref{formalregcuspcycle} is equivalent to Theorem \ref{regcuspcycle} of Section \ref{cusp_sec} which contains the complete description of the bifurcation diagram presented in Figure \ref{bifcr1}.

\begin{figure}[H]
	\centering
	\bigskip
	\begin{overpic}[width=13cm]{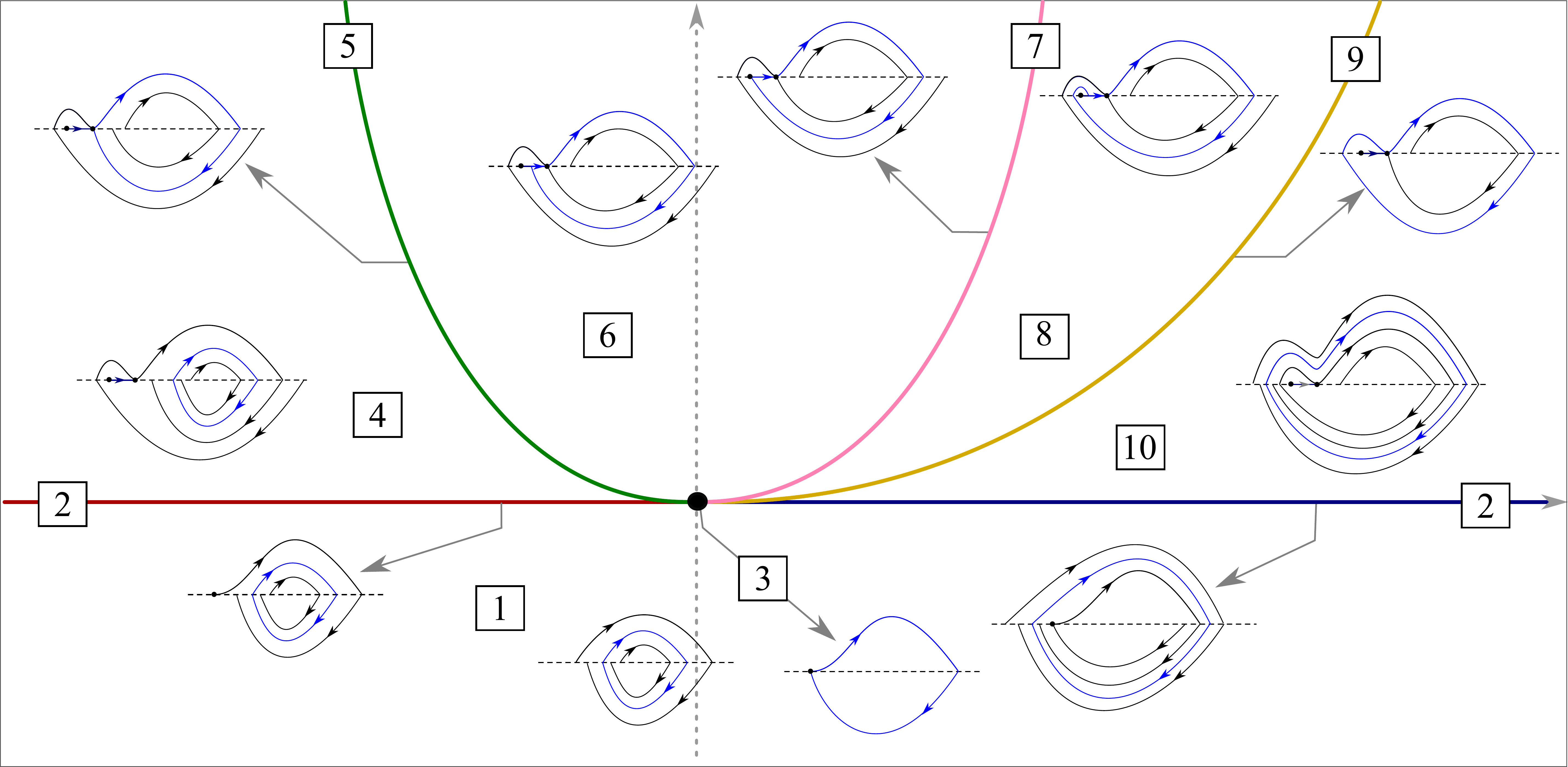}
		\put(97.5,14.5){{\footnotesize $\bg$}}
		\put(46,47){{\footnotesize $\lambda_1$}}
		\put(24,42){{\footnotesize $\overline{V}$}}
		\put(63,40){{\footnotesize $\overline{I}$}}
		\put(77,35){{\footnotesize $\overline{A}$}}
	\end{overpic}
	\bigskip
	\caption{Bifurcation diagram of $Z_0\in\Omega_{RC}$ around $\Gamma_0$. $\overline{V}$, $\overline{I}$, $\overline{A}$ and the $\beta$-axis are codimension-one bifurcation curves. All the crossing limit cycles appearing in the diagram are hyperbolic.} \label{bifcr1}
\end{figure} 

For a practical model realizing such bifurcation diagram see Example \ref{example1}.

In light of the extensively studied critical crossing cycle bifurcation, we consider a generalization of such a $\s$-polycycle. More specifically, we allow the $\s$-polycycle to have two $\s$-singularities of fold-regular type, instead of only one. Denote by $\Omega_{DRF}\subset \Or$ the class of Filippov systems admitting a $\s$-polycycle $\Gamma$ having exactly two $\s$-singularities, $p_1$ and $p_2,$ satisfying
\begin{enumerate}[i)]
	\item $p_1$ and $p_2$ are regular-fold singularities of $Z_0$;
	\item there exist two curves $\gamma_1$ and $\gamma_2$ connecting $p_1$ and $p_2$, oriented from $p_1$ to $p_{2}$ and from $p_2$ to $p_1$, respectively, 
	such that $\Gamma=\gamma_1\cup\gamma_2$, $\gamma_1$ is tangent to $\s$ at $p_1$ and transverse to $\s$ at $p_2$, and $\gamma_2$ is tangent to $\s$ at $p_2$ and transverse to $\s$ at $p_1$.
\end{enumerate}

Similarly to the previous theorem, next result is better detailed in Theorem \ref{bifdig_foldregular} which will be established in Section \ref{2fold-fold}. Roughly speaking, if  $Z_0\in\Omega_{DRF}$ with the \(\s\)-polycycle \(\Gamma_0\) as stated above, next theorem guarantees that crossing limit cycles generically bifurcate  from \(\Gamma_0\) as well as other types of \(\s\)-polycycles through one or two fold-regular singularities having or not sliding segments. Moreover, it shows that \(\Gamma_0\) is a global connection of codimension two and describes all codimension one and zero phenomena that occurs, near \(\Gamma_0\), for vector fields near \(Z_0\) in \(\Or\).

\begin{mtheorem}\label{formalbifdig_foldregular}
	Given $Z_0\in\Omega_{DRF},$ there exist neighborhoods $\V\subset \Or$ of $Z_0$ and $V\subset \R^2$ of the origin and a surjective function $(\beta_1,\beta_2):\V\rightarrow V$ with $(\beta_1,\beta_2)(Z_0)=(0,0)$, such that the parameters $(\beta_1,\beta_2)$ completely describe the bifurcation diagram of $Z_0$ around its $\s$-polycycle $\Gamma$ given by Figure \ref{bif2fr1}.
\end{mtheorem}

Theorem \ref{formalbifdig_foldregular}  is equivalent to Theorem \ref{bifdig_foldregular} of Section \ref{2fold-fold} which contains the complete description of the bifurcation diagram presented in Figure \ref{bif2fr1}.

\begin{figure}[H]
	\centering
	\bigskip
	\begin{overpic}[width=13cm]{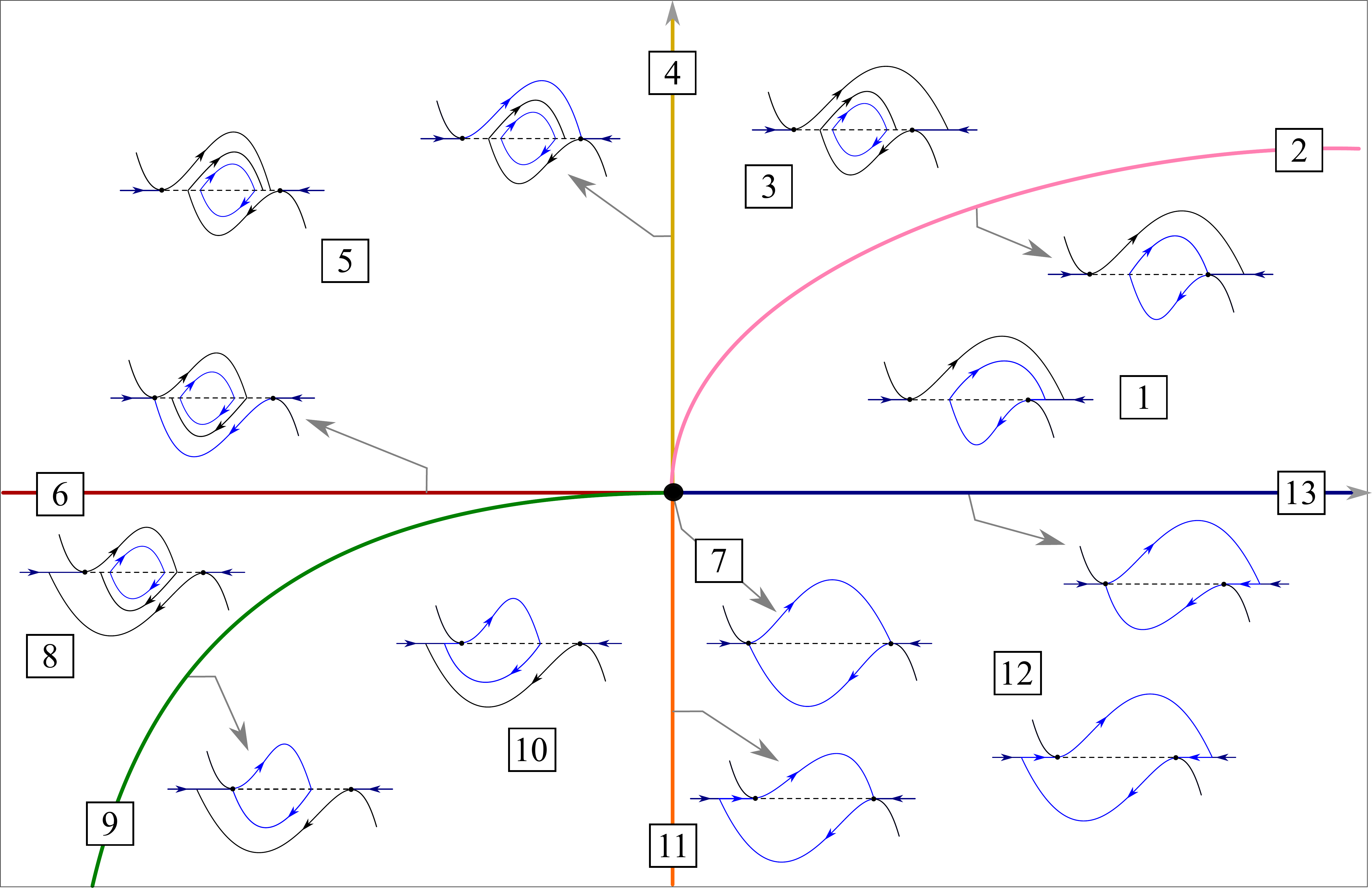}
		\put(97,26){{\footnotesize $\bg_1$}}
		\put(50,62){{\footnotesize $\bg_2$}}
		\put(85,54){{\footnotesize $\cg_1$}}
		\put(6,8){{\footnotesize $\cg_2$}}
	\end{overpic}
	\bigskip
	\caption{ Bifurcation diagram of $Z_0\in\Omega_{DRF}$ around $\Gamma_0$. In this case, $\cg_1$, $\cg_2$, the $\beta_1$-axis and the $\beta_2$-axis are codimension-one bifurcation curves. All the crossing limit cycles appearing in the diagram are hyperbolic.}	\label{bif2fr1}
\end{figure} 	

A practical model realizing such bifurcation diagram can be found in \cite[Example 2 ]{Wu22}.

\subsection{Structure of the paper}The paper is organized as follows. In Section \ref{approach_sec}, we develop the method of displacement functions which makes use of transition maps, mirror maps and displacement functions introduced in Sections \ref{trans-sec}, \ref{involution_sec} and \ref{disp_sec}, respectively. Section \ref{charac_sec} is devoted to state and prove Theorems \ref{charac1} and \ref{charac2}, which characterizes the transition maps and proves Theorem \ref{crossingthm}. The $\s$-polycycles containing only $\s$-singularities of regular-tangential type are analyzed in Section \ref{reg-tan_sec}. More specifically, in Section \ref{descripregtan} we characterize the crossing system for such a class of $\s$-polycycles. In Section \ref{polyn_sec}, we prove general properties of $\s$-polycycles containing a unique $\s$-singularity of regular-tangential type.  In Section \ref{cusp_sec}, we state and prove Theorem \ref{regcuspcycle}. Section \ref{polyn+_sec} is devoted to extend the properties described in Section \ref{polyn_sec} to a wider class of systems.

\section{Method of Displacement Functions}\label{approach_sec}

The aim of this section is to provide a systematic methodology for studying aspects of structural stability of $\s$-polycycles in $2D$ nonsmooth vector fields via displacement functions as well as to describe the bifurcations of these objects.

In what follows, given a $\s$-polycycle $\Gamma_0$ of $Z_0\in\Or$, we outline the method developed in this work for detecting all the crossing limit cycles with the same topological type of $\Gamma_0$ bifurcating from $\Gamma_0$. By ``the same topological type'' we understand the cycles which can be continuously deformed into $\Gamma_0$ inside a small annulus $\mathcal{A}$ around $\Gamma_0$. In general, our method regards in reducing the problem of finding crossing limit cycles to the study a system of nonlinear equations. 

Assuming that the $\s$-polycycle $\Gamma_0$ contains $k$ $\s$-singularities $p_{i}$, $i=1,\cdots, k+1$ ($p_1=p_{k+1}$), the totality of this section is devoted to construct, for each nonsmooth vector field $Z\in\Or$ near $Z_0$, a displacement function (see Definition \ref{disp_def}) $\Delta_{i}(Z):\sigma_{i}(Z)\times\sigma_{i+1}(Z)\to \R$ which measures the splitting of the connection between $p_i$ and $p_{i+1}$ through $\Gamma_0$, for $i=1,\cdots,k$ (see Figure \ref{dispfig}). This allow us to introduce, for each nonsmooth vector field $Z\in\Or$ near $Z_0$, the \textbf{crossing system}:
\begin{equation}\label{cross}
\left\{
\begin{array}{l}
\vspace{0.2cm}\Delta_{1}(Z)(x_{1},x_{2})=0;\\
\vspace{0.2cm}\Delta_{2}(Z)(x_{2},x_{3})=0;\\
\ \ \ \ \ \ \ \ \ \ \vdots\\
\vspace{0.2cm}\Delta_{k-1}(Z)(x_{k-1},x_{k})=0;\\
\vspace{0.2cm}\Delta_{k}(Z)(x_{k},x_{1})=0;\\
x_{i}\in\sigma_{i}(Z),\ i=1,\cdots,k,
\end{array}
\right.
\end{equation}
We anticipate that the displacement functions $\Delta_i$ in \eqref{cross} will be given via transition maps and mirror maps while the domains $\sigma_{i}(Z)$ will be a finite union of real intervals such that $\mathcal{A}\cap\s^c\subset \cup_{i=1}^{k} \sigma_{i}(Z_0)$.  We shall see that each solution $x(Z)=(x_{1}(Z),\cdots,x_{k}(Z))$ of \eqref{cross} will correspond to a closed orbit $\Gamma(Z)$ of $Z$ contained in $\mathcal{A}$ satisfying $x_i(Z)=\Gamma(Z)\cap\sigma_i(Z)$, $i=1,\cdots,k$. In addition, if $x(Z)$ is an isolated solution of  \eqref{cross} such that $x_{i}(Z)\in\textrm{int}(\sigma_{i}(Z))$ for each $i=1,\cdots,k$, then it corresponds to a crossing limit cycle of $Z$. On the other hand, if there exists $i\in\{1,\cdots,k\}$ such that $x_{i}(Z)\in\partial \sigma_{i}(Z)$ then this solution corresponds to a $\s$-polycycle. Reciprocally, if $\Gamma$ is a closed orbit of $Z$ in $\mathcal{A}$ and $x_i=\Gamma\cap\sigma_i(Z)$  for $i=1,\cdots,k,$ then $(x_1,\ldots,x_k)$ is a solution of \eqref{cross}. Therefore, system \eqref{cross} describes the whole crossing dynamics of $Z$ in $\mathcal{A}$.

\subsection{Transition Maps} \label{trans-sec}

In order to understand the behavior of the  nonsmooth vector fields near $Z_0$ in $\mathcal{A}$ we shall study how the crossing trajectories of $Z_{0}$ behave near the $\s$-singularities in $\Gamma_0$. With this purpose, we establish a precise definition for transition maps at points $p\in\s$.

We shall see that a transition map is defined for each component, $X$ and $Y$, of a nonsmooth vector field $Z=(X,Y)$. In light of this, we consider a smooth vector field $X_0\in\Xr$ on $M$ and we study the behavior of its trajectories passing through the codimension one manifold $\Sigma\subset M$ given in Subsection \ref{prem}.

Assume that $X_0$ satisfies the following set of hypotheses \textbf{(T)} at a point $p_0\in \s$:
\begin{itemize}
	\item [($T_1$)] $X_0(p_0)\neq(0,0)$;
	\item [($T_2$)] there exists $t_0\in\R$ such that $q_{0}=\p_{X_0}(t_0;p_{0})\in M^{\pm}$,
\end{itemize}
where $\p_{X_0}$ denotes the flow of $X_0$. 

Let $\tau\subset M^{\pm}$ be a local transversal section of $X_0$ at $q_{0}$. From the Implicit Function Theorem for Banach Spaces there exist neighborhoods $\U_0\subset\chi^r$ of $X_0$ and $V_0\subset M$ of $p_0$, $\e>0,$ and a unique smooth function $s: \U_0\times V_0\rightarrow (t_0-\e,t_0+\e)$ such that $s(X_0,p_{0})=t_0$ and $\p_{X}(s(X,p);p)\in \tau$ for every $(X,p)\in \U_0\times V_0.$ Then, we  define the \textbf{full transition map} of $X\in\U_0$ at $p_{0}$ as the map
\begin{equation*}
\begin{array}{lcll}
\overline{T^{X}_{p_{0}}}:&(\s\cap V_0)_{p_{0}}&\longrightarrow& \tau\\
& p&\longmapsto& \p_{X}(s(X,p);p),
\end{array}
\end{equation*}
where $(\s\cap V_0)_{p_{0}}$ is the connected component of $\s\cap V_0$ containing $p_{0}$.

Throughout this paper, when $p_{0}$ and $p_{1}$ belong to the same orbit of $X,$ $\arcf{p_{0}}{p_{1}}{X}$ will denote the \textbf{oriented arc-orbit} of $X$ with extrema $p_{0}$ and $p_{1}$, i. e. $\arcf{p_0}{p_1}{X}=\p_{X}(I;p_{0})$ where $I=[0,t_1]$, $p_{0}=\p_{X}(0,p_0),$ and  $p_{1}=\p_{X}(t_{1},p_0).$ We shall omit the index $X$ if there is no ambiguity. Since we are constructing transition maps for nonsmooth vector fields, it is only considered orbits of $X$ which are contained in either $\overline{M^{+}}$ or $\overline{M^{-}}$. So, the domain of the full transition map has to be restricted to the following set
$$\sigma_X=\left\{ p\in(\s\cap V_0)_{p_{0}};\ \arc{p\overline{T^{X}_{p_{0}}}(p)}\ \text{ is contained in }\overline{M^{\pm}}\right\}.$$
Accordingly, the \textbf{transition map} of $X$ at $p_{0}$ is defined as
$T_{p_{0}}^{X}:=\overline{T_{p_{0}}^{X}}\big|_{\sigma_X}.$ 

It is worth to notice that $p_0$ may not be contained in the domain $\sigma_X$ of the transition map $T^X_{p_0}$ (see Figure \ref{p0fig}). However, if $X$ is defined in $\overline{M^{\pm}}$ and $q_{0}\in M^{\pm}$ (recall that $q_0$ defines the local transversal section $\tau$), then $p_{0}\in\sigma_X$ provided that the arc-orbit $\arc{p_{0}q_{0}}$ of $X$ is contained in $M^{\pm}$. 

\begin{figure}[h!]
	\centering
	\bigskip
	\begin{overpic}[width=9cm]{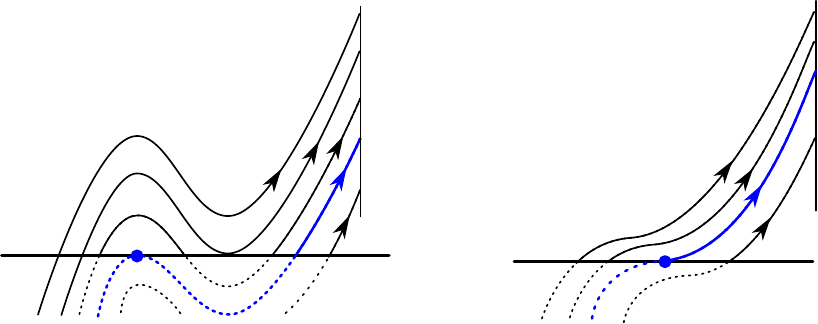}
	\put(45,37){{\footnotesize $\tau$}}\put(100.5,37){{\footnotesize $\tau$}}
	\put(45.5,4.5){{\footnotesize $\s$}}\put(98,4){{\footnotesize $\s$}}
	\put(15.5,5){{\footnotesize $p_0$}}\put(80.5,5){{\footnotesize $p_0$}}
	\put(45,21.5){{\footnotesize $q_0$}}\put(100.5,30.5){{\footnotesize $q_0$}}
	\end{overpic}
	\bigskip
	\caption{Unfolding of a transition map at a cusp point. Left: $p_0\notin\sigma_X$. Right: $p_0\in\sigma_X$.}\label{p0fig}	
\end{figure}

In Section \ref{charac_sec}, we characterize the full transition map $\overline{T^{X_0}_{p_0}}$ for vector fields $X_0\in\Xr$ having a $n$-multiplicity contact with $\s$ at $p_0$. Moreover, we describe how $\overline{T^X_{p_0}}$ behaves for $X$ in a small neighborhood of $X_0$ in $\Xr$.

\subsection{Mirror Maps}\label{involution_sec}

Assume that $X_0$ has a $2n$-multiplicity contact with $\s$ at $p_0$ for some $n\in\N$. We shall see that, for each $p\in\s$ near $p_0$, with $p\neq p_0$, there exists a time $t(p)$ such that $t(p_0)=0$ and $\p_{X_0}(t(p);p)\in\s$. Moreover, the flow of $X_0$ will define a germ of diffeomorphism at $p_0,$
\begin{equation*}\label{invol}
\begin{array}{lcll}
\rho:& (\s,p_0)&\longrightarrow&  (\s,p_0)\\
& p&\longmapsto& \p_{X_0}(t(p);p).
\end{array}
\end{equation*}
In this case, $\rho(p_0)=p_0$ and we say that $\rho$ is the \textbf{involution} associated with $X_0$ at $p_0$.

Through a local change of coordinates and a rescaling of time, we can assume that $p_0=(0,0)$, $h(x,y)=y$, and
\begin{equation}\label{expr2n}
X_0(x,y)=\left(\begin{array}{c}
1\\
\ell_0 x^{2n-1}+\er(x^{2n},y)
\end{array}\right),
\end{equation}
where $\ell_0>0$. In this case, for each $p\in\s$ the orbit connecting $p$ and $\rho(p)$ will be contained in $\overline{M^-}$ (see Figure \ref{invfig}).
\begin{figure}[h!]
	\centering
	\begin{overpic}[width=7cm]{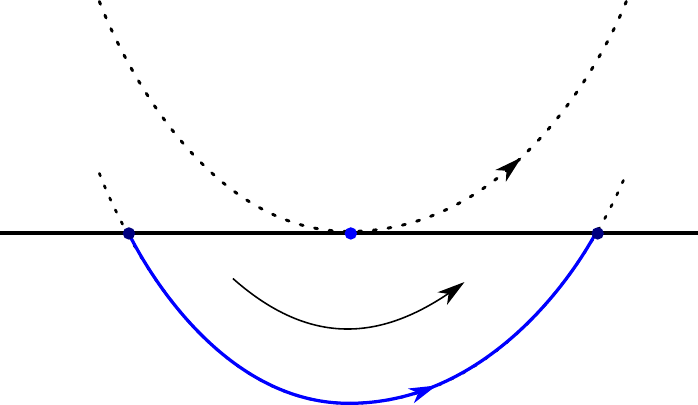}
			\put(99,20){{\footnotesize $\s$}}\put(47,20){{\footnotesize $p_0$}}		\put(17,20){{\footnotesize $p$}}		\put(85,20){{\footnotesize $\rho(p)$}}
	\end{overpic}
	\caption{Involution $\rho$ of $X_0$ at $p_0$.}\label{invfig}	
\end{figure}

Notice that $\p_{X_0}(t(x);(x,0))\in\s$ if, and only if, $\pi_2\circ \p_{X_0}(t(x);(x,0))=0$. In this case, $\rho(x)=x+t(x)$. Expanding $\p_{X_0}$ around $t=0$ we get
\begin{equation}\label{segundacoord}\pi_2\circ \p_{X_0}(t;(x,y))= y+\sum_{i=1}^{2n}\dfrac{X_0^{i}h(x,y)}{i!}t^i +\er(t^{2n+1}).\end{equation}

From \eqref{expr2n}, we  see that 
\begin{equation}\label{exprlie}X_0^i h(x,y)=\ell_0 \dfrac{(2n-1)!}{(2n-i)!}x^{2n-i}+\er(x^{2n-i+1},y).\end{equation}

Now, define the map
\begin{equation*}
S(s,x)=\dfrac{2n}{\ell_0 x^{2n}}\pi_2\circ\p_{X_0}(sx;(x,0)).
\end{equation*}
Notice that, if  $S(s,x)=0$, $x\neq 0$, and $s\neq 0$, then $\pi_2\circ \p_{X_0}(sx,(x,0))=0$. From \eqref{segundacoord} and \eqref{exprlie} we obtain that
$$S(s,x)=(1+s)^{2n}-1+\er(x).$$

Since $S(-2,0)=0$ and $\partial_s S(-2,0)=-2n>0$, it follows from the Implicit Function Theorem that there exists $s(x)=-2+\er(x)$ such that $S(s(x),x)=0$. From the definition of $S$, for $t(x)=xs(x)$, we have that $\p_{X_0}(t(x);(x,0))\in\s$, and then the involution $\rho$ is straightly defined. 

	From the construction above, it follows that there exists a compact neighborhood $V_0\subset M$ of $p_0$ such that the involution $\rho:(\s\cap V_0)_{p_0}\rightarrow (\s\cap V_0)_{p_0}$ is well defined and characterized as \begin{equation}\label{rho}
	\rho(x)=x+t(x)=-x+\er(x^2).
	\end{equation}
	
Now, we show that the a vector field $X\in\Xr$ sufficiently near $X_0$ still induces an involution in $(\s\cap V_0)_{p_0}$ but a finite set of points. In what follows  we also characterize it. For simplicity, identify $(\s\cap V_0)_{p_0}$ with the interval $[-\e_0,\e_0]$ and $p_0$ with $0$.

From definition of $\rho$, there exists $\e_0^*>0$ such that the intervals $I=[-\e_0,-\e_0/2]$ and $\rho(I)=[\e_0^*,\e_0]$ are connected by orbits of $X_0$ contained in $M^{-}$, and $X_0$ is transverse to $\s$ at every point of $I\cup\rho(I)$. Since $I$ is compact, given $\e>0$, there exists a small neighborhood $\U_1\subset\Xr$ of $X_0$ such that, for each $X\in\U_1$, there exist $\e_X^*,\e_X>0$ satisfying
\begin{enumerate}[i)]
\item $|\e_X^*-\e_0^*|,|\e_X-\e_0|<\e$;
\item each point of $I$ is connected to a unique point of $[\e_X^*,\e_X]$ through an orbit of $X$ contained in $M^-$;
\item $X$ is transverse to $\s$ at each point of $I\cup[\e_X^*,\e_X]$. 
\end{enumerate}

Notice that $[-\e_0/2,\e_X^*]$ and the orbit connecting $-\e_0/2$ and $\e_X^*$ give rise to a compact region $K^-$ of $M^-$ such that $X$ is regular at every point of $K^-$ (see Figure \ref{regKfig}). Thus, each orbit of $X$ entering in $K^-$ must leave it through another point. It allows us to see that $Xh$ has at least one zero in $(-\e_0/2,\e_X^*)$ and it has to be an even multiplicity contact of $X$ with $\s$ having the same concavity of $p_0$. Throughout this section, an even multiplicity contact of a vector field $X$ with $\s$ having the same concavity of $p_0$ will be called invisible, otherwise it will be called visible.

\begin{figure}[h!]
	\centering
	\begin{overpic}[width=7cm]{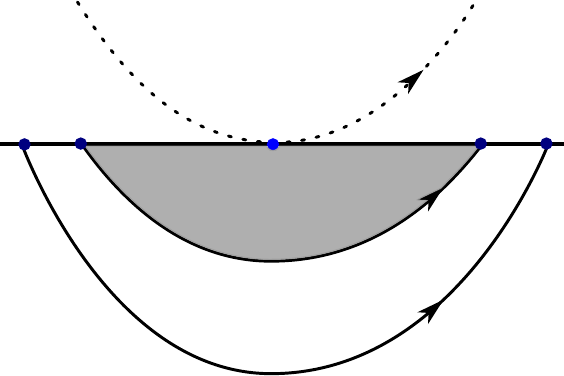}
		\put(99,37){{\footnotesize $\s$}}
		\put(47,42){{\footnotesize $0$}}
		\put(9.5,44){{\footnotesize $-\frac{\varepsilon_0}{2}$}}	\put(0,44){{\footnotesize $-\varepsilon_0$}}
		\put(97,44){{\footnotesize $\varepsilon_X$}}		\put(82,44){{\footnotesize $\varepsilon_X^*$}}	
		\put(47,30){{\footnotesize $K^-$}}	
	\end{overpic}
	\caption{Compact region $K^-$ for $X\in\mathcal{U}_1$.}\label{regKfig}	
\end{figure}

Since $X_0h(x)=\ell_0 x^{2n-1}+\er(x^{2n})$, there exist a neighborhood $\U_0\subset \U_1$ of $X_0$, $\Cr$ functions $a_i:\U_0\rightarrow (-\e,\e)$ such that $a_i(X_0)=0$, $i=0,\cdots, 2n-2$,  and a positive function $\ell:\U_0\rightarrow (\ell_0-\e,\ell_0+\e)$ with $\ell(X_0)=\ell_0$ satisfying
$$Xh(x)=P_X(x)+\er(x^{2n}),$$
where $P_X(x)=\sum_{i=0}^{2n-2}a_i(X)x^{i}+\ell(X)x^{2n-1}$. Furthermore, we can take the initial neighborhood $V_0$ sufficiently small such that the zeroes of $Xh$ in $[-\e_0,\e_0]$ are controlled by the polynomial $P_X$. Hence, it follows that there exist exactly $N_X$ points $r_i\in(-\e_0/2,\e_X^*)$, with $1\leq N_X\leq 2n-1$, such that $X$ has a $n_i$-multiplicity contact with $\s$ at $r_i$ for some $n_i\geq 2$, $i=1,\cdots, N_X$. In this case, $n_i\leq 2n$. Accordingly, let $\mathcal{E}_X$ be the finite subset of $(-\e_0/2,\e_X^*)$ containing
\begin{enumerate}[i)]
	\item $r_i$, $i=1,\cdots, N_X$, such that either $n_i$ is odd or $n_i$ is even and $X$ has a visible contact with $\s$ at $r_i$;
	\item $p\in (-\e_0/2,\e_X^*)$, such that $p$ and $r_i$ belong to the same orbit of $X$, for some $i=1,\cdots, N_X$, and the arc-orbit of $X$ with extrema $p$ and $r_i$ is contained in $\overline{M^-}$ (see Figure \ref{exfig}).
\end{enumerate}

\begin{figure}[h!]
	\centering
	\bigskip
	\begin{overpic}[width=7cm]{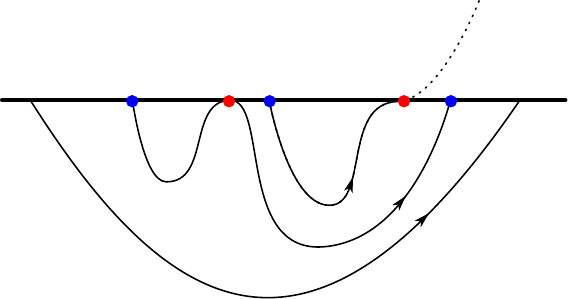}
		\put(96,30){{\footnotesize $\s$}}
		\put(22,37){{\footnotesize $p_1$}}
		\put(38,37){{\footnotesize $p_2$}}
		\put(45,37){{\footnotesize $p_3$}}
		\put(69,37){{\footnotesize $p_4$}}
		\put(78,37){{\footnotesize $p_5$}}
	\end{overpic}
	\bigskip
	\caption{Example of some points in $\mathcal{E}_X$, $p_1$, $p_3$, $p_5$ satisfy condition $(ii)$ and $p_2,p_4$ satisfy condition $(i)$.}\label{exfig}	
\end{figure}

If $r_i\in (-\e_0,\e_X)\setminus \mathcal{E}_X$, for some $i=1,\cdots, N_X$, then $X$ has an invisible even multiplicity contact with $\s$ at $r_i$. So, applying the same process above we find $\e_i^-,\e_i^+>0$ sufficiently small and an involution $\rho_X^i: (r_i-\e_i^-, r_i+\e_i^+)\rightarrow (r_i-\e_i^-, r_i+\e_i^+)$ induced by the flow of $X$ at $r_i$. In this case, $\rho_X^i$ is a diffeomorphism with a unique fixed point at $r_i$, and
\begin{equation}\label{rhoi}
\rho_X^i(x)=r_i-(x-r_i)+\er((x-r_i)^2).
\end{equation}

Now, if $p\in [-\e_0,\e_X]\setminus (\mathcal{E}_X\cup\{r_1,\cdots,r_{N_X}\})$, then $X$ is transverse to $\s$ at $p$ and there exists a unique point $p^*\in (-\e_0/2,\e_X^*)\setminus (\mathcal{E}_X\cup\{r_1,\cdots,r_{N_X}\})$ such that $X$ is transverse to $\s$ at $p^*$, $p$ and $p^*$ belong to the same orbit of $X$, and the arc-orbit of $X$ with extrema $p$ and $p^*$ is contained in $\overline{M^-}$. It allows us to extend the involutions $\rho_X^i$ to an involution $$\overline{\rho_X}: [-\e_0,\e_X]\setminus\mathcal{E}_X\rightarrow [-\e_0,\e_X]\setminus\mathcal{E}_X,$$
induced by the flow of $X$. We refer $\overline{\rho_X}$ as the \textbf{involution} of $X$ at $p_0$. 

Notice that $\overline{\rho_X}$ is a diffeomorphism for which $r_i$, $i=1,\cdots, N_X,$ are its only fixed points. Moreover, these points are invisible ever multiplicity contact of $X$ with $\s$ and the expansion of $\overline{\rho_X}$ at these points is given by \eqref{rhoi}. Thus $\overline{\rho_X}$ is completely characterized and $\overline{\rho_{X_0}}=\rho,$ where $\rho$ is given by \eqref{rho}.

We aim to use these involutions for detecting closed connections of nonsmooth vector fields. Thus, in order to avoid pseudo-connections (see \cite{GTS} for more details), we restrict $\overline{\rho_X}$ to the set  
$$\sigma_X^{inv}=\left\{ p\in[-\e_0,\e_X]\setminus\mathcal{E}_X;\ Xh(p)\leq 0\right\}.$$
Accordingly, the restriction $\rho_{X}:=\overline{\rho_X}\big|_{\sigma_X^{inv}}$ is referred as  \textbf{mirror map} of $X$ at $p_{0}$. The condition $Xh(p)\leq 0$ on the domain $\sigma_X^{inv}$ comes from the initial assumptions which imply that the orbit connecting $p$ and $\rho_X(p)$ is contained in $\overline{M^-}$ for every $p\in\sigma_X^{inv}$. When considering nonsmooth systems these orbits could be contained in $\overline{M^+}.$ In this case, the condition on $\sigma_X^{inv}$ is changed to $Xh(p)\geq 0.$

\subsection{Displacement Functions}\label{disp_sec}

Now, we are able to define the displacement functions associated with a $\s$-polycycle $\Gamma_0$ of $Z_0=(X_0,Y_0)$. Assume that $\Gamma_0$ has $k$ tangential singularities $p_{i}$ of multiplicity $n_{i}\in\N$, $1\leq i\leq k$. Let $\gamma_{i}$ be the regular orbit of $Z_{0}$ connecting $p_{i}$ to $p_{i+1}$, $i=1,\ldots, k-1$,  $\gamma_{k}$ be the regular orbit of $Z_{0}$  connecting $p_{k}$ and $p_{1}$, and consider sufficiently small neighborhoods $U_i$ of $p_i$, $1\leq i\leq k$.  Notice that for each $p_i$, $i\in\{1,\ldots, k\},$ one of the following statements hold:

\begin{enumerate}
	\item[(\textbf{E})] $\Gamma_0\cap U_i\setminus\{p_{i}\}$ is contained in either $M^+$ or $M^-$;
\item[(\textbf{O})]	$\Gamma_0\cap U_i\setminus\{p_{i}\}$ has one connected component in $M^+$ and the other one in $M^-$. 
\end{enumerate}

Suppose that {\bf (O)} holds for $p_{i}$ and assume, without loss of generality, that $W^{u}(p_i)\cap\Gamma_0\cap U_i\subset M^+$ and $W^{s}(p_i)\cap\Gamma_0\cap U_i\subset M^-$. Let $\tau_{i}^{u}$ and $\tau_{i}^{s}$ be transversal sections of $X_0$ and $Y_0$ at the points $q_{i}^{u}\in W_{+}^{u}(p_i)$ and $q_{i}^{s}\in W_{-}^{s}(p_i)$, which are contained in $U_{i}$, respectively. From the construction performed in Section \ref{trans-sec}  there exist  transition maps of $X_0$ and $Y_0$ at $p_0,$ $T^u_{i}:\sigma_i(X_0)\rightarrow \tau_i^u$ and $T^s_{i}:\sigma_i(Y_0)\rightarrow \tau_i^s$, respectively.

Now, suppose that {\bf (E)} holds for $p_{i}$ and assume, without loss of generality, that $\Gamma_0\cap U_i\subset \overline{M^+}$. Let $\tau_{i}^{s}$ and $\tau_{i}^{u}$ be transversal sections of $X_0$ at the points $q_{i}^{s}\in W_{+}^{s}(p_i)$ and $q_{i}^{u}\in W_{+}^{u}(p_i)$, which are contained in $U_{i}$, respectively. In this case, we have two distinguished situations:

\textbf{(I)} If $\s\cap U_i\setminus{\{p_i\}}$ has one connected component in the sliding region of $Z_0$, then let $\sigma_i^\pitchfork(X_0)$ be the restriction to $\overline{M^+}$ of a local transversal section of $X_0$ at $p_i$. Clearly, the flow of $X_0$ induces maps $T^u_{i}:\sigma_i^\pitchfork(X_0)\rightarrow \tau_i^u$ and $T^s_{i}:\sigma_i^\pitchfork(X_0)\rightarrow \tau_i^s$, which are restrictions of diffeomorphisms.

	\textbf{(II)} If $\s\cap U_i \setminus{\{p_i\}}\subset \Sigma^c$, then besides the maps $T^u_{i}:\sigma_i^\pitchfork(X_0)\rightarrow \tau_i^u$ and $T^s_{i}:\sigma_i^\pitchfork(X_0)\rightarrow \tau_i^s,$ induced by the flow of $X_0$, we can also define other maps in the following way: first, notice that this situation is only possible when $Y_0$ has an invisible even multiplicity contact with $\s$ at $p_i$, and thus, we consider the mirror map $\rho_i:\sigma_i^{inv}(Y_0)\rightarrow \s\cap U_i$ of $Y_0$ at $p_i$ (see Section \ref{involution_sec}). Now, let $T^{X_0}_{-}: \sigma_i^-(X_0)\rightarrow \tau_i^s $ and $T^{X_0}_{+}: \sigma_i^+(X_0)\rightarrow \tau_i^u$ be the transition maps of $X_0$ at $p_0$ with respect to the transversal sections $\tau_i^s$ and $\tau_i^u$, respectively. Now, define the section
	\begin{equation*}\sigma_i^t(Z_0)=\rho_i^{-1}(\sigma_i^+(X_0)\cap \rho_i(\sigma_i^{inv}(Y_0))),\end{equation*}
and the maps
	\begin{equation}\label{Tsui}
	\begin{array}{l}
	T^s_i:\sigma_i^-(X_0)\rightarrow \tau_i^s,\quad T^s_i=T^{X_0}_{-},\vspace{0.2cm}\\
	T^u_i:\sigma_i^t(Z_0)\rightarrow \tau_i^u,\quad T^u_i=T^{X_0}_{+}\circ\rho_i.
	\end{array}
	\end{equation}
 Thus, in this case, we have maps $T_i^{u,s}$ induced by crossing orbits of $Z_0$.

Summarizing, if $p_i$ has type \textbf{(O)}, \textbf{(E-I)} or \textbf{(E-II)}, then we define $\sigma_i(Z_0)$ as $\sigma_i(X_0)\cap \sigma_i(Y_0)$, $\sigma_i^{\pitchfork}(X_0)$ or $\sigma_i^{\pitchfork}(X_0)\cup (\sigma_i^t(Z_0)\cap \sigma_i^-(X_0))$, respectively. So, in any case, we construct maps $T_i^{u,s}:\sigma_i(Z_0)\rightarrow \tau_i^{u,s}$ induced by crossing orbits of $Z_0$. We refer the maps $T_i^{u,s}$ as \textbf{transfer functions} (see Figure \ref{transferfig}).

\begin{figure}[h!]
	\centering
	\bigskip
	\begin{overpic}[width=12cm]{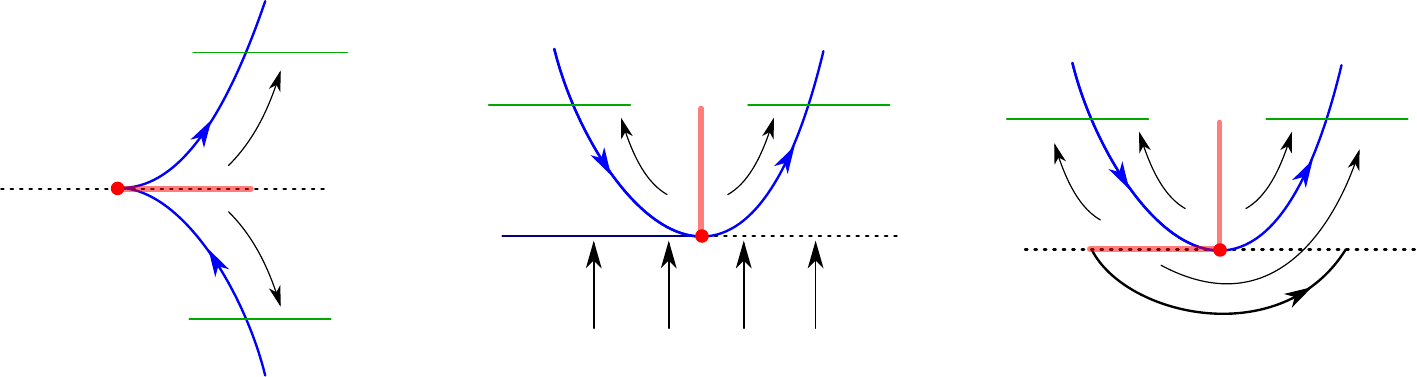}
		\put(0,11){{\footnotesize $\s$}}
		\put(7,11){{\footnotesize $p_i$}}
		\put(20,17){{\footnotesize $T_i^u$}}
		\put(20,9){{\footnotesize $T_i^s$}}		
		\put(25,23){{\footnotesize $\tau_i^u$}}
\put(24,4){{\footnotesize $\tau_i^s$}}		
\put(15,-2){{\footnotesize $(a)$}}	
\put(49,-2){{\footnotesize $(b)$}}	
\put(85,-2){{\footnotesize $(c)$}}		
		\put(46,15){{\footnotesize $T_i^s$}}	
				\put(50.5,15){{\footnotesize $T_i^u$}}	
		\put(63,20){{\footnotesize $\tau_i^u$}}
\put(32.5,20){{\footnotesize $\tau_i^s$}}		
		\put(49,8){{\footnotesize $p_i$}}	
			\put(85.5,7){{\footnotesize $p_i$}}	
					\put(100,19){{\footnotesize $\tau_i^u$}}
			\put(69,19){{\footnotesize $\tau_i^s$}}				
		\put(72,12){{\footnotesize $T_i^s$}}	
\put(96,12){{\footnotesize $T_i^u$}}						
	\end{overpic}
	\bigskip
	\caption{Transfer functions of types (a)-\textbf{(O)}, (b)-\textbf{(E-I)} and (c)-\textbf{(E-II)}.}\label{transferfig}	
\end{figure}

Now, the regular orbit $\gamma_i$ connecting $p_{i}$ to $p_{i+1}$, $i=1,\cdots,k$, induces a diffeomorphism $D_{i}: \tau_{i}^{u}\rightarrow \tau_{i+1}^{s}$ such that $D_i(p_i)=p_{i+1}$.

For a sufficiently small neighborhood $\V\subset \Or$ of $Z_0$ in $\Or$, we  see that all the maps used to construct the transfer functions  $T_i^{u,s}$ above are also defined for each $Z\in\V$ (see Sections \ref{trans-sec} and \ref{involution_sec} ). Thus, for each $Z\in\V,$ the transfer functions $T_i^{u,s}(Z): \sigma_i(Z)\rightarrow \tau_i^{u,s}$ and the diffeomorphisms $D_{i}(Z): \tau_{i}^{u}\rightarrow \tau_{i+1}^{s}$ can be constructed in the same way as described above. In particular, the domain $\sigma_i^{\pitchfork}(X_0)$ is perturbed into
\[\sigma_i^{\pitchfork}(X)=\{p\in\sigma_i^{\pitchfork}(X_0);\ \arcf{p}{T^u_i(Z)(p)}{X} \textrm{ and }\arcf{T_i^s(Z)(p)}{p}{X} \textrm{ are contained in } \overline{M^+}\}.\]

We now relate all these information through displacement functions. 

\begin{definition}\label{disp_def}
	The \textbf{$i$-th displacement function} of $Z$ is defined as
	\begin{equation*}
	\begin{array}{lcll}
	\Delta_{i}(Z):&\sigma_{i}(Z)\times\sigma_{i+1}(Z)&\longrightarrow& \R\\
	& (x_{i},x_{i+1})&\longmapsto& \phi\circ T_{i}^{u}(Z)(x_{i})- \phi\circ D_{i}^{-1}\circ T_{i+1}^{s}(Z)(x_{i+1}),
	\end{array}
	\end{equation*}	
	where $\phi:\tau_{i}^{u}\rightarrow \R$ is a parameterization of $\tau_{i}^{u}$.
\end{definition}

	Clearly, the zeroes of the $i-$th displacement function of $Z$ does not depend on the parameterization of $\tau_{i}$. It is straightforward to see that two points, $x_{i}\in\sigma_{i}(Z)$ and $x_{i+1}\in\sigma_{i+1}(Z)$, are connected through an orbit of $Z$ if, and only if, $\Delta_{i}(Z)(x_{i},x_{i+1})=0$.

\begin{figure}[h!]
	\centering
	\bigskip
	\begin{overpic}[width=10cm]{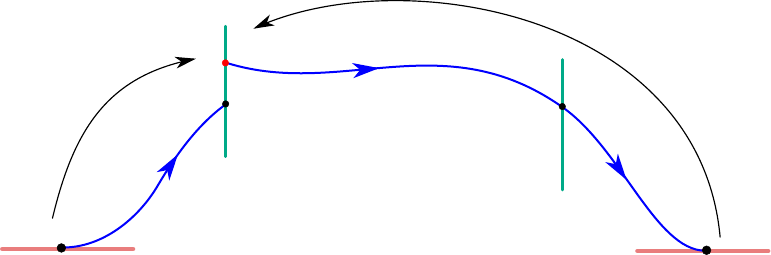}
	
		\put(8,-2){{\footnotesize $p_i$}}
		\put(91,-2){{\footnotesize $p_{i+1}$}}
		\put(5,20){{\footnotesize $T_i^u(Z)$}}	
			\put(87,20){{\footnotesize $D_i^{-1}\circ T_i^s(Z)$}}	
		\put(30,14){{\footnotesize $\tau_i^u$}}	
	\put(69,10){{\footnotesize $\tau_i^s$}}	
		\put(30,21.5){{\tiny $\Delta_i(Z)$}}		
	\end{overpic}
	\bigskip
	\caption{Construction of the $i-$th displacement function of $Z\in\V$.   }\label{dispfig}	
\end{figure}

\begin{remark}
		We emphasize that the construction of displacement functions as in Definition \eqref{disp_def} allows us to describe the complete bifurcation diagrams of a vector field in $\Or$ around many different types of $\s$-polycycles, in particular the ones analyzed later on in this paper. We highlight that in all the cases all the bifurcating crossing limit cycles with the same topological type of $\Gamma_0$ are detected by this method. However, there exist tangential singularities which admit bifurcation of global connections in their local unfoldings, for instance the cusp-cusp singularity. In these cases, such global connections would not be detected by our method for $\s$-polycycles through these singularities.	
		\end{remark}

\section{Characterization of Transition Maps}\label{charac_sec}

In this section we characterize the transition maps of $Z_0=(X_0,Y_0)$ at $p\in\s$ and we also study how they typically change for unfoldings of $Z_0$. 

Firstly, notice that if $X\in\Xr$ is transversal to $\s=h^{-1}(0)$ at $p$, then the transition map $T^X_{p}|_{\sigma}$ is a diffeomorphism at $p$ and $\sigma$ is an open set of $\s$ containing $p$. 

Now, assume that $X\in\Xr$ has a $n-$multiplicity contact  with $\s$ at $p$. Consider coordinates $(x,y)$ at $p$ (i.e. $x(p)=y(p)=0$) such that $h(x,y)=y$ and write $X=(X_{1},X_{2})$ in this coordinate system. 
In this case $X_{1}(0,0)\neq 0$, and thus $X_{1}(x,y)\neq 0$, for every $(x,y)$ in some neighborhood $U$ of the origin. By performing a time rescaling, 
 we obtain that $X(x,y)$ and $\widetilde{X}(x,y)=(\sgn(X_{1}(0,0)), f(x,y))$, with $f(x,y)=X_{2}(x,y)/|X_{1}(x,y)|$, have the same integral curves in $U$.  It is easy to see that $Xh(x,y)=|X_{1}(x,y)|\widetilde{X}h(x,y).$
In general, $X^{i}h(0,0)=0$ if, and only if, $\widetilde{X}^{i}h(0,0)=0$. Moreover, one can prove that $X^{i}h(0,0)$ and $\widetilde{X}^{i}h(0,0)$ have the same sign. In what follows, without loss of generality, we take $X(x,y)=(\dg, f(x,y))$, with $\dg=\pm 1$.

\begin{lemma}\label{lemma_Xh}
	Assume that $X=(\dg,f(x,y))$, with $\dg=\pm1$, has a $n$-multiplicity contact with $\s$ at $(0,0)$, i.e. $X^{i}h(p)=0$, $i=0,1,\ldots,n-1$, and $X^{n}h(p)\neq 0$. Then:
	\begin{itemize}
	\item[(a)]
	$
	\dfrac{\partial^{i-1}f}{\partial x^{i-1}}(0,0)=0,\,\, \text{for}\,\, i=1,2,\ldots, n-1,\,\, \text{and}\,\, \dfrac{\partial^{n-1}f}{\partial x^{n-1}}(0,0)\neq 0.
	$
	\bigskip
\item [(b)]$Xh(x,0)=\ag x^{n-1}+\er(x^{n}),$ 
	where $\sgn(\ag)=\dg^{n-1}\sgn(X^{n}h(0,0))$.
	\end{itemize}
\end{lemma}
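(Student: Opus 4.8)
The plan is to compute the higher order Lie derivatives $X^i h$ directly and recursively, exploiting the simple form $X=(\delta, f(x,y))$ with $\delta=\pm1$ and $h(x,y)=y$. Since $h(x,y)=y$, we have $Xh(x,y)=\langle X,\nabla h\rangle = f(x,y)$. Then $X^2 h = X(f) = \delta\,\partial_x f + f\,\partial_y f$, and in general $X^{i}h$ is a polynomial differential expression in $f$ and its partial derivatives. The key observation is that when we evaluate along $\Sigma$, i.e. at $y=0$, and use the contact hypothesis inductively, all the ``$\partial_y$'' terms will be killed by lower-order vanishing, so that $X^{i}h(0,0)$ reduces to $\delta^{i-1}\partial_x^{i-1}f(0,0)$ modulo terms that vanish.

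For part (a), I would argue by induction on $i$. The base case $i=1$ is $Xh(0,0)=f(0,0)=0$ (contact of order $n\ge 2$, or trivially if $n\geq 1$ we need $f(0,0)=0$, which holds since $X^1h(p)=0$). For the inductive step, suppose $f(0,0)=\partial_x f(0,0)=\cdots=\partial_x^{i-2}f(0,0)=0$; I claim $X^i h(0,0)=\delta^{i-1}\partial_x^{i-1}f(0,0)$. The point is that $X^i h = X(X^{i-1}h)$, and $X^{i-1}h$ as a function of $(x,y)$ is a sum of $\delta^{i-2}\partial_x^{i-2}f$ plus terms each carrying at least one factor among $f,\partial_x f,\dots,\partial_x^{i-3}f$ (i.e.\ terms that vanish to order $\ge 1$ at the origin together with their needed derivatives). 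Applying $X=\delta\partial_x+f\partial_y$ and evaluating at $(0,0)$: the $f\partial_y$ part always produces a factor $f(0,0)=0$, while $\delta\partial_x$ applied to the ``junk'' terms still leaves a vanishing factor by the product rule and the induction hypothesis. Hence $X^i h(0,0)=\delta\cdot\delta^{i-2}\partial_x^{i-1}f(0,0)=\delta^{i-1}\partial_x^{i-1}f(0,0)$. Since $X^ih(0,0)=0$ for $i\le n-1$ and $\ne 0$ for $i=n$, and $\delta^{i-1}=\pm1\ne 0$, this gives exactly $\partial_x^{i-1}f(0,0)=0$ for $i=1,\dots,n-1$ and $\partial_x^{n-1}f(0,0)\ne 0$, which is (a). I would likely formalize the ``junk terms'' bookkeeping by a clean lemma: if $g(x,y)$ vanishes at the origin together with $\partial_x g,\dots,\partial_x^{m-1}g$, then so does $(f\partial_y g)$ provided $f(0,0)=0$, and $\delta\partial_x g$ vanishes with its first $m-2$ $x$-derivatives; tracking this through the recursion is the main (though routine) obstacle.

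For part (b), I would Taylor-expand $f(x,0)$ in $x$: by part (a), $f(x,0)=\frac{1}{(n-1)!}\partial_x^{n-1}f(0,0)\,x^{n-1}+\mathcal{O}(x^n)$, so $Xh(x,0)=f(x,0)=\alpha x^{n-1}+\mathcal{O}(x^n)$ with $\alpha=\frac{1}{(n-1)!}\partial_x^{n-1}f(0,0)\ne0$. It remains to identify $\sgn(\alpha)$. From the computation in part (a) we have $X^nh(0,0)=\delta^{n-1}\partial_x^{n-1}f(0,0)=\delta^{n-1}(n-1)!\,\alpha$, hence $\sgn(\alpha)=\sgn(\delta^{n-1})\sgn(X^nh(0,0))=\delta^{n-1}\sgn(X^nh(0,0))$ since $(n-1)!>0$ and $\delta^{n-1}=\pm1$. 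This is precisely the claimed sign relation, completing the proof. The only genuinely delicate point is making the inductive claim about which monomials appear in $X^{i-1}h$ precise enough that the evaluation-at-origin argument is airtight; everything else is direct differentiation and Taylor expansion.
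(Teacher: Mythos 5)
Your proposal is correct and follows essentially the same route as the paper: establish the identity $X^{i}h(0,0)=\dg^{i-1}\,\partial_x^{i-1}f(0,0)$ (which the paper states outright and you justify more carefully by induction, using $f(0,0)=0$ to kill the $f\,\partial_y$ contributions and the lower-order contact conditions to kill the remaining mixed terms), and then Taylor-expand $f(x,0)$ to get (b). The extra bookkeeping you supply for the inductive step is a legitimate tightening of the paper's one-line assertion, but it is the same argument.
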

\begin{proof}
	Firstly,  the statement (a) follows by noticing that $0=Xh(0,0)=f(0,0)$ and 
	$$X^{i}h(0,0)=\dg^{i-1} \dfrac{\partial^{i-1} f}{\partial x^{i-1}}(0,0).$$
	Now, since $Xh(x,0)=\langle X(x,0),(0,1)\rangle=f(x,0)$,  expanding $Xh(x,0)$ in Taylor series around $x=0$, we obtain that
	$$Xh(x,0)= \dfrac{\partial^{n-1} f}{\partial x^{n-1}}(0,0)x^{n-1}+\er(x^{n}).$$
	Hence, the statement (b) follows by taking  $\ag=\dg^{n-1}X^{n}h(0,0)$.	 
\end{proof}

From Lemma \ref{lemma_Xh} it follows that $X$ is transversal to $\s$ for every $(x,0)\in V\cap \s\setminus\{(0,0)\}$, where $V$ is a small neighborhood of the origin $O$.
Let  $X$ be defined in $\overline{M^{\pm}}$ and assume that 
\begin{enumerate}\item[\textbf{(A)}]either the oriented arc-orbit $\arc{O q_0}|_X$ or $\arc{q_0O}|_X$  is contained in $M^{\pm}$.\end{enumerate} In the first case $q_0=\p_{X}(T_0;0,0)\neq(0,0)$, and in the second one $q_0=\p_{X}(-T_0;0,0)\neq(0,0)$, for some $T_0>0$. 

Let $q_0=(x_0,y_0)$. Since $\pi_{1}(X(q_0))=\dg\neq 0$, it follows that \begin{equation}\label{tau}\tau=\{(x_{0},y);\ y\in (y_{0}-\e,y_{0}+\e)\}\end{equation} is a transversal section of $X$ at $q_{0}$, for $\e$ sufficiently small. Take $\e>0$ such that $\tau \subset V\cap   M^{\pm}$. Therefore, the full transition map of $X$ at $(0,0)$ is  $T_{X}: (V\cap\s)_{0} \rightarrow \tau$ given by
\begin{equation*}
T_{X}(x,0)=(x_{0},\pi_{2}(\p_{X}(\dg x_{0} - \dg x;x,0))).
\end{equation*}
Now, we use Lemma \ref{lemma_Xh} to determine the domain $\sigma$ of the transition map of $X$ at $(0,0)$.

\begin{corollary}\label{charac_sigma}
	Assume that $X$ has a $n$-multiplicity contact with $\s$ at $(0,0)$. Then, the following statements hold:
	\begin{enumerate}[i)]
		\item
		if $n$ is odd, then $\sigma=(-\e,\e)\times\{0\}$, for $\e>0$ sufficiently small;
		\item
		 if $n$ is even, then $\sigma=I\times\{0\}$, where $I$ is either $[0, \e)$ or $(- \e,0]$, for $\e>0$ sufficiently small.	\end{enumerate}
\end{corollary}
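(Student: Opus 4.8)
\textbf{Proof strategy for Corollary \ref{charac_sigma}.}

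The plan is to unwind the definition of the domain $\sigma=\sigma_X$ of the transition map, namely $\sigma_X=\{(x,0)\in(\s\cap V)_0:\ \arc{(x,0)\,T_X(x,0)}\subset\overline{M^\pm}\}$, and to decide, for $x$ near $0$, whether the arc-orbit of $X$ joining $(x,0)$ to its image point $T_X(x,0)\in\tau$ stays on the prescribed side of $\s$. The whole question is local and reduces to understanding the geometry of the single orbit of $X$ through the contact point $(0,0)$, since by Lemma \ref{lemma_Xh} every other point of $(\s\cap V)_0$ is a transversal crossing, so the side of $\s$ occupied by a short arc-orbit changes only when the orbit is tangent to $\s$, i.e.\ only at $(0,0)$.

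First I would reduce, as the text already does, to the normal form $X=(\dg,f(x,y))$ with $\dg=\pm1$, and invoke Lemma \ref{lemma_Xh}(b), which gives $Xh(x,0)=\ag x^{n-1}+\er(x^n)$ with $\ag\neq 0$. The sign of $Xh$ along $\s$ near $0$ is therefore governed by the sign of $\ag x^{n-1}$. Next I would split into the two parities:
\begin{itemize}
\item[(i)] If $n$ is odd, then $n-1$ is even, so $Xh(x,0)$ has a \emph{constant} sign for all small $x\neq 0$; hence $X$ points to the same side of $\s$ on both sides of the contact point, and the orbit through $(0,0)$ touches $\s$ from one side only (a generalized fold/cusp of odd tangency type, which in the flow box looks like $y=c\,x^n$ locally, crossing $\s$). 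Thus for every $(x,0)$ in a small interval around $0$ the short arc-orbit to $\tau$ lies in the same $\overline{M^\pm}$ (the one containing $q_0$ by hypothesis \textbf{(A)}), so $\sigma=(-\e,\e)\times\{0\}$.
\item[(ii)] If $n$ is even, then $n-1$ is odd and $Xh(x,0)=\ag x^{n-1}+\er(x^n)$ changes sign at $x=0$; so on one side of $0$ the flow of $X$ enters $M^\pm$ (the good side) and on the other it enters $M^\mp$. Only the points $(x,0)$ from which the forward (or backward, depending on which arc in \textbf{(A)} we use) orbit immediately enters $\overline{M^\pm}$ can belong to $\sigma$; this is a one-sided half-interval $[0,\e)$ or $(-\e,0]$, with the endpoint $0$ itself included precisely because hypothesis \textbf{(A)} says the whole arc $\arc{Oq_0}|_X$ (or $\arc{q_0O}|_X$) already lies in $M^\pm$. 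The precise side (left or right half) is pinned down by the sign of $\ag$ together with the orientation sign $\dg$ and with which of the two arcs in \textbf{(A)} is assumed, but in all cases $I$ is one of the two claimed half-intervals.
\end{itemize}

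The routine part is the Taylor/flow-box bookkeeping that translates ``$Xh$ keeps/changes sign at $0$'' into ``the arc-orbit stays on/leaves the side $\overline{M^\pm}$''; this is straightforward once one notes that $\p_X(t;x,0)$ has second coordinate $Xh(x,0)\,t+\er(t^2)$ for small $t$, so the side entered is governed by $\sgn(Xh(x,0))$. The main obstacle, such as it is, is purely organizational: carefully matching the four sign possibilities ($\dg=\pm1$ combined with the two arcs in \textbf{(A)}) so that in case (ii) one correctly identifies which half-interval occurs and verifies that the continuity of the construction at the endpoint $x=0$ is exactly what assumption \textbf{(A)} guarantees. I would handle this by choosing, without loss of generality, $\dg=+1$ and the arc $\arc{Oq_0}|_X\subset M^+$, proving the statement in that representative case, and then remarking that the remaining cases follow by reversing time and/or reflecting $x\mapsto -x$, which merely swaps $[0,\e)$ with $(-\e,0]$.
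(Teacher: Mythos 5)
Your argument is correct and follows essentially the same route as the paper: both apply Lemma \ref{lemma_Xh}(b) to write $Xh(x,0)=\ag x^{n-1}+\er(x^{n})$ and then read off from the parity of $n-1$ whether the sign of $Xh$ along $\s$ is constant (odd $n$, full interval) or changes at $0$ (even $n$, half-interval matching the sign forced by the side containing $q_0$). The extra flow-box bookkeeping you describe is exactly the content the paper compresses into the final sentence of its proof.
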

\begin{proof}
	If $n$  odd, then $Xh(x,0)=\ag x^{k}+\er(x^{k+1})$, where $k=n-1$ is even. It means that  $\sgn(\ag)Xh(x,0)>0$ for $x\in (-\e,\e)\setminus\{0\}$ and $\e>0$ sufficiently small. So all the orbits of $X$ passing through $(-\e,\e)\times\{0\}$ enter (or leave) $M^{\pm}$.	If  $n$ is even, then $Xh(x,0)=\ag x^{k}+\er(x^{k+1})$, where $k$ is odd. It means that $\sgn(\ag)Xh(x,0)>0$, for $x\in (0,\e)$,  and $\sgn(\ag)Xh(x,0)<0$, for $x\in (-\e,0)$, where $\e>0$ is sufficiently small. We conclude the proof by observing that the transition map is defined in the unique domain where $Xh(x,0)$ has the same sign of $X_{2}(q_{0})$.
\end{proof}

In what follows we describe the expression of the full transition map $T_{X}$ of $X$ at $(0,0)$, when the origin is a  $n$-multiplicity contact.

\begin{theorem}\label{charac1}
	Suppose that $X\in\Xr$ has a $n$-multiplicity contact with $\s$ at $p=(0,0)$. In addition, assume that $X$ satisfies condition {\bf (A)}. Then the full transition map $T_{X}: (V\cap\s)_{0}\rightarrow \tau$ (where $\tau$ is given in \ref{tau}) is given by:
	\begin{equation*}
	T_{X}(x,0)= (x_{0},y_{0}+\kappa x^{n}+\er(x^{n+1})),
	\end{equation*}
	where $\sgn(\kappa)=-\dg^{n}\sgn(X^{n}h(0,0))$. 
\end{theorem}

\begin{proof}
	As we have seen before, we can assume that $X=(\dg, f(x,y))$. 
	Consider the change of coordinates $\phi(u,v)=(x(u,v),y(u,v)),$ where $x(u,v)=\dg u$ and $y(u,v)=\p_{X}^{2}(u;0,v)$ ($\p_{X}^{2}$ denotes $\pi_{2}\circ \p_{X}$).  Notice that 
	\begin{equation} \label{partialprop1}
	\begin{array}{l}
	\dfrac{\partial x}{\partial u}(0,0)=\dg,\, \dfrac{\partial x}{\partial v}(0,0)=0,\, \dfrac{\partial y}{\partial u}(0,0)=f(0,0)=0,\,\text{and}\vspace{0.2cm} \\
	\dfrac{\partial y}{\partial v}(0,0)=\left.\dfrac{\partial \p_{X}^{2}}{\partial y}(0;0,0)=1.\right.
	\end{array}
	\end{equation}
Therefore, $\phi$ is a diffeomorphism around the origin. In addition, it can be proved that $\phi$ is a conjugation between $X$ and $\mathcal{S}(u,v)=(1,0)$ (see \cite{H}). In this new coordinate system, $(u,v)$, $\Sigma$ and $\tau$ becomes, respectively,
	$$\widetilde{\s}=\phi^{-1}(\s)=\{(u,v)\in\rn{2};\p_{X}^{2}(u;0,v)=0\} \textrm{ and } \widetilde{\tau}=\{(\dg x_{0},v);\ v\in (-\e,\e)\}.$$ See Figure \ref{changeofcoordinates}.
	
	Since $\p_{X}^{2}(0;0,0)=0$ and from \eqref{partialprop1}, the Implicit Function Theorem implies the existence of $\gamma:(-\eta,\eta)\rightarrow \rn{}$ such that $\gamma(0)=0$ and
	$\widetilde{\s}=\{(u,\gamma(u))\in\rn{2};\ u\in (-\eta,\eta)\}.$
	
	Notice that $\p_{\mathcal{S}}(t;u,v)=(t+u,v)$, so the full transition map $T_{\mathcal{S}}:\widetilde{\s}\rightarrow \widetilde{\tau}$ is given by
	$$T_{\mathcal{S}}(u,\gamma(u))=\p_{\mathcal{S}}(\dg x_{0}-u, u, \cg(u) )=(\dg x_{0},\gamma(u)).$$
	
	Now, we must characterize the function $\gamma$ around $u=0$. Computing the $k$-th derivative of $\p_{X}^{2}(u;0,\gamma(u))=0$ in the variable $u$, and using that $\p_{X}(u;0,\gamma(u))=(\dg u,\p_{X}^{2}(u,0,\gamma(u)))=(\dg u,0)$, we get
	\begin{equation}\label{eq_deriv}
\gamma^{(k)}(u)=-\dg^{k-1}\dfrac{\partial^{k-1} f}{\partial x^{k-1}}(\dg u,0)\left(\dfrac{\partial \p_{X}^{2}}{\partial y}(u;0,\gamma(u))\right)^{-1}+  \sum_{i=1}^{k-1}P_{i}^{k}(u)\gamma^{(i)}(u),
	\end{equation}
where $P_{i}^{j}$ are continuous functions. 
	From Lemma \ref{lemma_Xh} (a) and equation \eqref{eq_deriv} we obtain that $\gamma^{(k)}(0)=0$, for every $1\leq k\leq n-1$ and
	$$\gamma^{(n)}(0)= - \dg^{n-1}\dfrac{\partial^{n-1} f}{\partial x^{n-1}}(0,0)=-X^{n}h(0,0).$$	
	Consequently, 
	$T_{\mathcal{S}}(u,\gamma(u))=(\dg x_{0}, \alpha u^{n}+\er(|u|^{n+1})),$
	where $\alpha=-X^{n}h(0,0)$.
	
	From the above construction, the following diagram is commutative.
	
	\begin{center}
		\begin{tikzpicture}
		\matrix (m) [matrix of math nodes,row sep=3em,column sep=4em,minimum width=2em]
		{
			\s & \tau \\
			\widetilde{\s} & \widetilde{\tau} \\};
		\path[-stealth]
		(m-1-1) edge node [left] {$\phi^{-1}$} (m-2-1)
		edge  node [below] {$T_{X}$} (m-1-2)
		(m-2-1.east|-m-2-2) edge node [below] {$T_{\mathcal{S}}$} (m-2-2)
		(m-1-2) edge node [right] {$\phi^{-1}$} (m-2-2);
		\end{tikzpicture}
	\end{center}

Since  $\pi_{1}\circ\phi^{-1}(x,0)=\dg x$ and $\phi^{-1}(x,0)\in \widetilde{\s}$, it follows that $\phi^{-1}(x,0)=(\dg x,\gamma( \dg x))$. Also, observe that $(x_{0},y_{0})=\p_{X}(T_{0},0,0)=(\dg T_{0}, \p_{X}^{2}(T_{0},0,0))$. So, $\dg x_{0}=T_{0}$. Hence,
	$$
	T_{X}(x,0)=\phi\circ T_{\mathcal{S}}\circ\phi^{-1}(x,0)=(x_{0},y_{0}+ \kappa x^{n}+\er(x^{n+1})),											
	$$
	where
	$$\kappa=-\dfrac{\partial \p_{X}^{2}}{\partial y}(T_0,0,0)X^{n}h(0,0)\dg^{n}.$$
	
	Finally , we can take $|T_{0}|$ small enough such that $\dfrac{\partial \p_{X}^{2}}{\partial y}(T_{0};0,0)>0$ since $\dfrac{\partial \p_{X}^{2}}{\partial y}(0;0,0)=1>0$. Therefore, $\sgn(\kappa)=-\dg^{n}\sgn(X^{n}h(0,0))$.
\end{proof}

\begin{figure}[h!]
	\centering
	\bigskip
	\begin{overpic}[width=10cm]{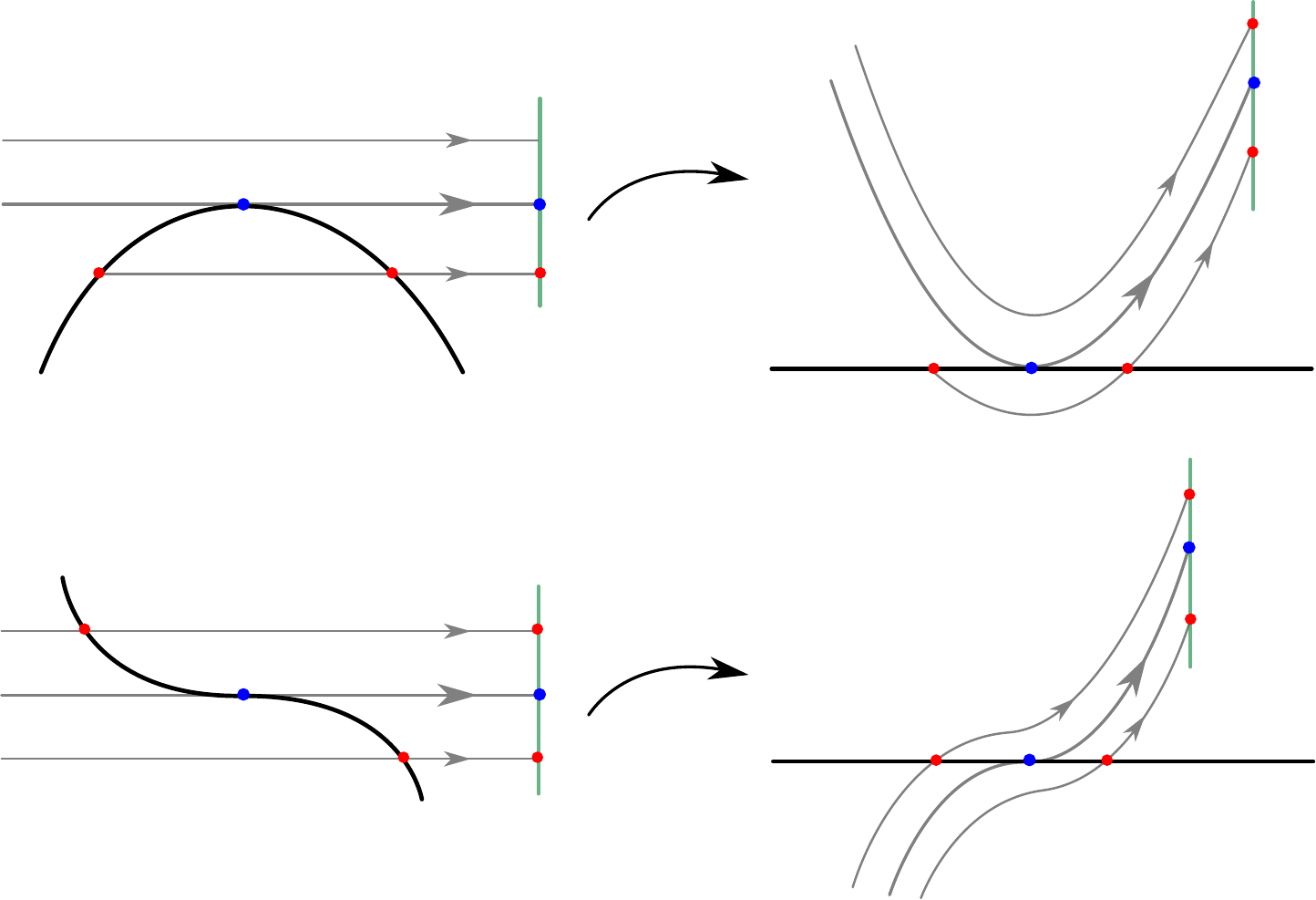}
		\put(32.5,6){{\footnotesize $\widetilde{\s}$}}
		\put(42,8){{\footnotesize $\widetilde{\tau}$}}
		\put(98,7){{\footnotesize $\s$}}
\put(91.5,18){{\footnotesize $\tau$}}	
		\put(36,40){{\footnotesize $\widetilde{\s}$}}
\put(42,45){{\footnotesize $\widetilde{\tau}$}}
\put(98,37){{\footnotesize $\s$}}
\put(96,53){{\footnotesize $\tau$}}			
	\end{overpic}
	\bigskip
	\caption{  Illustration of the change of coordinates $\phi^{-1}(u,v)$ at a fold and a cusp point. }	\label{changeofcoordinates}
\end{figure} 

Now, let $X_{0}\in\Xr$ satisfy the assumptions of Theorem \ref{charac1}.  We know that there exist $\e>0$ and a neighborhood $\mathcal{U}$ of $X_0$ such that a full transition map $T_X: (-\e,\e)\rightarrow (y_{0}-\e,y_{0}+\e)$ is defined for  each $X\in\mathcal{U}$ (see Section \ref{trans-sec}). In what follows we shall characterize this map.

\begin{theorem}\label{charac2}
	Suppose that $X_0\in\Xr$ has a $n$-multiplicity contact with $\s$ at $p=(0,0)$, with $n\geq2$. In addition, assume that $X_0$ satisfies condition {\bf (A)}. 
	Then, there exist a neighborhood $\U_0$ of $X_{0}$ in $\Xr$, $n-2$ surjective functions  $\lambda_{i}:\U_0\rightarrow(-\delta,\delta),$ $i=1,\cdots, n-2,$ depending continuously on $X$,  such that for each $X\in\U_0$ there exists a diffeomorphism $h_{X}: (-\e,\e)\rightarrow (-\e,\e)\times\{0\}$ for which the full transition map $T_{X}: (-\e,\e)\times\{0\}\rightarrow \tau$ is given by:
	\begin{equation*}
	T_{X}(h_{X}(x))=\Big(x_0, \lambda_{0}(X)+\kappa(X) x^{n}+ \sum_{i=1}^{n-2}\lambda_{i}(X)x^{i}+\er(x^{n+1})\Big),
	\end{equation*}
	where $\lambda_{0}=\pi_2\circ T_{X}(0,0)$, $\sgn(\kappa)=-\dg^{n}\sgn(X^{n}h(0,0))$ and $\dg=\pm1$. 
\end{theorem}

\begin{proof}
 In what follows, for the sake of simplicity, we shall identify $(-\e,\e)\times \{0\}$ and $\tau$ with the intervals $(-\e,\e)$ and $(y_0-\e,y_0+\e)$, respectively.	

From the discussion above, define the continuous map
	
	\begin{equation*}
	\begin{array}{lcll}
	T:&\U&\longrightarrow& \mathcal{C}_{0}^{\infty}(\R,\R)/\sim\\
	& X&\longmapsto& [T_{X} - T_{X}(0)],
	\end{array}
	\end{equation*}
	where $ \mathcal{C}_{0}^{\infty}(\R,\R)/\sim$ is the space of germs of $\mathcal{C}^{\infty}$ functions $f:\R\rightarrow\R$  such that $f(0)=0$, with the equivalence relation
	$$f\sim g \textrm{ if, and only if, } f-g=\er(x^{n+1}).$$
	As usual, $[f]$ denotes the equivalence class of $ \mathcal{C}_{0}^{\infty}(\R,\R)/\sim$ which contains $f\in\mathcal{C}_{0}^{\infty}(\R,\R)$.
	
	Denote $T(X_{0})$ by $T_0$ and notice that $T$ is surjective onto an open neighborhood of $T_0$ in $ \mathcal{C}_{0}^{\infty}(\R,\R)/\sim$. In fact, consider the vector field $X_{0}$ in the straightened form $\mathcal{S}=(1,0)$, then $\s$ is the graph $\{(x,h(x));\ x\in (-\e,\e)\}$ in these coordinates, for some $\e>0$ sufficiently small, and $T_{0}(x)=h(x)$ (see proof of Theorem \ref{charac1}). Therefore, any sufficiently small  perturbation of $h$ in the space of functions  corresponds to the transition map of a vector field $X$ in $\U$ by considering a small change in the coordinate system.
	
	From Theorem \ref{charac1} it follows that $T_0=[f_0]$, where $f_0(x)=\kappa x^n$. Now, since the stable unfolding of $f_0$ is given by $F_{\lambda}(x)=\kappa x^{n}+\sum_{i=1}^{n-2}\lambda_{i}x^{i}$, there exists a neighborhood $\mathcal{W}$ of $T_0$ in $\mathcal{C}_{0}^{\infty}(\R,\R)/\sim$ such that, for each $f\in\mathcal{W}$, there exist $n-2$ parameters $\lambda_{i}=\lambda_{i}(f)$ and a diffeomorphism $h_{f}:\R\rightarrow\R$, such that
	$$f(h_{f}(x))=\kappa x^{n}+ \sum_{i=1}^{n-2}\lambda_{i}x^{i}+\er(x^{n+1}).$$
	In addition, the parameters $\lambda_i$ and $h_{f}$ depend continuously on $f$.
	
	Taking $\U_0=T^{-1}(\mathcal{W})$, we have that for each $X\in\U_0$
	$$	T_{X}(h_{X}(x))= \lambda_{0}+\kappa x^{n}+ \sum_{i=1}^{n-2}\lambda_{i}x^{i}+\er(x^{n+1}),$$
	where $\lambda_{i}:\U_0\rightarrow(-\delta,\delta),$ for $i=1,\cdots, n-2,$ are surjective functions depending continuously on $X$ and $\lambda_{0}=\pi_2\circ T_{X}(0)$.
\end{proof}

\section{Regular-Tangential $\s$-Polycycles}\label{reg-tan_sec}

This section is devoted to apply the method of displacement functions, described in Section \ref{approach_sec}, for obtaining bifurcation diagrams of nonsmooth vector fields around some  regular-tangential $\s$-polycycles (see Definition \ref{def_scyclegeneralized}).  More specifically, in Section \ref{descripregtan}, we describe the displacement functions appearing in the crossing system \eqref{cross} for such $\s$-polycycles. In Section \ref{polyn_sec}, we prove that at most one crossing limit cycle bifurcates from $\s$-polycycles having  a unique regular-tangential singularity. Then, in Section \ref{polyn+_sec} we generalize the previous result for $\s$-polycycles having several regular-tangential singularities. In particular, the bifurcation diagrams of $\s$-polycycles having either a unique $\s$-singularity of regular-cusp type or only two singularities of  regular-fold type are completely described in Sections \ref{cusp_sec} and \ref{2fold-fold}, respectively.

\subsection{Description of the crossing system}\label{descripregtan}

\

Assume that $Z_{0}=(X_{0},Y_{0})\in\Or$ has a $\s$-polycycle $\Gamma_0$ containing $k$ regular-tangential singularities $p_{i}$ of multiplicity $n_{i}\in\N$, $1\leq i\leq k$. Consider a coordinate system $(x,y)$ satisfying that, for each $i\in\{1,2,\ldots,k\}$, $x(p_{i})=a_{i}$, $y(p_{i})=0$, and $h(x,y)=y$ near $p_{i}$.

Firstly, we shall characterize $\Gamma_0$ locally around each point $p_i$, $i=1,\ldots,k$. Assume that, for a given $i\in\{1,\ldots,k\}$, $p_{i}$ satisfies $Y_{0}h(p_{i})\neq 0$ and consider a small neighborhood $U_i$ of $p_i$. Accordingly, $p_i$ has one of the following types
\begin{enumerate}
	\item[(\textbf{$R_1$})] $\s\cap U_i\setminus\{p_{i}\}$ has a connected component contained in $\s^{c}$ and another in $\s^{s}$, and $\Gamma_0\cap W^{s,u}_{+}(p_{i})\neq \emptyset$ (see Figure \ref{typessing} (a));
	\item[(\textbf{$R_2$})] $\s\cap U_i\setminus\{p_{i}\}$ has a connected component contained in $\s^{c}$ and another in $\s^{s}$ and either $\Gamma_0\cap W^{s}_{+}(p_{i})= \emptyset$ or $\Gamma_0\cap W^{u}_{+}(p_{i})= \emptyset$ (see Figure \ref{typessing} (b,c));	
	\item[(\textbf{$R_3$})] $\s\cap U_i\setminus\{p_{i}\}\subset \s^{c}$ (see Figure \ref{typessing} (d)).
\end{enumerate}
The points $p_{i}$ satisfying $Xh(p_{i})\neq 0$ are classified analogously.

\begin{figure}[h!]
	\centering
	\bigskip
	\begin{overpic}[width=13cm]{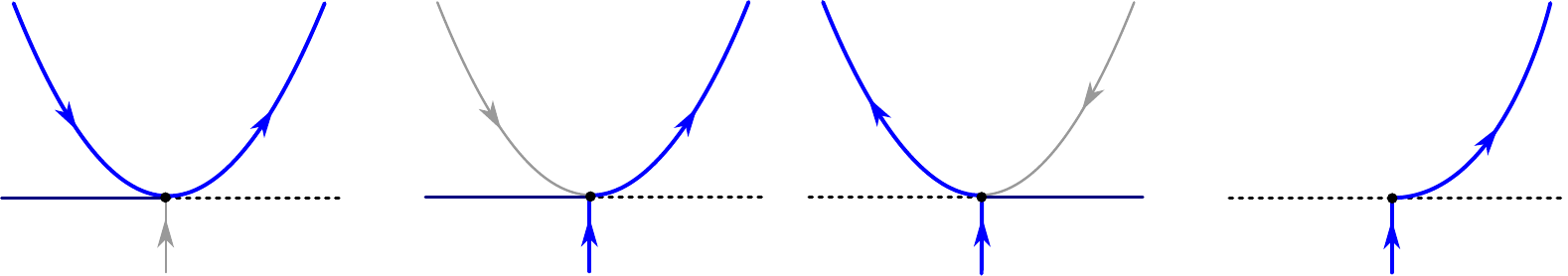}
		\put(8.5,-4){(a)}\put(35.5,-4){(b)}\put(61,-4){(c)}\put(86.5,-4){(d)}	
			\put(10,6){{\footnotesize $p_i$ }}
		\put(37,6){{\footnotesize $p_i$ }}	
		\put(62,6){{\footnotesize $p_i$ }}	
		\put(88,6){{\footnotesize $p_i$ }}	
		\put(20,2.5){{\footnotesize $\s$ }}		
	\end{overpic}
	\bigskip
	\caption{Types of local characterization of $\Gamma_0$ around the regular-tangential singularity $p_i$: Figure (a) and its time reversing illustrate type $R_1$; Figures (b,c) and their time reversing illustrate type $R_2$; Figure (d) and its time reversing illustrate type $R_3$. Bold lines represent the intersection $\Gamma_0\cap W^{s,u}_{+}(p_{i})$. Dashed lines represents $\s^c$. }\label{typessing}	
\end{figure} 

If $p_i$ is of type $R_1$, then we consider $\sigma_i(Z_0)=\{a_i\}\times(-\e_{i},+\e_{i})\cap M^{+}$. So, we can follow the case \textbf{(E-I)} from Section \ref{disp_sec} to construct the transfer functions $T_{i}^{u,s}: \sigma_{i}(X_0)\rightarrow \tau_{i}^{u,s}$ defined by the flow of $X_0$. Recall that $T_{i}^{s}$ and $T_{i}^{u}$ are restrictions of germs of diffeomorphisms (see Figure \ref{transition-I}).

\begin{figure}[h!]
	\centering
	\bigskip
	\begin{overpic}[width=4cm]{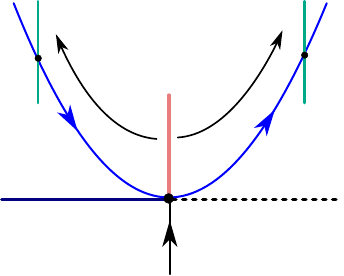}
								\put(52,16){{\footnotesize $p_i$ }}	
								\put(12,75){{\footnotesize $\tau_i^s$ }}	
								\put(80,75){{\footnotesize $\tau_i^u$ }}
								\put(30,55){{\footnotesize $T_i^s$ }}
								\put(60,55){{\footnotesize $T_i^u$ }}		
	\end{overpic}
	\bigskip
	\caption{  Construction of the maps $T_i^{u,s}$: type $R_1$. }\label{transition-I}	
\end{figure} 

If $p_i$ is of type $R_2$ or $R_3$, we consider the tangential section $\sigma_{i}(Z_0)=(a_{i}-\e_{i},a_{i}+\e_{i})\times\{0\} \cap \s^{c},$ where $\e_{i}$ is sufficiently small. So, we can follow the case \textbf{(O)} from Section \ref{disp_sec} to construct the transfer functions $T_i^{u}: \sigma_i(Z_0)\rightarrow \tau_i^{u}$ and $T_i^{s}: \sigma_i(Z_0)\rightarrow \tau_i^{s}$ induced by the flows of $X_0$ and $Y_0$, respectively. Notice that $T_i^s$ is the restriction of a germ of diffeomorphism and Theorem \ref{charac1} is applied to characterize $T_i^u$ (see Figure \ref{frII}).

\begin{figure}[h!]
	\centering
	\bigskip
	\begin{overpic}[width=11cm]{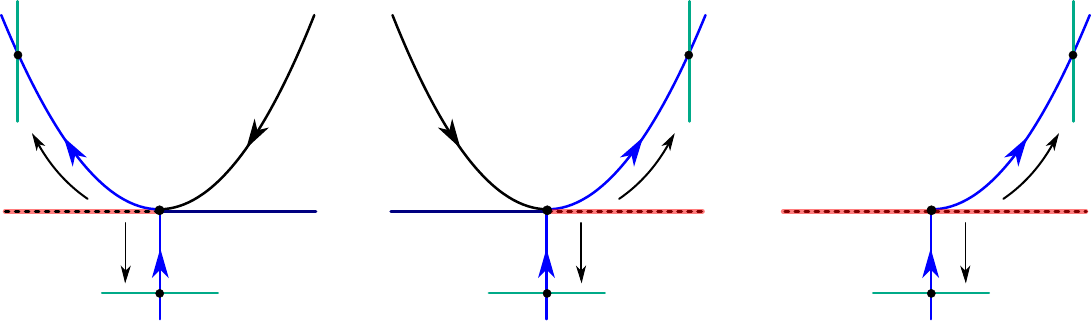}
		\put(14,12){{\footnotesize $p_i$}}
		\put(49,12){{\footnotesize $p_i$}}
		\put(84,12){{\footnotesize $p_i$}}
	\put(7,5){{\footnotesize $T_i^s$}}
\put(55,5){{\footnotesize $T_i^s$}}
\put(90,5){{\footnotesize $T_i^s$}}	
	\put(1,12){{\footnotesize $T_i^u$}}
\put(61,12){{\footnotesize $T_i^u$}}
\put(96,12){{\footnotesize $T_i^u$}}	
\put(2,28){{\footnotesize $\tau_i^u$}}	
\put(60,28){{\footnotesize $\tau_i^u$}}	
\put(95,28){{\footnotesize $\tau_i^u$}}	
\put(18,0){{\footnotesize $\tau_i^s$}}	
\put(54,0){{\footnotesize $\tau_i^s$}}	
\put(89.5,0){{\footnotesize $\tau_i^s$}}	
\put(13,-3){{\footnotesize $(a)$}}	
\put(48,-3){{\footnotesize $(b)$}}	
\put(85,-3){{\footnotesize $(c)$}}	
	\end{overpic}
	\bigskip
	\caption{ Construction of the maps $T_i^{u,s}$: types $R_2$ ($(a)$ and $(b)$) and $R_3$ $(c)$. }\label{frII}	
\end{figure}

Now, in order to describe the displacement functions associated with $\Gamma_0$, we characterize the unfolding of each tangential singularity. 

If $p_{i}$ is of type $R_1$, then $T_{i}^{s}$ and $T_{i}^{u}$ are germs of diffeomorphisms at $p_{i}$. So, as described in Section \ref{disp_sec}, for any $Z=(X,Y)\in\Or$ in a small neighborhood $\V_{i}$ of $Z_{0}$, there exist transfer functions $T_{i}^{s}(Z): \sigma_{i}(Z)\rightarrow \tau_{i}^{s}$ and $T_{i}^{u}(Z):\sigma_{i}(Z)\rightarrow \tau_{i}^{s}$ which are also germs of diffeomorphisms at $p_{i}$. From now on, we simplify the notation by omitting the dependence of functions and parameters on $Z$, except when it is necessary.

If $p_i$ is of type $R_2$ and $R_3,$ from Theorem \ref{charac2} there exists a neighborhood $\V$ of $Z_{0}$ such that for each $Z=(X,Y)\in\V$ the transfer function corresponding to $p_i$, for $i\in\{1,2,\ldots,k\},$ is given by
\begin{equation}\label{Tu}
	T^u_i(h^i(x))=\kappa_i (x-a_i)^{n_i}+ \sum_{j=0}^{n_i-2}\lambda_{j}^i(x-a_i)^{j}+\er((x-a_i)^{n_i+1}),
	\end{equation}
	where  $h^i=h^i_Z:(a_i-\e,a_i+\e)\rightarrow(a_i-\e,a_i+\e)\times\{0\}$ is a diffeomorphism, with $h^i_{Z_0}(a_i)=a_i,$ $\sgn(\kappa_{i}(Z))=\sgn(\kappa_{i}(Z_{0})),$ and $\lambda^i_j=\lambda^{i}_j(Z),$ for $j\in\{0,1,\ldots,n_{i}-2\},$   are parameters.

Notice that $T_i^s$ is a germ of diffeomorphism on $\sigma_i$. Thus, for $Z\in\V$ and for each $i=1,\cdots,k$ we have obtained two maps $T_{i}^{s,u}$ defined in a neighborhood of $p_{i}$ which describes the behavior of the orbits contained in $M^{+}$ connecting points of $\tau_{i}^{s,u}$ and $\sigma_{i}$. In addition, each transversal section $\tau_{i-1}^{u}$ is connected to $\tau_{i}^{s}$ via a diffeomorphism $D_{i}$ satisfying:
\begin{equation}
\label{invDzTs}[D_{i-1}]^{-1}\circ T_{i}^s(h^{i}(x))=\widetilde c_{i-1}+\widetilde d_{i-1}(x-a_{i})+\er_2(x-a_{i}),
\end{equation}
where $\widetilde c_{i-1}(Z_{0})=q_{i-1}^{u}$, and $\sgn(\widetilde d_{i-1}(Z))=\sgn(\widetilde d_{i-1}(Z_0))$ (see Figure \ref{dispfig}). Recall that, in the above expression, we are assuming that $Y_0h(p_i)\neq0$. The case $X_0h(p_i)\neq0$ follows analogously.

Now, let $\mathcal{A}$ be an open annulus around $\Gamma_0$ containing the sections $\sigma_{i}(Z_{0})$.
Using the above characterization of the transfer functions and their unfoldings and Definition \ref{disp_def} we obtain that:
\begin{equation*}
\Delta_{i}(h^i(x_{i}),h^{i+1}(x_{i+1}))= \overline{\Delta_{i}}(x_{i}^u,x_{i+1}^s) +\er_{N_i +1}(x_i^u) +\er_{M_i +1}(x_{i+1}^s),
\end{equation*} 

\noindent where $x_i^u=x_{i}-a_{i}$, $x_{i+1}^s=x_{i+1}-a_{i+1}$, and

\begin{equation*}
\overline{\Delta_{i}}(x_{i}^u,x_{i+1}^s)=\beta_{i}+ P_{i}^{N_{i}}(x_i^u)+Q_{i}^{M_{i}}(x_{i+1}^s).
\end{equation*}
Here, $\beta_i=\beta_{i}(Z)=\Delta_{i}(h^i(a_{i}),h^{i+1}(a_{i+1}))$ and satisfies $\beta_{i}(Z_0)=0$. In addition, $P_{i}^{N_{i}}$ and $Q_{i}^{M_{i}}$ are non-vanishing polynomials of degree $N_{i}\leq \max\{2,\ n_{i}-2\}$ and $M_{i}\leq \max\{2,\ n_{i+1}-2\}$ with coefficients depending on $Z$ and satisfying $P_{i}^{N_{i}}(0)=Q_{i}^{M_{i}}(0)=0.$

Finally, the crossing system \eqref{cross} is equivalent to the following system:
\begin{equation}\label{algsystem}
\left\{
\begin{array}{l}
\vspace{0.2cm}\overline{\Delta_{1}}(x_{1}^u,x_{2}^s)+\er_{N_1 +1}(x_1^u) +\er_{M_1 +1}(x_{2}^s)=0,\\
\vspace{0.2cm}\overline{\Delta_{2}}(x_{2}^u,x_{3}^s)+\er_{N_2 +1}(x_i^u) +\er_{M_2 +1}(x_{3}^s)=0,\\
\ \ \ \ \ \ \ \ \ \ \vdots\\
\vspace{0.2cm}\overline{\Delta_{k-1}}(x_{k-1}^u,x_{k}^s)+\er_{N_{k-1} +1}(x_{k-1}^u) +\er_{M_{k-1} +1}(x_{k}^s)=0,\\
\vspace{0.2cm}\overline{\Delta_{k}}(x_{k}^u,x_{1}^s)+\er_{N_k +1}(x_k^u) +\er_{M_1 +1}(x_{1}^s)=0,\\
\vspace{0.2cm}x_{i}^{s,u}=f^{s,u}_{i}(x_{i})=x_i-a_i,\ i=1,\cdots,k,\\
h^i(x_{i})\in\sigma_{i},\ i=1,\cdots,k.
\end{array}
\right.
\end{equation}

\subsection{$\s$-Polycycles having a unique regular-tangential singularity}\label{polyn_sec}

\

Without loss of generality, the following conditions characterize the nonsmooth vector fields $Z_0=(X_0,Y_0)$ which admit a $\s$-polycycle having a unique regular-tangential singularity of multiplicity $n$ (see Figure \ref{cycletangreg_fig}):
\begin{enumerate}[i)]
	\item There exists $p\in\s$ such that $X_{0}$ has a $n$-multiplicity contact with $\s$ at $p$, $n\geq 2$ and $Y_0h(p)\neq 0$.
	\item $W^{u}_{+}(p)$ intersects $\s^{c}$ at $q\neq p$ and the arc-orbit $\arcf{p}{q}{X_{0}}$  is contained in $M^{+}$; 
	\item $W^{s}_{-}(p)$ intersects $\s^{c}$ at $r\neq p$ and the arc-orbit $\arcf{p}{r}{Y_{0}}$ of $Y_0$ is contained in $M^{-}$;
	\item If $r\neq q$, there exists a regular orbit of $Z_0$ connecting $r$ and $q$. 		
\end{enumerate}
Accordingly, consider $\Gamma_0$ as the union of the arc-orbits $\arcf{p}{r}{Z_{0}}$, $\arcf{r}{q}{Z_{0}}$, and $\arcf{q}{p}{Z_{0}}$.

\begin{figure}[h!]
	\centering
	\bigskip
	\begin{overpic}[width=10cm]{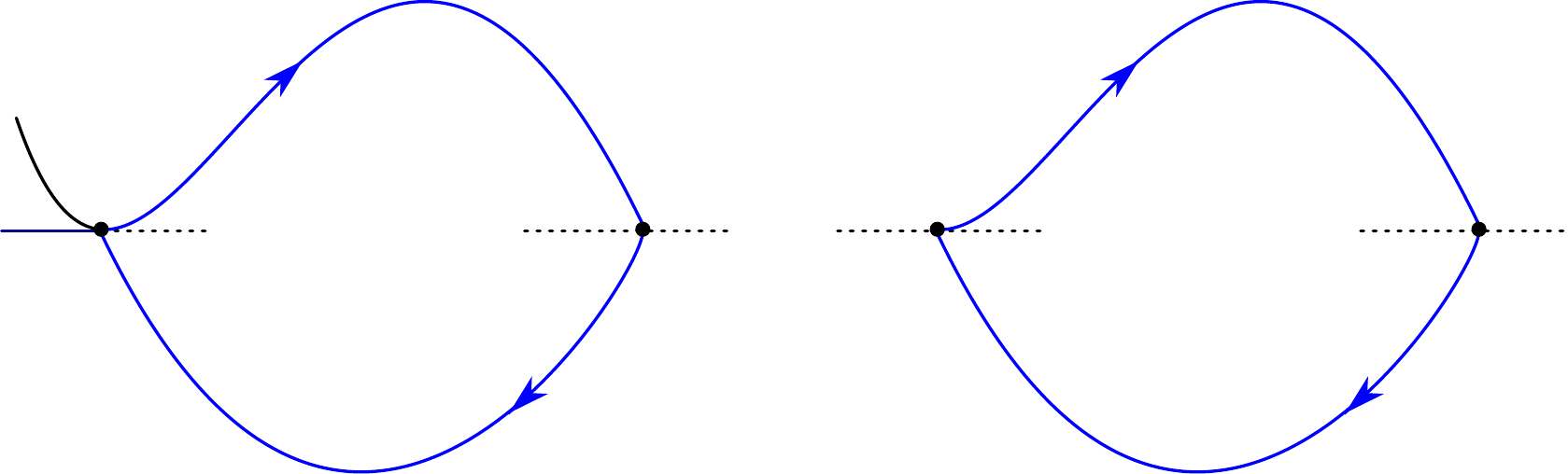}
		\put(23,-5){{\footnotesize $(a)$}}		
		\put(6,17){{\footnotesize $p$}}
		\put(59,17){{\footnotesize $p$}}
		\put(42,17){{\footnotesize $q$}}
\put(95,17){{\footnotesize $q$}}		
		\put(76,-5){{\footnotesize $(b)$}}	
	\end{overpic}
	\bigskip
	\caption{An example of $\s$-polycycle $\Gamma_0$ having a unique regular-tangential singularity of multiplicity $n$, when $q=r$, $(a)$ $n$ is even and when $(n)$ $n$ is odd.}	\label{cycletangreg_fig}
\end{figure} 	

Following the previous section for $k=1,$ $a_1=0,$ and $x_1=x_2=x$ the displacement function $\Delta:\sigma\rightarrow \R$  writes
\[
\begin{array}{rl}
\Delta(h(x))=& T^u(h(x))-[D]^{-1}\circ T^s(h(x))\vspace{0.1cm}\\
=&\displaystyle \lambda_{0}+\kappa x^{n}+ \sum_{j=1}^{n-2}\lambda_{j}x^{j}+\er(x^{n+1})\\
&-\widetilde{c}-\widetilde{d}x+\er_2(x),
\end{array}
\]
where $\sgn(\widetilde d(Z))=\sgn(\widetilde d(Z_0))$. Here, it is easy to see that assumptions $(i)$-$(iv)$ imply that $\widetilde d(Z_0)<0$. Taking
\begin{equation}\label{eta}
\beta=\lambda_0-\widetilde{c},\quad \lambda=\lambda_{1}-\widetilde{d},\quad \text{and}\quad \eta=(\beta,\lambda),
\end{equation}
the displacement function $\Delta(h(x))$ writes
\begin{equation}\label{disp1_eq}
\Delta(h(x))= \beta+\lambda x +\er_{2}(x).
\end{equation}
Notice that $\eta:\V\rightarrow V$ is a surjective function onto a small neighborhood $V$ of $(0,-\widetilde{d}(Z_0))$ satisfying $\beta(Z_0)=0$ and $\lambda(Z_0)=-\widetilde d(Z_0)\neq0$. In this case, the crossing system \eqref{algsystem} is reduced to the equation $\beta+\lambda x +\er_{2}(x)=0$, $h(x)\in\sigma(Z)$.

As a first result on the $\s$-polycycle $\Gamma_0$ we have the following proposition.

\begin{prop}\label{stab_prop}
Let $\Gamma_0$ be a $\s$-polycycle having a unique regular-tangential singularity of multiplicity $n$ satisfying $(i)$-$(iv)$. Then, $\Gamma_0$ attracts the orbits passing through the section $\sigma(Z_0)$ (domain of $T^u(Z_0)$). In this case, we say that $\Gamma_0$ is C-attractive.
\end{prop}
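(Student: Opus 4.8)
The plan is to recover the stability of $\Gamma_0$ from the first-return map along the crossing orbits near $\Gamma_0$, whose contraction rate is already encoded in the displacement function \eqref{disp1_eq}. By the construction carried out just before \eqref{disp1_eq}, hypotheses $(i)$--$(iv)$ guarantee that the transfer functions $T^u(Z_0):\sigma(Z_0)\to\tau^u$ (induced by $X_0$), $T^s(Z_0):\sigma(Z_0)\to\tau^s$ (induced by $Y_0$), and the connecting diffeomorphism $D(Z_0):\tau^u\to\tau^s$ are all well defined; moreover $T^s(Z_0)$ and $D(Z_0)$ are (restrictions of) diffeomorphisms, while Theorem \ref{charac1} gives $T^u(Z_0)(h_{Z_0}(x))=\lambda_0(Z_0)+\kappa(Z_0)x^{n}+\er(x^{n+1})$ with $\kappa(Z_0)\neq0$, so $T^u(Z_0)$ is flat of order $n\ge 2$ at $p$. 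First I would observe that the composition $\mathcal{P}_0:=(T^s(Z_0))^{-1}\circ D(Z_0)\circ T^u(Z_0)$, with all three maps taken along increasing time on $\Gamma_0$, is exactly the first-return map of $Z_0$ on $\sigma(Z_0)$ associated with the crossing orbits, and that its fixed points coincide with the zeros of $\Delta(Z_0)$.

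Next I would pass to the coordinate $x$ on $\sigma(Z_0)$ furnished by $h_{Z_0}$ (with $h_{Z_0}(0)=p$) and the parametrization $\phi$ of $\tau^u$ used in Definition \ref{disp_def}, setting $u(x)=\phi\circ T^u(Z_0)\circ h_{Z_0}(x)$ and $w(x)=\phi\circ (D(Z_0))^{-1}\circ T^s(Z_0)\circ h_{Z_0}(x)$. Then $\Delta(Z_0)(h_{Z_0}(x))=u(x)-w(x)$ while $h_{Z_0}^{-1}\circ\mathcal{P}_0\circ h_{Z_0}=w^{-1}\circ u=:\widetilde{\mathcal{P}}_0$, so $\widetilde{\mathcal{P}}_0(x)=x$ exactly when $\Delta(Z_0)(h_{Z_0}(x))=0$. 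Here $u-u(0)$ vanishes to order $n$ at $0$ (with leading coefficient proportional to $\kappa(Z_0)$), whereas $w(x)=w(0)+\widetilde d(Z_0)\,x+\er_2(x)$ with $u(0)=w(0)$ and $\widetilde d(Z_0)<0$ (this is what \eqref{disp1_eq} records, since $\beta(Z_0)=0$ and $\lambda(Z_0)=-\widetilde d(Z_0)>0$). Composing and using $u(0)=w(0)$, a short computation independent of $\phi$ gives $\widetilde{\mathcal{P}}_0(x)=\dfrac{\kappa(Z_0)}{\widetilde d(Z_0)}\,x^{n}+\er(x^{n+1})$. In particular, after shrinking $\sigma(Z_0)$ if necessary, $|\widetilde{\mathcal{P}}_0(x)|<|x|$ for every $x\neq0$ in $\sigma(Z_0)$; equivalently, $\widetilde{\mathcal{P}}_0(x)-x$ has sign opposite to $\Delta(Z_0)(h_{Z_0}(x))=\lambda(Z_0)x+\er_2(x)$ on $\sigma(Z_0)$.

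To conclude I would note that $\widetilde{\mathcal{P}}_0(0)=0$ and that $\widetilde{\mathcal{P}}_0$ sends $\sigma(Z_0)$ into itself near $0$: when $n$ is even, $\sigma(Z_0)$ is one-sided (Corollary \ref{charac_sigma}) and the orientation-preservation of a crossing return map forces $\kappa(Z_0)/\widetilde d(Z_0)>0$ --- this is precisely what rules out the non-polycyclic connection of Example \ref{notpoly}; when $n$ is odd, the bound $|\widetilde{\mathcal{P}}_0(x)|<|x|$ already keeps the iterates inside $\sigma(Z_0)$. Hence $\widetilde{\mathcal{P}}_0^{\,k}(x)\to0$ as $k\to\infty$ for every $x\in\sigma(Z_0)$. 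Translating back, the positive orbit of $Z_0$ through any $z\in\sigma(Z_0)$ returns to $\sigma(Z_0)$ at the points $\mathcal{P}_0^{\,k}(z)\to p$, and by continuity of the transition maps and of $D(Z_0)$ the arcs of orbit joining consecutive returns converge to the corresponding arcs of $\Gamma_0$; so this orbit accumulates on all of $\Gamma_0$. Therefore $\Gamma_0$ attracts the orbits through $\sigma(Z_0)$, that is, $\Gamma_0$ is C-attractive.

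The step I expect to be most delicate is the bookkeeping of orientations: verifying that $\mathcal{P}_0$ (rather than its inverse) is the genuine forward return map, that $\widetilde d(Z_0)<0$ under $(i)$--$(iv)$, and that $\kappa(Z_0)/\widetilde d(Z_0)>0$ in the one-sided case ($n$ even) so that the iterates remain in the domain. Once these signs are pinned down, the flatness of $T^u(Z_0)$ at $p$ makes both the contraction estimate and the convergence of orbits to $\Gamma_0$ immediate.
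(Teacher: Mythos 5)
Your proof is correct and follows essentially the same route as the paper: both compute the first-return map as the composition $\big([D(Z_0)]^{-1}\circ T^s(Z_0)\big)^{-1}\circ T^u(Z_0)$, use the expansions of $T^u(Z_0)$ and $[D(Z_0)]^{-1}\circ T^s(Z_0)$ to obtain $\mathcal{P}_0(x)=\frac{\kappa(Z_0)}{\widetilde d(Z_0)}x^{n}+\er(x^{n+1})$, and conclude $|\mathcal{P}_0(x)|<|x|$ for small $x$. The additional bookkeeping you supply (domain invariance of the iterates and convergence of the orbit arcs to $\Gamma_0$) is sound but goes beyond what the paper records, which stops at the contraction estimate.
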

 \begin{proof}
	Notice that the first return map associated with the $\s$-polycycle $\Gamma_0$ of $Z_0$ is given by $\mathcal{P}_0(x)=\big([D]^{-1}\circ T^s\big)^{-1}\circ T^u(x)$, where, from \eqref{Tu} and \eqref{invDzTs} (recall that $h_{Z_0}=Id$),
	\[
	T^u(x)=\kappa x^n+\er_{n+1}(x)  \quad\text{and}\quad  [D]^{-1}\circ T^s(x)=\widetilde d\, x+\er_{2}(x).
	\]
	Hence, 
	\[
	\mathcal{P}_0(x)=\dfrac{\kappa}{\widetilde d}\,x^n+\er_{n+1}(x).
	\]
	Therefore, for $x$ small enough, $|\mathcal{P}_{0}(x)|<|x|$, which means that $\Gamma_0$ attracts the orbits passing through the section $\sigma(Z_0)$ (domain of $T^u(Z_0)$).  
\end{proof}

In what follows we state the main result of this section.

\begin{prop} \label{ngeral_prop}
	Let $Z_{0}$ be a nonsmooth vector field having a $\s$-polycycle $\Gamma_0$ containing a unique regular-tangential singularity of multiplicity $n$ satisfying $(i)$-$(iv)$. Then, the following statements hold.
	\begin{enumerate}[i)]
		\item There exist an annulus $\mathcal{A}_0$ at $\Gamma_0$ and a neighborhood $\V$ of $Z_0$ such that each $Z\in\V$ has at most one crossing limit cycle bifurcating from $\Gamma_0$ in $\mathcal{A}_0$, which is hyperbolic and attracting.
		\item Let $Z_{\beta,\lambda}$ be a continuous 2-parameter family in $\V$ such that $Z_{0,-\widetilde{d}(Z_0)}=Z_0$ and satisfying $Z_{\beta,\lambda}\in\eta^{-1}(\beta,\lambda),$ for every $(\beta,\lambda)\in V$. Then, for each $\lambda$ near $-\widetilde{d}(Z_0)$ and for each connected component  $C$ of $\sigma(Z_{\beta,\lambda})\cap \mathcal{A}_0$, there exists a non-empty open interval $I_{\lambda,C}$, satisfying $I_{\lambda,C}\times\{\lambda\}\subset V$, such that $Z_{\bg,\lambda}$ has a hyperbolic attracting crossing limit cycle passing through $C$, for each $\bg\in I_{\lambda,C}$.
	\end{enumerate}	  
\end{prop}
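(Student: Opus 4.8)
The plan is to reduce both statements to the analysis of the one-variable displacement equation
\[
\Delta(Z)(h_Z(x))= \beta(Z)+\lambda(Z)\, x +\er_{2}(x)=0,\qquad h_Z(x)\in\sigma(Z),
\]
established in \eqref{disp1_eq}, and then invoke the correspondence between isolated zeros of $\Delta(Z)$ lying in $\mathrm{int}(\sigma(Z))$ and crossing limit cycles of $Z$ described in Section \ref{approach_sec}. Since $\lambda(Z_0)=-\widetilde d(Z_0)\neq 0$, by continuity of $\lambda$ there is a neighborhood $\V$ of $Z_0$ on which $\lambda(Z)\neq 0$; shrinking $\V$ and choosing the annulus $\mathcal{A}_0$ small enough we may also assume that the remainder $\er_2(x)$ together with its first derivative is uniformly small relative to $|\lambda(Z)|$ on the relevant $x$-interval.

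For item \textit{(i)}, I would argue that the function $x\mapsto \Delta(Z)(h_Z(x))$ is strictly monotone on its domain: its derivative is $\lambda(Z)+\partial_x\er_2(x)$, which is bounded away from $0$ by the previous paragraph. A strictly monotone $\mathcal{C}^1$ function has at most one zero, and if a zero $x^*$ exists it satisfies $\Delta'(Z)(h_Z(x^*))\neq 0$, hence is simple; translating back through the first-return map this says the corresponding crossing limit cycle is hyperbolic. Its stability (attracting) follows from the sign of $\lambda(Z)$, which equals $\mathrm{sgn}(\lambda(Z_0))=\mathrm{sgn}(-\widetilde d(Z_0))$: combined with Proposition \ref{stab_prop} (the polycycle $\Gamma_0$ is $C$-attractive, equivalently $\widetilde d(Z_0)<0$) this pins down the attracting character. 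One must also rule out zeros accumulating on $\partial\sigma(Z)$ in a way that would not persist as genuine crossing cycles; but monotonicity makes the zero, when it exists, unique and its location a continuous function of $Z$, so for $Z$ near $Z_0$ it either lies in the interior of a component of $\sigma(Z)\cap\mathcal{A}_0$ (a crossing limit cycle) or on the boundary (a $\s$-polycycle), and no other cycle of the same topological type can appear.

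For item \textit{(ii)}, fix $\lambda$ near $-\widetilde d(Z_0)$ and a connected component $C=(c_-,c_+)$ of $\sigma(Z_{\beta,\lambda})\cap\mathcal{A}_0$ (by shrinking I may take $C$ essentially independent of $\beta$ for $\beta$ small). On $C$ the map $g_\beta(x):=\Delta(Z_{\beta,\lambda})(h_{Z_{\beta,\lambda}}(x)) = \beta(Z_{\beta,\lambda})+\lambda(Z_{\beta,\lambda})x+\er_2(x)$ is strictly monotone in $x$, and $\beta\mapsto\beta(Z_{\beta,\lambda})=\beta$ is, by definition of $\eta$ in \eqref{eta}, exactly the first coordinate, so $g_\beta$ depends monotonically (and continuously) on $\beta$ through an additive term. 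Hence the equation $g_\beta(x)=0$ has a solution $x(\beta)\in C$ precisely for $\beta$ in the open interval $I_{\lambda,C}:=\{\beta:\ g_\beta(c_-)\,g_\beta(c_+)<0\}$ — nonempty because sliding $\beta$ moves the graph of $g_\beta$ rigidly up and down past the interval $[g_0(c_-),g_0(c_+)]$ or, if $0\notin$ that interval, past a translate that straddles $0$. For each such $\beta$ the zero is interior to $C$ and simple, giving by \textit{(i)} a hyperbolic attracting crossing limit cycle through $C$; and $I_{\lambda,C}\times\{\lambda\}\subset V$ after shrinking, since $V$ was taken to be a small neighborhood of $(0,-\widetilde d(Z_0))$.

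The main obstacle is bookkeeping rather than conceptual: one must check that the reductions leading to \eqref{disp1_eq} — the characterization of $T^u,T^s,D$ and their unfoldings from Theorems \ref{charac1} and \ref{charac2} — are uniform in $Z$ over a single neighborhood $\V$ and a single annulus $\mathcal{A}_0$, so that the remainder $\er_2(x)$ and its derivative are genuinely $o(1)$ uniformly; and one must verify that the surjectivity of $\eta:\V\to V$ lets us realize every needed $(\beta,\lambda)$, which is where the hypothesis that $\eta$ is surjective onto a neighborhood of $(0,-\widetilde d(Z_0))$ is used. Once that uniformity is in hand, both statements are immediate from the elementary fact that a $\mathcal{C}^1$ function with nonvanishing derivative has at most one zero, depending monotonically on an additive parameter.
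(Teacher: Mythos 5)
Your proposal is correct and follows essentially the same route as the paper: both arguments rest on the fact that $\partial_x\Delta(Z)\approx\lambda(Z)$ is bounded away from zero for $Z$ near $Z_0$, so the displacement function has at most one zero, necessarily simple, located at $x^*=-\beta/\lambda+\er_2(\beta)$, which is then swept through the interior of each component $C$ by varying $\beta$. The paper merely packages your monotonicity argument as an application of the Implicit Function Theorem for Banach spaces to the auxiliary function $\mathcal{F}(Z,\beta,\lambda,x)=\widetilde{\Delta}(Z)(h_Z(x))-\beta(Z)-\lambda(Z)x+\beta+\lambda x$; note only that the sign $\widetilde d(Z_0)<0$ used for the attracting character comes from the orientation assumptions $(i)$--$(iv)$ rather than from Proposition \ref{stab_prop}.
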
		
 \begin{proof}
	Consider the function $\eta:\V\rightarrow V$ given by \eqref{eta}. For each $Z=(X,Y)\in\V$, we associate the displacement function $\Delta$ given in \eqref{disp1_eq}. From Section \ref{disp_sec}, we have that there exists $\e>0$ such that, for each $Z\in\V$, there exists a function $\widetilde{\Delta}:(-\e,\e)\rightarrow \R$ which is an extension of $\Delta$.
	
	Define the $\Cr$ function $\mathcal{F}: \V\times V\times (-\e,\e)\rightarrow \R$ as
	$$\mathcal{F}(Z,\beta,\lambda,x)=\widetilde{\Delta}(h(x))-\bg(Z)-\lambda(Z)x+\bg+\lambda x,$$
	and notice that
	$$\mathcal{F}(Z_0,0,-\widetilde{d}(Z_0), 0)=0,\quad \text{and}\quad \partial_x\mathcal{F}(Z_0,0,-\widetilde{d}(Z_0), 0)=-\widetilde{d}(Z_0)\neq 0.$$
	
	From the Implicit Function Theorem for Banach Spaces and reducing $\V$ and $V$ if necessary, there exists a unique $\Cr$ function $\mathcal{X}:\mathcal{V}\times V\rightarrow (-\e,\e)$ such that $\mathcal{F}(Z,\beta,\lambda,x)=0$ if, and only if, $x=\mathcal{X}(Z,\beta,\lambda)$.
	
	Since $$\mathcal{F}(Z,\bg,\lambda, x)=\bg+\lambda x+\er_2(x),$$
	it follows that $\mathcal{X}(Z,0,\lambda)=0$, for every $(Z,0,\lambda)\in\V\times V$. Consequently, we can see that
	\begin{equation}\label{zerogeral}\mathcal{X}(Z,\bg,\lambda)=-\dfrac{\bg}{\lambda}+\er_2(\bg).	\end{equation}
	
	It follows from the definition of the function $\mathcal{F}$ that
	\begin{equation}\label{zero}
	\mathcal{X}^*(Z)=\mathcal{X}(Z,\bg(Z),\lambda(Z))
	\end{equation}
	is the unique zero of $\widetilde{\Delta}(Z)$ in $(-\e,\e)$. Hence, $\Delta(Z)$ has at most one zero in $\sigma(Z)$. Moreover, since
	\[
	\dfrac{\partial \widetilde\Delta(Z_0)}{\partial x}(\mathcal{X}^*(Z_0))=-\widetilde d(Z_0)>0,
	\]
	it follows from \eqref{disp1_eq} that
	\[
	\dfrac{\partial \widetilde\Delta(Z)}{\partial x}(\mathcal{X}^*(Z))=\la(Z)+\er_2(\mathcal{X}^*(Z))>0,
	\]
	 for $Z$ sufficiently near $Z_0$. Therefore, the crossing limit cycle is hyperbolic and  attracting (from construction). The proof of item $(i)$ follows by taking $\mathcal{A}_0=\{p\in M; d(p,\Gamma_0)<\e\}$, where $d$ denotes the Hausdorff distance.
	
	Now, consider the family $Z_{\bg,\lambda}$ given in item $(ii)$. The unique zero of $\widetilde{\Delta}(Z_{\bg,\lambda})$ is given by
	\begin{equation}\label{xstar}
	x^*(\beta,\lambda)=\mathcal{X}^*(Z_{\beta,\lambda})=-\dfrac{\beta}{\lambda}+\er_2(\bg).
	\end{equation}
Recall that each isolated zero, $x_0$, of $\Delta(Z_{\beta,\lambda})$ is either a crossing limit cycle (if $x_0\in\textrm{int}(\sigma(Z_{\bg,\lambda}))$) or a $\s$-polycycle (if $x_0\in\partial\sigma(Z_{\bg,\lambda})$). So, let $C=(a,b)$ be a connected component of $\sigma(Z_{\bg,\lambda})\subset(-\e,\e)$ for some fixed parameter $\lambda\in\pi_{2}(V).$ Hence, from \eqref{xstar}, there exists a non-empty open interval $I_{\lambda,C}$  such that $I_{\lambda,C}\times\{\lambda\}\subset V$ and $x^*(\bg,\lambda)\in \textrm{int}(C)$ whenever $\bg\in I_{\lambda,C}$.			
\end{proof}

 \begin{remark} 
If we change the roles of $s$ and $u$ in the assumptions $(i)$-$(iv)$ in order to reverse the orientation of the cycle, all the results remain the same reversing the stability.
\end{remark}

Let $Z$ be a nonsmooth vector field sufficiently near $Z_0,$ and consider $\mathcal{X}^*(Z)$ given by \eqref{zero}. Propositions \ref{stab_prop} and \ref{ngeral_prop} provides the following possibilities for the  crossing dynamics in a small annulus $\mathcal{A}_0$ of $\Gamma_0$:
\begin{enumerate}[i)]
	\item if $\mathcal{X}^*(Z)\notin \sigma(Z)$, then $Z$ has no crossing limit cycles or $\s$-polycycles;
	\item if $\mathcal{X}^*(Z)\in \textrm{int}(\sigma(Z))$, then $Z$ has a unique crossing limit cycle with the same stability of $\Gamma_0$;
	\item if $\mathcal{X}^*(Z)\in \partial\sigma(Z)$, then $Z$ has a unique $\s$-polycyle containing $m\leq n-1$ regular-tangential singularities of multiplicity $n_{i}$, with $\sum_{i=1}^m n_{i}\leq n$.			
\end{enumerate}
In addition, items (i) and (ii) occur in open regions of the parameter space and item (iii) occurs in a hypersurface of the parameter space.

\subsection{$\s$-Polycycles having a unique regular-cusp singularity }\label{cusp_sec}

\

In the previous section, assuming that $Z_0$ has a $\s$-polycycle $\Gamma_0$ admitting a unique regular-tangential singularity of multiplicity $n$, $n\geq 2,$ we have identified all the possible crossing behavior of nonsmooth vector fields $Z=(X,Y)$ sufficiently near $Z_0$ in a small annulus $\mathcal{A}_0$ of $\Gamma_0.$
Nevertheless, the domain $\sigma(Z),$ of the displacement function \eqref{disp1_eq}, has some particularities depending on the multiplicity $n$. 
In order to illustrate it, we describe the bifurcation diagram of $Z_0$ around $\Gamma_0$ assuming that $n=3$. 

As before, the displacement function writes
\begin{equation}\label{cusp_disp}
\Delta(x)= \beta +\lambda x +\er_{2}(x).
\end{equation}
For the sake of simplicity, we are omitting the parametrization $h(x)$ in the displacement function \eqref{cusp_disp}. 

As we have shown before, $\Delta(Z)$ has a unique zero $x^*(\eta(Z))$ in an interval $(-\e,\e)$. Now, we have to study how the domain $\sigma(Z)$ of $\Delta(Z)$ changes with $Z$.
Now, we use the parameter $\la(Z)$, defined in \eqref{eta}, to characterize $\sigma(Z)$. Recall that $\lambda=\widetilde d-\lambda_1$ and $\lambda_1$ is given in the unfolding of $T^u_{Z_0}$. Analogously to the proof of Theorem \ref{charac2}, we consider a coordinate system $(\bar x,\bar y)$ which trivializes the flow of $X$ at $(0,0).$ In this coordinate system, $\s=\{(\bar x, \gamma_{\lambda_1}(\bar x)); \bar x\in(-\e,\e)\}$ and the transition map $T^u$ becomes $T^u_*(\bar x)=\gamma_{\lambda_1}(\bar x)$, where $\gamma_{\lambda_1}(\bar x)= \kappa \bar{x}^{3}+\lambda_1 \bar x+\er_{4}(\bar x)$ and $\kappa(Z_0)=-X_{0}^{3}h(0,0).$

There is no loss of generality in assuming that $\kappa(Z_0)<0$, since the case $\kappa(Z_0)>0$ is completely analogous. Hence, we have the following situation (see Figure \ref{cusp_unf_fig}):
\begin{figure}[h!]
	\centering
	\bigskip
	\begin{overpic}[width=13cm]{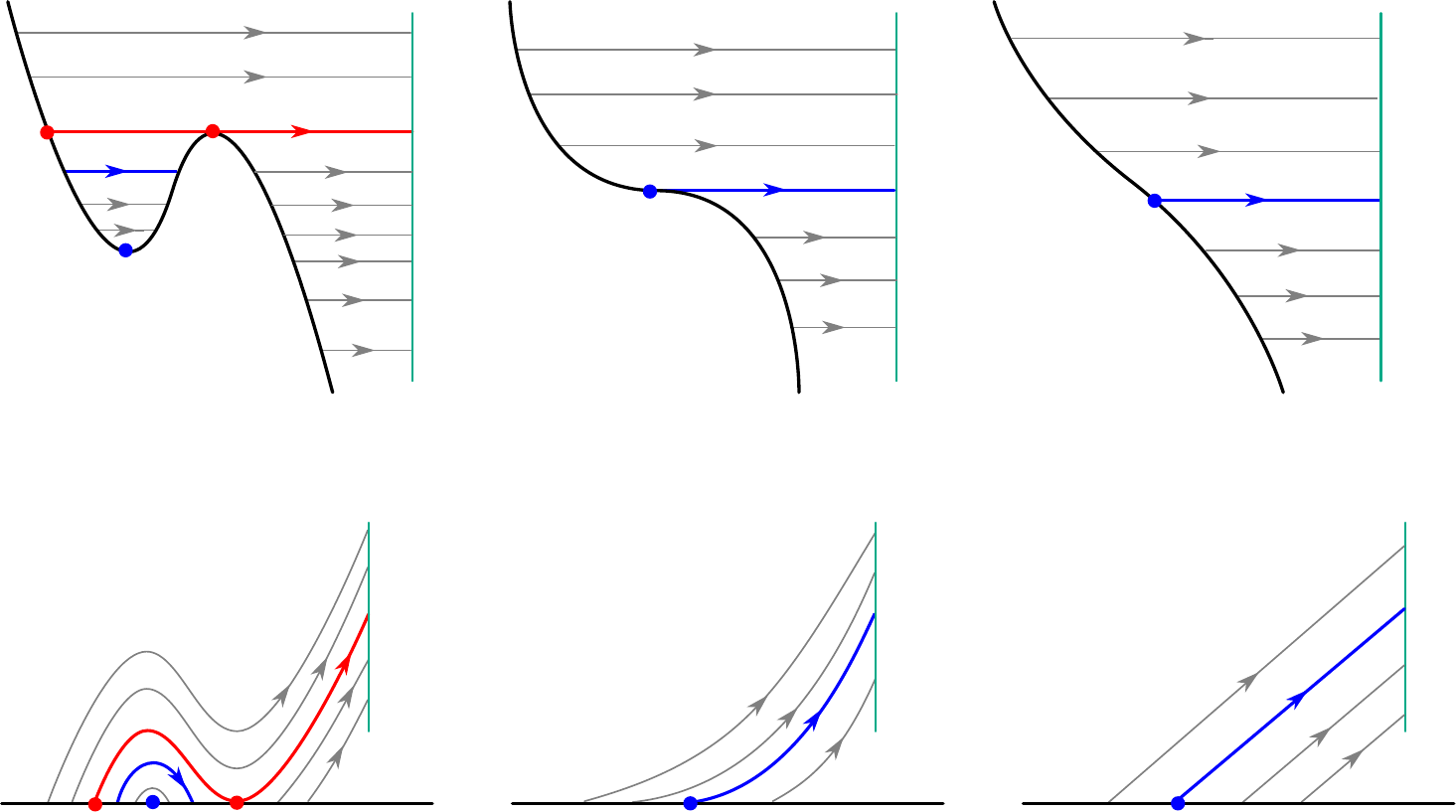}
		\put(11,-5){{\footnotesize $\lambda_1(Z)>0$}}		
		\put(45,-5){{\footnotesize $\lambda_1(Z)=0$}}		
		\put(78,-5){{\footnotesize $\lambda_1(Z)<0$}}	
		\put(28,-2){{\footnotesize $\s$}}
		\put(19,30){{\footnotesize $\s$}}		
		\put(26,18){{\footnotesize $\tau^u$}}
		\put(61,18){{\footnotesize $\tau^u$}}
		\put(97,18){{\footnotesize $\tau^u$}}
		\put(29,53){{\footnotesize $\tau^u$}}
\put(62,53){{\footnotesize $\tau^u$}}
\put(96,53){{\footnotesize $\tau^u$}}				
		\put(3,-2){{\scriptsize $A(\lambda_1)$}}
		\put(9,-2){{\scriptsize $I(\lambda_1)$}}
		\put(15,-2){{\scriptsize $V(\lambda_1)$}}
		\put(47,-2){{\scriptsize $\vec{0}$}}
				\put(80,-2){{\scriptsize $\vec{0}$}}
		\put(-5,10){{\footnotesize $(b)$}}
\put(-5,40){{\footnotesize $(a)$}}			
	\end{overpic}
	\bigskip
	\vspace{0.2cm}
	\caption{Unfolding of the regular-cusp singularity in the coordinate system $(a)$ $(\overline{x},\overline{y})$ and $(b)$ $(x,y)$.}	\label{cusp_unf_fig}
\end{figure} 				
\begin{enumerate}[i)]
	\item If $\lambda_1<0$, all the orbits of $X$ are transversal to $\s$, Therefore, $\sigma=(-\e,\e)$;
	\item If $\lambda_1=0$, $\sigma=(-\e,\e)$ (see Corollary \ref{charac_sigma});
	\item If $\lambda_1>0$, then $\cg_{\lambda_1(Z)}(\bar x)$ has a minimum at $I(\lambda_1)=-\sqrt{-\dfrac{\lambda_1}{3\kappa}}+ \er_1(\lambda_1)$  and a maximum at $V(\lambda_1)=\sqrt{-\dfrac{\lambda_1}{3\kappa}}+ \er_1(\lambda_1)$. Therefore, $X$ has a visible regular-fold singularity at $V(\lambda_1)$  and an invisible regular-fold singularity at $I(\lambda_1)$. In addition, the orbit passing through the visible regular-fold singularity intersects $\s$ backward in time at a point $A(\lambda_1)<I(\lambda_1)$. This means that $\sigma(Z)=(-\e,A(\lambda_1)]\cup[V(\lambda_1),\e)$ and $A(\lambda_1),V(\lambda_1)\rightarrow 0$ as $\lambda_1\rightarrow 0^+$.			
\end{enumerate}

 From the discussion above we have the following result.
\begin{theorem}\label{regcuspcycle}
 	Let $Z_{0}$ be a nonsmooth vector field having a C-attracting $\s$-polycycle $\Gamma_0$ containing a unique regular-cusp singularity.
 	Therefore, there exists an annulus $\mathcal{A}_0$ around $\Gamma_0$ such that for each annulus $\mathcal{A}$, with $\Gamma_0\subset\mathcal{A}\subset \mathcal{A}_0,$ there exist neighborhoods $\V\subset\Or$ of  $Z_0$ and $V\subset\R^2$ of $(0,0)$, a surjective function  $(\beta,\lambda_1):\V\rightarrow V,$ with $(\beta,\lambda_1)(Z_0)=(0,0),$ and three smooth functions $\overline{A},\overline{V},\overline{I}:\V\rightarrow R$ with $\overline{A}(Z_0)=\overline{V}(Z_0)=\overline{I}(Z_0)=0,$ for which the following statements hold inside $\mathcal{A}$.
 	\begin{enumerate}
 		\item If $\lambda_1<0$, then $Z$ has a unique crossing limit cycle of $Z$, which is hyperbolic attracting. 
 		\item If $\lambda_1=0$ and $\beta(Z)\neq 0$,  then $Z$ has  a unique crossing limit cycle of $Z$, which is hyperbolic attracting. 
 		\item If $\lambda_1=\beta=0$, then  $Z$ has a unique $\s$-polycycle, containing a unique regular-cusp singularity of $Z$, which is C-attracting. 			
 		\item If $\lambda_1>0$ and $\beta>\overline{V}$, then  $Z$ has a unique crossing limit cycle of $Z$, which is hyperbolic attracting. 
 		\item If $\lambda_1>0$ and $\beta=\overline{V}$, then  $Z$ has a unique $\s$-polycycle, containing a visible unique regular-fold singularity, which is C-attracting. 
 		\item If $\lambda_1>0$ and $\overline{V}<\beta<\overline{I}$, then  $Z$ has a sliding cycle containing a visible regular-fold singularity.
 		\item If $\lambda_1>0$ and $\overline{I}=\beta(Z)$, then  $Z$ has a  sliding cycle containing a visible regular-fold singularity and an invisible regular-fold  singularity.
 		\item If $\lambda_1>0$ and $\overline{A}<\beta<\overline{I}$, then  $Z$ has a sliding cycle containing a unique visible regular-fold singularity.				
 		\item If $\lambda_1>0$ and $\beta=\overline{A}$, then  $Z$ has a unique $\s$-polycycle, containing a unique regular-fold singularity, which is C-attracting. 
 		\item If $\lambda_1>0$ and $\overline{A}<\beta$, then  $Z$ has a unique crossing limit cycle of $Z$, which is hyperbolic attracting. 
 	\end{enumerate}	
 In addition, 
 \begin{equation}\label{AVI}
 \begin{array}{rl}
 \overline{A}=&\widetilde{d} A(\lambda_1)+\er_{2}(\lambda_1,A(\lambda_1)),\vspace{0.2cm}\\
  \overline{V}=&\widetilde{d}\sqrt{-\dfrac{\lambda_1}{3\kappa}}+ \er_1(\lambda_1),\vspace{0.2cm}\\
   \overline{I}=&-\widetilde{d}\sqrt{-\dfrac{\lambda_1}{3\kappa}}+\er_1(\lambda_1),
   \end{array}
   \end{equation}
   where $A(\lambda_1)$ and $V(\lambda_1)$ are defined as the extrema of  $\sigma(Z)$ as follows $\sigma(Z)=(-\e,A(\lambda_1)]\cup[V(\lambda_1),\e)$. 
 \end{theorem}
  
 The theorem above provides the bifurcation diagram of $Z_0$ in the $(\beta,\lambda_1)$-parameter space (see Figure \ref{bifcr1}). 
 
 \begin{proof}
 From the construction of the crossing system \eqref{algsystem}, performed in Section \ref{descripregtan}, we get the existence of an annulus $\mathcal{A}_0$ around $\Gamma_0$ and neighborhoods $\V_0\subset\Or$ of  $Z_0$ and $V_0\subset\R^2$ of $(0,0)$, for which the equation \eqref{cusp_disp} is well defined.
 	
	Now, given an annulus $\mathcal{A}$, with $\Gamma_0\subset\mathcal{A}\subset \mathcal{A}_0,$ let $\e>0$ satisfy $(-\e,\e)\times \{0\}\subset \mathcal{A}$. Consider the function $\mathcal{X}:\V_0\times V_0\rightarrow (-\e,\e)$ given by \eqref{zerogeral}, and for a sufficiently small neighborhood $U\subset \rn
	2$ of the origin, define $\mathcal{B}:\V\times U\times (-\e,\e)\rightarrow \R$ by
	$$\mathcal{B}(Z,\bg,\lambda_1,v)=\mathcal{X}(Z,\bg,\lambda_1-\widetilde{d}(Z))-v= -\dfrac{\bg}{\lambda_1-\widetilde{d}(Z)}-v+\er_2(\bg).$$
	Notice that 
$$\mathcal{B}(Z_0,0,0, 0)=0,\quad \text{and}\quad \partial_\bg\mathcal{B}(Z_0,0,0, 0)=\dfrac{1}{\widetilde{d}(Z_0)}\neq 0.$$	
	From the Implicit Function Theorem for Banach Spaces, there exist $\dg>0$, an open interval $J$ containing $0,$ and a unique $\Cr$ function $\beta^*:\V\times J\times (-\e,\e)\rightarrow (-\dg,\dg)$ such that $\mathcal{B}(Z,\bg,\lambda,v)=0$ if, and only if $\bg=\beta^*(Z,\lambda_1,v)$.
	Also, we can see that
	$$\beta^*(Z,\lambda_1,v)=\widetilde{d}(Z)v-\lambda_1 v+\er_2(v).$$

	Notice that, if $\overline{A}(Z)=\beta^*(Z,\lambda_1(Z),A(\lambda_1(Z)))$ and $\overline{V}(Z)=\beta^*(Z,\lambda_1(Z),V(\lambda_1(Z))),$ then $\mathcal{X}^*(Z,\overline{A}(Z),\lambda_1(Z))=A(\lambda_1(Z))$ and $\mathcal{X}^*(Z,\overline{V}(Z),\lambda_1(Z))=V(\lambda_1(Z))$.	 Since 
	\[
	V(\lambda_1(Z))=\sqrt{-\dfrac{\lambda_1(Z)}{3\kappa(Z)}}+ \er_1(\lambda_1(Z)),
	\] 
	we get $\overline{V}(Z)$ from \eqref{AVI}.	
	
	From construction of the maps $T^u(Z)$, $T^s(Z)$ and $D(Z)$ given in \eqref{Tu} and \eqref{invDzTs}, it follows that the points $I(\lambda_1(Z))$ and $V(\lambda_1(Z))$ are connected by an orbit of $Z=(X,Y)$ if, and only if,
$$G(Z)=:T^u(Z)(V(\lambda_1(Z)))-[D(Z)]^{-1}\circ T^s(Z)(I(\lambda_1(Z)))=0.$$
Notice that
$$G(Z)=\bg(Z)+\widetilde{d}(Z)\sqrt{-\dfrac{\lambda_1(Z)}{3\kappa(Z)}}+\er_1(\lambda_1(Z)).$$
Thus, applying the Implicit Function Theorem to the function $\mathcal{G}:\V\times  (-\dg,\dg)\rightarrow \rn{},$ given by $\mathcal{G}(Z,\bg):=G(Z)-\bg(Z)+\bg,$ at the point $(Z_0,0)$, we get a unique $\Cr$ function $\overline{I}:\V\rightarrow (-\dg,\dg)$ such that $\mathcal{G}(Z,\bg)=0$ if, and only if, $\bg=\overline{I}(Z)$. Hence, the points $V(\lambda_1(Z))$ and $A(\lambda_1(Z))$ are connected by an orbit of $Z$ if, and only if $\bg(Z)=\overline{I}(Z)$.
In this case,
$$\overline{I}(Z)=-\widetilde{d}(Z)\sqrt{-\dfrac{\lambda_1(Z)}{3\kappa(Z)}}+\er_1(\lambda_1(Z)).$$

From here, the proof follows directly from the definitions of the curves $\overline{A}, \overline{V}$ and $\overline{I}$, and Propositions \ref{stab_prop} and \ref{ngeral_prop}.
\end{proof}

In order to illustrate these results one provides a practical model realizing such bifurcation diagram.  \begin{example}\label{example1}
		Consider the Filippov vector field 
		\begin{equation}\label{model-cusp}
		Z_{\lambda_1,\beta}(x,y)=\left\{  \begin{array}{ll}
			X_{\lambda_1}(x,y)=(1,-4x^3+6x^2+2\lambda_1(x-1))^T,&y>0,\\
			Y_{\beta}(x,y)=(-1,2(1-x)-\beta)^T,&y<0.
		\end{array}  \right.
	\end{equation}
Notice that this is a piecewise Hamiltonian vector field, with Hamiltonian maps give, respectively,  by 
\[H^+_{\lambda_1}(x,y)= (x^3+\lambda_1x)(x-2)+y  \quad \mbox{ and } \quad H^-_{\beta}(x,y)= x(x-2+\beta)-y.\]
The vector field \eqref{model-cusp} satisfies:
\begin{enumerate}[(i)]
	\item $X_0$ has a cusp point at the origin and an invisible fold point at \((3/2,0)\);
	\item the trajectory of $X_0$ through the origin crosses \(\s=\{y=0\}\) again at \((2,0)\) and this arc is contained in \(\s^+=\{y>0\}\);
	\item the parameter \(\lambda_1\) unfolds the cusp point of \(X_0\) by creating two fold points for \(\lambda_1>0\);
	\item $Y_0$ has an invisible fold point at \((1,0)\) and its trajectory through \((2,0)\)  crosses \(\s\) again at the origin and this arc contained in \(\s^-=\{y<0\}\). It means that $Z_{0,0}$ has a \(\s\)-polycycle through a unique cusp-regular point; 
	\item the parameter \(\beta\) changes the position of the fold point of \(Y_{\beta}\) and this motion breaks the connection of the \(\s\)-polycycle;
	\item by changing the parameters \(\lambda_1\) and \(\beta\) it is possible to obtain all configurations showed at the proposed bifurcation diagram at Figure \ref{bifcr1}.
\end{enumerate}

\end{example}

\subsection{$\s$-Polycycles having several regular-tangential singularities}\label{polyn+_sec}

\

	Now we perform an analysis of a class of $\s$-polycycles having several regular-tangential singularities and we obtain similar results for those in Section \ref{polyn_sec}.  Consider the class of nonsmooth vector fields $Z_0=(X_0,Y_0)$ which admit a $\s$-polycycle having $k$ regular-tangential singularities, $p_i\in\s$, of multiplicity $n_i$, $i=1,\ldots,k$ satisfying the following property:
	\begin{enumerate}[(A)]
		\item for each $i=1,\ldots,k$, there exists a curve $\gamma_i$ connecting $p_i$ and $p_{i+1}$, oriented from $p_i$ to $p_{i+1}$, 
		such that $\gamma_i\setminus\{p_i,p_{i+1}\}$ is a regular orbit of $Z_0$, $\gamma_i$ is tangent to $\s$ at $p_i$ and transversal to  $\s$  at $p_{i+1}$, where $p_{k+1}=p_1$ (see Figure \ref{cycletangreg+_fig}).
	\end{enumerate}

In what follows, without loss of generality, we assume that  $h(x,y)=y$, $p_1=(0,0)$, and  $p_i=(a_i,0)$, $i=2,\cdots,k$.

\begin{figure}[h!]
	\centering
	\bigskip
	\begin{overpic}[width=12cm]{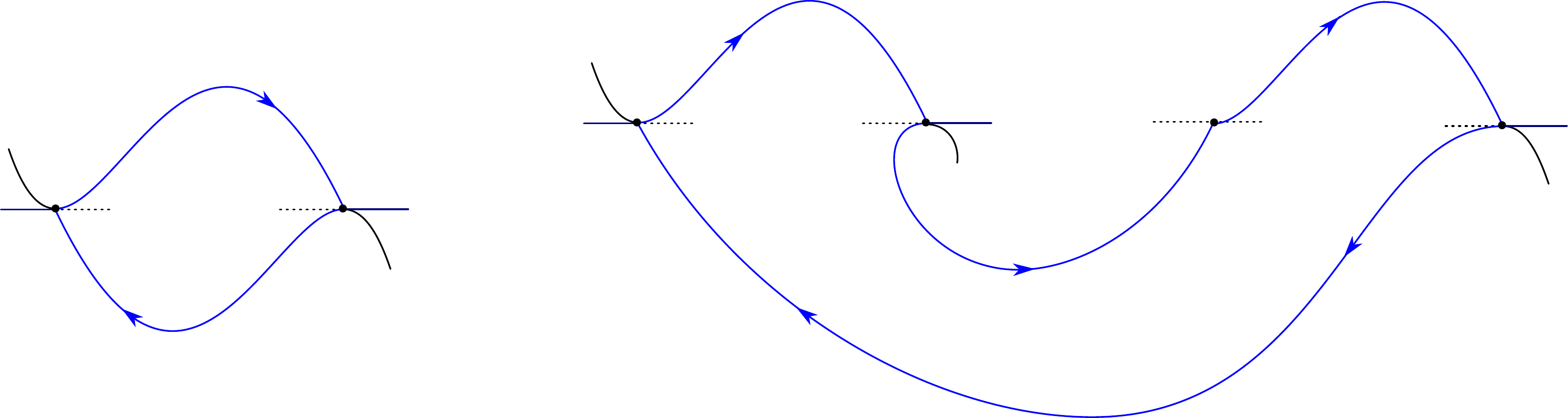}
		\put(1.5,11){{\footnotesize $p_1$}}
		\put(-5,12){{\footnotesize $\s$}}
		\put(20,11){{\footnotesize $p_2$}}
		\put(58,17){{\footnotesize $p_2$}}
		\put(42,17){{\footnotesize $p_1$}}
		\put(78,17){{\footnotesize $p_3$}}	
		\put(94,17){{\footnotesize $p_4$}}		
	\end{overpic}
	\bigskip
	\caption{$\s$-polycycles satisfying hypothesis $(A)$.}	\label{cycletangreg+_fig}
\end{figure} 
	
	Following the constructions presented in Sections \ref{descripregtan} and \ref{polyn_sec} the displacement functions $\Delta_i:\sigma_i\rightarrow \R$ are given by
	
	\begin{equation*}
		\Delta_{i}(h^i(x_{i}),h^{i+1}(x_{i+1}))= \beta_{i}+\widetilde{\lambda}_i x^s_{i+1} +\er_{2}(x_i^u) +\er_{2}(x_{i+1}^s),
	\end{equation*} 
	where $x_i^{s,u}=x_{i}-a_{i}$, $\beta_{i}=\Delta_{i}(h^i(a_{i}),h^{i+1}(a_{i+1}))$ satisfies $\beta_{i}(Z_0)=0,$ and $\widetilde{\lambda}_i=\lambda^i_{1}-\widetilde{d}_i$ satisfies $\widetilde{\lambda}_i(Z_0)=-\widetilde{d}_i(Z_0)\neq0$.
	Thus, there exists a neighborhood $\V$ of $Z_0$ such that for each $Z\in\V$ and $i=1,\ldots,k$,  $\widetilde{\lambda}_i=-\widetilde{d}_i\neq0$ and the crossing system \eqref{algsystem} is given by
	
	\begin{equation}\label{dispsystem+}
		\left\{
		\begin{array}{l}
			\vspace{0.2cm}\Delta_{1}(h^1(x_{1}),h^{2}(x_{2}))= \beta_{1}+\widetilde{\lambda}_1 x^s_{2} +\er_{2}(x_1^u) +\er_{2}(x_{2}^s)=0,\\
			\vspace{0.2cm}\Delta_{2}(h^2(x_{2}),h^{3}(x_{3}))= \beta_{2}+\widetilde{\lambda}_2 x^s_{3} +\er_{2}(x_2^u) +\er_{2}(x_{3}^s)=0,\\
			\vdots\\
			\vspace{0.2cm}\Delta_{k-1}(h^{k-1}(x_{k-1}),h^{k}(x_{k}))= \beta_{k-1}+\widetilde{\lambda}_{k-1} x^s_{k} +\er_{2}(x_{k-1}^u) +\er_{2}(x_{k}^s)=0,\\
			\vspace{0.2cm}\Delta_{k}(h^k(x_{k}),h^{1}(x_{1}))= \beta_{k}+\widetilde{\lambda}_k x^s_{1} +\er_{2}(x_k^u) +\er_{2}(x_{1}^s)=0,\\
			\vspace{0.2cm}x_{i}^{s,u}=x_i-a_i,\ i=1,\cdots,k,\\
			h^i(x_{i})\in\sigma_{i},\ i=1,\cdots,k.
		\end{array}
		\right.
	\end{equation}

	So for the $\s$-polycycle $\Gamma_0$ we have the following proposition.
	
	\begin{prop}\label{stab+_prop}
		Let $\Gamma_0$ be a $\s$-polycycle having $k$ regular-tangential singularities $p_i\in\s$, of multiplicity $n_i$, $i=1,\ldots,k,$ satisfying the property $(A)$. Then, $\Gamma_0$ attracts the orbits passing through the section $\sigma_1(Z_0)$ (domain of $T_1^u(Z_0)$).  In this case, we say that $\Gamma_0$ is C-attracting.
	\end{prop}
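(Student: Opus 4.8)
The plan is to write the first return map $\mathcal{P}_0$ of $Z_0$ associated with $\Gamma_0$ on the section $\sigma_1(Z_0)$ (the domain of $T_1^u(Z_0)$) as a cyclic composition of the transfer functions and connecting diffeomorphisms from Sections \ref{descripregtan} and \ref{polyn_sec}, and then to check that $\mathcal{P}_0$ is flat of order $n_1 n_2\cdots n_k\ge 2$ at $p_1$. This makes $\mathcal{P}_0$ a strong contraction near $p_1$, and the conclusion follows exactly as in the proof of Proposition \ref{stab_prop}.

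For each $i\in\{1,\dots,k\}$ I would set
\[
B_i:=\big([D_i(Z_0)]^{-1}\circ T_{i+1}^s(Z_0)\big)^{-1}\circ T_i^u(Z_0),
\]
a map carrying a neighborhood of $p_i=a_i$ in $\sigma_i(Z_0)$ into a neighborhood of $p_{i+1}=a_{i+1}$ in $\sigma_{i+1}(Z_0)$ (indices mod $k$, $a_{k+1}=a_1$); since two points of $\sigma_i(Z_0)$ and $\sigma_{i+1}(Z_0)$ are connected by an orbit of $Z_0$ exactly when $\Delta_i(Z_0)$ vanishes on them (Definition \ref{disp_def}), the first return map on $\sigma_1(Z_0)$ equals $\mathcal{P}_0=B_k\circ B_{k-1}\circ\cdots\circ B_1$. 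By hypothesis $(A)$, at $p_i$ the component of $Z_0$ carrying $\gamma_i$ has an $n_i$-order contact with $\s$, so \eqref{Tu} (recall $h_{Z_0}^i=\mathrm{Id}$ and that the unfolding parameters $\lambda^i_j(Z_0)$ vanish, by Theorem \ref{charac2}) gives, in a parametrization of $\tau_i^u$ centered at $q_i^u$, $T_i^u(Z_0)(x)=\kappa_i(Z_0)(x-a_i)^{n_i}+\er_{n_i+1}(x-a_i)$ with $\kappa_i(Z_0)\neq 0$; meanwhile $\gamma_i$ reaches $p_{i+1}$ transversally, so $T_{i+1}^s(Z_0)$ is the restriction of a germ of diffeomorphism and, by \eqref{invDzTs}, $[D_i(Z_0)]^{-1}\circ T_{i+1}^s(Z_0)$ is a germ of diffeomorphism taking $a_{i+1}$ to $q_i^u$ with derivative $\widetilde d_i(Z_0)\neq 0$. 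Composing, $B_i(x)=a_{i+1}+\dfrac{\kappa_i(Z_0)}{\widetilde d_i(Z_0)}(x-a_i)^{n_i}+\er_{n_i+1}(x-a_i)$.

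Substituting and tracking leading orders, a short induction should yield $B_i\circ\cdots\circ B_1(x)-a_{i+1}=c_i(x-a_1)^{n_1 n_2\cdots n_i}+\er_{n_1\cdots n_i+1}(x-a_1)$ with $c_i\neq 0$, because composing on the left by $B_{i+1}$ raises the current exponent to its $n_{i+1}$-th power and creates no lower-order term. For $i=k$ this gives $\mathcal{P}_0(x)=c(x-a_1)^{N}+\er_{N+1}(x-a_1)$ with $N:=n_1 n_2\cdots n_k$ and $c\neq 0$; since each $n_i\ge 2$ one has $N\ge 2^k\ge 2$, so there is $\e>0$ with $|\mathcal{P}_0(x)-a_1|<|x-a_1|$ for $x\in\sigma_1(Z_0)$, $0<|x-a_1|<\e$, whence $\mathcal{P}_0^m(x)\to p_1$ and every orbit of $Z_0$ through $\sigma_1(Z_0)$ accumulates on $\Gamma_0$; that is, $\Gamma_0$ is C-attracting.

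The part I expect to require the most care is not this leading-order computation but the verification that $\mathcal{P}_0$ is well defined on an entire (one- or two-sided, according to the parities of the $n_i$) neighborhood of $p_1$ in $\sigma_1(Z_0)$ and maps it back into $\sigma_1(Z_0)$; this amounts to checking that each partial image $B_i\circ\cdots\circ B_1(x)$ lies in the domain of $B_{i+1}$, which follows from continuity of the $B_i$ together with the fact that the points $a_i$, $q_i^u$, $q_i^s$ defining $\Gamma_0$ are carried onto one another by the corresponding maps, but should be written out carefully.
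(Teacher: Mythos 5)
Your proof is correct and follows essentially the same route as the paper's: both write the first return map on $\sigma_1(Z_0)$ as the cyclic composition of the maps $\big([D_i(Z_0)]^{-1}\circ T_{i+1}^s(Z_0)\big)^{-1}\circ T_i^u(Z_0)$, expand each factor using \eqref{Tu} and \eqref{invDzTs}, and conclude that $\mathcal{P}_0$ is flat of order at least $2$ at $p_1$, hence satisfies $|\mathcal{P}_0(x)-a_1|<|x-a_1|$ nearby. The only (immaterial) difference is in the stated leading order: you correctly obtain the exponent $n_1n_2\cdots n_k$ for the composition, whereas the paper's displayed formula reads as if the exponents added up to $n_1+\cdots+n_k$; since every $n_i\ge 2$, both versions give flatness of order at least $2$ and the conclusion is unaffected.
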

 \begin{proof}
			Notice that the first return map associated with the $\s$-polycycle $\Gamma_0$ of $Z_0$ is given by $$\mathcal{P}_0(x)=\big([D_k]^{-1}\circ T_1^s\big)^{-1}\circ T_k^u\circ\big([D_{k-1}]^{-1}\circ T_k^s\big)^{-1}\circ T_{-1}^u\circ\cdots\circ\big([D_1]^{-1}\circ T_2^s\big)^{-1}\circ T_1^u(x)$$ 
			where, from \eqref{Tu} and \eqref{invDzTs} (recall that $h^ 1_{Z_0}=Id$),
			\[
			T^u_i(x)=\kappa_ix^{n_i}+\er_{n_i+1}(x)  \quad\text{and}\quad [D_{i-1}]^{-1}\circ T_i^s(x)=\widetilde d_i\, x+\er_{2}(x).
			\]
			Hence, 
			\[
			\mathcal{P}_0(x)=\prod_{i=1}^{k}\dfrac{\kappa_i}{\widetilde{d}_i}\,x^{n_i}+\er_{N}(x),\quad  N=n_1+n_2+\cdots+ n_k+1.
			\]
			Therefore, for $|x|$ small enough, $|\mathcal{P}_{0}(x)|<|x|$, which means that $\Gamma_0$ attracts the orbits passing through the section $\sigma_1(Z_0)$ (domain of $T_1^u(Z_0)$).  
	\end{proof}
	
	Set $\eta=(\beta,\widetilde{\lambda})$ with $\beta=(\beta_1,\ldots,\beta_k)$ and $\widetilde\lambda=(\widetilde{\lambda}_1,$ $\ldots,\widetilde{\lambda}_k)$, and denote $\widetilde{d}=(\widetilde{d}_1,\ldots,\widetilde{d}_k)$. Notice that $\eta:\V\rightarrow V$ is surjective onto a neighborhood of $(0,-\widetilde{d}(Z_0))\in V$.
	Now, we present the main result of this section which is an extension of the Proposition \ref{ngeral_prop}.
	
	\begin{prop} \label{ngeral+_prop}
		Let $\Gamma_0$ be a $\s$-polycycle  of $Z_0=(X_0,Y_0)\in\Or$ having $k$ regular-tangential singularities $p_i\in\s$, of multiplicity $n_i$, $i=1,\ldots,k,$ satisfying property $(A)$. Then, the following statements hold.
		\begin{enumerate}[i)]
			\item There exists an annulus $\mathcal{A}_0$ at $\Gamma_0$ and a neighborhood $\V$ of $Z_0$ such that each $Z\in\V$ has at most one crossing limit cycle bifurcating from $\Gamma_0$ in $\mathcal{A}_0$, which is hyperbolic attracting.
			\item Let $Z_{\beta,\widetilde{\lambda}}$ be a continuous $2k$-parameter family in $\V$ such that $Z_{0,-\widetilde{d}(Z_0)}=Z_0$ and satisfying $Z_{\beta,\widetilde{\lambda}}\in\eta^{-1}(\beta,\widetilde{\lambda}),$ for every $(\beta,\widetilde{\lambda})\in V$. Then, for each $\widetilde{\lambda}$ near $-\widetilde{d}(Z_0)$ and for each connected component  $C_i$ of $\sigma_i(Z_{\beta,\widetilde{\lambda}})\cap \mathcal{A}_0$, there exist non-empty open intervals $I_{\widetilde{\lambda},C_i}$, satisfying $I_{\widetilde{\lambda},C_1}\times \cdots\times I_{\widetilde{\lambda},C_k}\times\{\widetilde{\lambda}\}\subset V$, such that $Z_{\bg,\lambda}$ has a hyperbolic attracting crossing limit cycle passing through $C_1 \times \cdots \times C_k$, for each $\bg\in I_{\widetilde{\lambda},C_1}\times \cdots\times I_{\widetilde{\lambda},C_k}$.
		\end{enumerate}	
	\end{prop}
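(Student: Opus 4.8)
The plan is to reduce the $2k$-dimensional system \eqref{dispsystem+} to a single scalar equation and then argue exactly as in the proof of Proposition \ref{ngeral_prop}. First I would solve the system by back-substitution: since for $Z\in\V$ each $\widetilde\lambda_i(Z)=-\widetilde d_i(Z)\neq 0$, the $i$-th equation $\beta_i(Z)+\widetilde\lambda_i(Z)x^s_{i+1}+\er_2(x_i^u)+\er_2(x^s_{i+1})=0$ can be solved, via the Implicit Function Theorem for Banach spaces, for $x_{i+1}^s$ as a $\Cr$ function of $x_i^u$ and $Z$, with $x_{i+1}^s=-\beta_i(Z)/\widetilde\lambda_i(Z)+\er_2(x_i^u)+\er_1(\beta_i(Z))$ (here I use $x_i^s$ and $x_i^u$ are related to the same base point $x_i$ through the diffeomorphisms $h_Z^i$, so $x_i^u$ determines $x_i$ and hence $x_i^s$). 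Composing these $k$ relations cyclically, and invoking the first return map computed in Proposition \ref{stab+_prop}, namely $\mathcal P_0(x)=\prod_{i=1}^k \frac{\kappa_i(Z_0)}{\widetilde d_i(Z_0)} x^{n_i}+\er_N(x)$, yields a single scalar displacement equation $\widetilde\Delta(Z)(x_1)=0$ whose linear part at $x_1=0$ has nonzero coefficient $\prod_{i=1}^k(-\widetilde\lambda_i(Z))/(-\widetilde d_i(Z))\cdot(\text{units})$, in particular nonzero at $Z_0$.

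Once this scalar reduction is in place, item $(i)$ follows verbatim from the argument of Proposition \ref{ngeral_prop}: define a $\Cr$ map $\mathcal F$ on $\V\times V\times(-\e,\e)$ whose zero set, by the Implicit Function Theorem, is the graph of a unique $\Cr$ function $\mathcal X(Z,\beta,\widetilde\lambda)$, show $\mathcal X(Z,0,\widetilde\lambda)=0$ so that $\mathcal X(Z,\beta,\widetilde\lambda)$ is $\er_1(\beta)$ with leading term linear in the $\beta_i$, and conclude that the composed displacement equation has at most one zero $\mathcal X^*(Z)$ in a fixed small interval. Hyperbolicity and attractivity of the corresponding crossing limit cycle come from differentiating the composed return map: its derivative at the fixed point is close to $\prod_i \kappa_i(Z_0)/\widetilde d_i(Z_0)\cdot(\cdots)$ times higher-order terms, which has modulus strictly less than $1$ for $Z$ near $Z_0$ by the computation in Proposition \ref{stab+_prop}, so the cycle inherits the C-attractivity of $\Gamma_0$; one then sets $\mathcal A_0=\{p\in M:\ d(p,\Gamma_0)<\e\}$.

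For item $(ii)$, given the family $Z_{\beta,\widetilde\lambda}$ with $Z_{\beta,\widetilde\lambda}\in\eta^{-1}(\beta,\widetilde\lambda)$, the unique zero of the scalar displacement function becomes an explicit $\Cr$ function $x_1^*(\beta,\widetilde\lambda)$ of the parameters, and back-substitution recovers $x_i^*(\beta,\widetilde\lambda)$ for $i=2,\dots,k$, each of the form $x_i^*=-\beta_{i-1}/\widetilde\lambda_{i-1}+(\text{h.o.t.})+(\text{contributions from }\beta_1,\dots,\beta_{i-2})$; in particular the map $\beta\mapsto(x_1^*,\dots,x_k^*)$ is, for $\widetilde\lambda$ near $-\widetilde d(Z_0)$, a $\Cr$ local diffeomorphism from a neighbourhood of the origin in $\R^k$ (since its linear part is, up to reordering, lower-triangular with nonzero diagonal entries $-1/\widetilde\lambda_i$). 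Hence for each choice of connected components $C_i$ of $\sigma_i(Z_{\beta,\widetilde\lambda})\cap\mathcal A_0$ the preimage $I_{\widetilde\lambda,C_1}\times\cdots\times I_{\widetilde\lambda,C_k}$ of $\mathrm{int}(C_1)\times\cdots\times\mathrm{int}(C_k)$ under this diffeomorphism is a non-empty open box in parameter space, contained in $V$, on which $x_i^*(\beta,\widetilde\lambda)\in\mathrm{int}(C_i)$ for all $i$; by item $(i)$ this corresponds to a hyperbolic attracting crossing limit cycle through $C_1\times\cdots\times C_k$.

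The main obstacle is the careful bookkeeping in the back-substitution: one must verify that each intermediate map $x_i^u\mapsto x_{i+1}^s$ is genuinely well defined on a fixed neighbourhood independent of $Z\in\V$ (this is where uniformity of the Implicit Function Theorem over the compact pieces of $\Gamma_0$, established in Sections \ref{trans-sec}--\ref{descripregtan}, is used), that the composed error terms remain $\er_2$ after passing through the diffeomorphisms $h_Z^i$, and that the linearization of the parameter-to-intersection map is invertible — i.e. that no resonance among the $\widetilde\lambda_i$ degrades the triangular structure. All of these are routine but must be tracked so that the constants ($\e$, the size of $\V$ and $V$) can be chosen once and for all.
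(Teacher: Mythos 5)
Your proposal is correct and follows essentially the same route as the paper: the published proof applies the Implicit Function Theorem once to the full $k$-dimensional map $\mathcal{F}=(\mathcal{F}_1,\ldots,\mathcal{F}_k)$, whose Jacobian $D_x\mathcal{F}(Z_0,0,-\widetilde{d}(Z_0),0)$ is the invertible cyclic matrix with entries $-\widetilde{d}_i(Z_0)$, rather than your iterated scalar back-substitution, but the two are interchangeable and rest on the same non-degeneracy $\widetilde{\lambda}_i(Z_0)=-\widetilde{d}_i(Z_0)\neq 0$. For hyperbolicity and for item $(ii)$ the paper does exactly what you propose: it builds the chain $\xi_i(x_1)$ by back-substitution to obtain a scalar displacement function with nonzero (positive) derivative at $x_1^*$, and reads off the intervals $I_{\widetilde{\lambda},C_i}$ from the explicit leading term $\mathcal{X}^*=-\big(\beta_k/\widetilde{\lambda}_k,\beta_1/\widetilde{\lambda}_1,\ldots,\beta_{k-1}/\widetilde{\lambda}_{k-1}\big)+\er_2(\beta)$.
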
		
	\begin{proof}
		As seen before, there exists a neighborhood $\mathcal{V}$ of $Z_0$ in $\Or$ such that, for each $Z=(X,Y)\in\V$, we associate the displacement functions $\Delta_i(Z)$, $i=1,\ldots,k$, given in \eqref{dispsystem+}, which can be extended to $\widetilde{\Delta}_i(Z):(-\e,\e)\rightarrow \R$ (see Section \ref{disp_sec}).

		Define the $\Cr$ function $\mathcal{F}: \V\times V\times (-\e,\e)^k\rightarrow \R^k$ as
		$$\mathcal{F}(Z,\beta,\widetilde{\lambda},x)=\big( \mathcal{F}_1(Z,\beta,\widetilde{\lambda},x),\ldots,\mathcal{F}_k(Z,\beta,\widetilde{\lambda},x) \big),$$   
		where $V$ is an open neighborhood of $(0,-\widetilde{d}(Z_0))\in\R^{k}\times\R^{k}$ and, for $i=1,\ldots,k$, 
		$$\mathcal{F}_i(Z,\beta,\widetilde{\lambda},x)=\widetilde{\Delta}_i(Z)(h_Z^{i}(x_{i}),h_Z^{i+1}(x_{i+1}))-\beta_{i}(Z)-\widetilde{\lambda}_{i}(Z) x_{i+1}+\bg_i+\widetilde{\lambda}_i x_{i+1},$$ with $x_{k+1}=x_1$ and $h^{k+1}_Z=h^1_Z$.
		
		Notice that $\mathcal{F}(Z_0,0,-\widetilde{d}(Z_0), 0)=(0,\ldots,0) $ and
		$$D_x\mathcal{F}(Z_0,0,-\widetilde{d}(Z_0), 0)=\left(\begin{array}{ccccc}
		0 &-\widetilde{d}_2(Z_0) &  0 & \cdots  & 0 \\
		0 & 0 & -\widetilde{d}_3(Z_0) &  \cdots  & 0 \\
		\vdots & \vdots & \vdots & \ddots  &   \vdots\\
		0 & 0 & 0 & \cdots  & -\widetilde{d}_k(Z_0) \\
		-\widetilde{d}_1(Z_0) & 0  & 0& \cdots &  0
		\end{array}\right).$$
		
		From the Implicit Function Theorem for Banach Spaces and reducing $\V$ and $V$ if necessary, there exists a unique $\Cr$ function $\mathcal{X}:\mathcal{V}\times V\rightarrow (-\e,\e)^k$ such that $\mathcal{F}(Z,\beta,\widetilde{\lambda},x)=0$ if, and only if, $x=\mathcal{X}(Z,\beta,\widetilde{\lambda})$. Since 
		$$\mathcal{F}(Z,\bg,\widetilde{\lambda}, x)=\big(\bg_1+\widetilde{\lambda}_1 x_2,\ldots,\bg_{k-1}+\widetilde{\lambda}_{k-1} x_k,\bg_k+\widetilde{\lambda}_k x_{1} \big)+\er_2(x),$$
		it follows that $\mathcal{X}(Z,0,\widetilde{\lambda})=0$, for any $(Z,0,\widetilde{\lambda})\in\V\times V$. Consequently, 
		\begin{equation*}
			\mathcal{X}(Z,\bg,\widetilde{\lambda})=-\left(\dfrac{\bg_k}{\widetilde{\lambda}_k},\dfrac{\bg_1}{\widetilde{\lambda}_1},\ldots,\dfrac{\bg_{k-1}}{\widetilde{\lambda}_{k-1}}\right)+\er_2(\beta).
		\end{equation*}
		
		From the definition of the function $\mathcal{F}$, the unique zero of $\widetilde{\Delta}(Z)=\big(\widetilde{\Delta}_1(Z),\ldots,\widetilde{\Delta}_k(Z) \big)$ in $(-\e,\e)^k$ is given by
		\begin{equation}\label{zero}
			\mathcal{X}^*(Z)=\mathcal{X}(Z,\bg(Z),\widetilde{\lambda}(Z)).
		\end{equation}
		Hence, system \eqref{dispsystem+} has at most one zero in $\sigma_1(Z)\times\cdots\times\sigma_k(Z)$. Moreover, since 
		\[
		\dfrac{\partial \widetilde\Delta_i(Z_0)}{\partial x_{i+1}}(\mathcal{X}^*(Z_0))=-\widetilde d_i(Z_0)>0,
		\]
		it follows that
		\begin{equation}\label{derdelta}
		\dfrac{\partial \widetilde\Delta_i(Z)}{\partial x_{i+1}}(\mathcal{X}^*(Z))=\widetilde{\lambda}_i(Z)+\er_1(\mathcal{X}^*(Z))>0, \quad i=1,\ldots,k,
		\end{equation}
		for $Z$ sufficiently near $Z_0$.  
	
	Now, for $Z\in\V$ suppose that the solution $\mathcal{X}^*(Z)=(x_{1}^*(Z),\ldots,x_{k}^*(Z))\in\textrm{int}(\sigma_1(Z)\times\cdots\times\sigma_k(Z))$ of system \eqref{dispsystem+} is associated with a crossing limit cycle of $Z.$ From the Implicit Function Theorem,  for each $x_1$ sufficiently close to $x_{1}^*(Z)$ the orbit of $Z,$ starting at $(x_1,0)\in\sigma_1(Z)\times\{0\},$ intersects each $\textrm{int}(\sigma_i(Z))\times\{0\}$ at $(\xi_i(x_1),0)$ with $\xi_i(x_1)$ near $x_i^*(Z).$ Notice that $$\widetilde{\Delta}_i(Z)(h_Z^{i}(\xi_{i}(x_1)),h_Z^{i+1}(\xi_{i+1}(x_1)))=0,$$ for $i=1,\ldots,k-1.$ Consequently, 
\[
x_1\mapsto \widetilde{\Delta}_k(Z)(h_Z^{k}(\xi_{k}(x_1)),h_Z^{1}(x_1))
\]
is the displacement function associated with the crossing limit cycle defined in neighborhood of $x_1^*(Z)$ in $\textrm{int}(\sigma_1(Z))\times\{0\}$.  Clearly, the above displacement function vanishes at $x_1^*(Z)$. Moreover, from \eqref{derdelta}, the derivative of displacement function at $x_1^*$ is positive. Therefore, when the crossing limit cycle exists, it is hyperbolic and attracting.

		The proof of item $(i)$ follows by taking $\mathcal{A}_0=\{p\in M; d(p,\Gamma_0)<\e\}$, where $d$ denotes the Hausdorff distance.
		
		Now, consider the family $Z_{\bg,\widetilde{\lambda}}$ given in item $(ii)$. The unique zero of $\widetilde{\Delta}(Z_{\bg,\widetilde{\lambda}})$ is given by
		\begin{equation}\label{xstar+}
			\mathcal{X}^*(Z_{\beta,\widetilde{\lambda}})=-\left(\dfrac{\bg_k}{\widetilde{\lambda}_k},\dfrac{\bg_1}{\widetilde{\lambda}_1},\ldots,\dfrac{\bg_{k-1}}{\widetilde{\lambda}_{k-1}}\right)+\er_2(\bg).
		\end{equation}
		Recall that each isolated solution $x^*$ of system \eqref{dispsystem+} represents either a crossing limit cycle (if  $x^*\in\textrm{int}(\sigma_1(Z_{\bg,\widetilde{\lambda}})\times\cdots\times\sigma_k(Z_{\bg,\widetilde{\lambda}}))$) or a $\s$-polycycle (if $x^*\in\partial(\sigma_1(Z_{\bg,\widetilde{\lambda}})\times\cdots\times\sigma_k(Z_{\bg,\widetilde{\lambda}}))$). 
		So, for $i=1,\ldots,k$, let $C_i$ be a connected component of $\sigma_i(Z_{\bg,\widetilde{\lambda}})\subset(-\e,\e)$ for some fixed parameter $\widetilde{\lambda}\in\pi_{2}(V).$ Hence, from \eqref{xstar+}, there exists a non-empty open interval $I_{\widetilde{\lambda},C_i}$  such that $I_{\widetilde{\lambda},C_1}\times\ldots\times I_{\widetilde{\lambda},C_i}\times\{\widetilde{\lambda}\}\subset V$ and $\mathcal{X}^*(Z_{\beta,\widetilde{\lambda}})\in \textrm{int}(C_1\times\ldots\times C_k)$ whenever $\bg_i\in I_{\widetilde{\lambda},C_i}$.	
	\end{proof}
	
	\begin{remark}\label{remori}
		Regarding Propositions \ref{stab+_prop} and \ref{ngeral+_prop}, if we change the orientation in property $(A)$ in order to reverse the orientation of the $\s$-polycycle, all the results remain the same reversing the stability the   $\s$-polycycle and the crossing limit cycle.
	\end{remark}
	
	These results are illustrated in the next section for the case where the $\s$-polycycle has two fold-regular singularities.

\subsection{$\s$-Polycycles having two regular-fold singularities}\label{2fold-fold}

\

Firstly, without loss of generality, we assume some conditions in order to characterize the nonsmooth vector fields $Z_0=(X_0,Y_0)\in\Or$ which admit a $\s$-polycycle $\Gamma_0$ satisfying (A) and having only two regular-fold singularities (see Figure \ref{2FRcycle}). So, consider a coordinate system $(x,y)$ such that $x(p_1)=a_1,$ $y(p_1)=0$, $x(p_2)=a_2>0$, $y(p_2)=0$ and $h(x,y)=y$ in neighborhoods of $p_1$ and $p_2$. Consider the following sets of hypotheses:

\begin{itemize}
	\item [(DRF-A):]\begin{itemize}
		\item[$-$] $p_1$ is a visible regular-fold singularity of $X_0$ and $\pi_{1}\circ X_0(p_1)>0$;
		\item[$-$] $p_2$ is a visible regular-fold singularity  of $Y_0$ and $\pi_{1}\circ Y_0(p_2)<0$;			
		\item [$-$] $W^{u}_{+}(p_1)$ reaches $\s$ transversally at $p_2$;
		\item[$-$] $W^{u}_{-}(p_2)$ reaches $\s$ transversally at $p_1$
		\item[ ] 	
	\end{itemize}
	\item [(DRF-B):]\begin{itemize}
		\item[$-$] $p_1$ is a visible regular-fold singularity of $X_0$ and $\pi_{1}\circ X_0(p_1)<0$;
		\item[$-$] $p_2$ is a visible regular-fold singularity of $Y_0$ and $\pi_{1}\circ Y_0(p_2)>0$;			
		\item [$-$] $W^{u}_{+}(p_1)$ reaches $\s$ transversally at $p_2$;
		\item[$-$] $W^{u}_{-}(p_2)$ reaches $\s$ transversally at $p_1$
	\end{itemize}			
\end{itemize}

\begin{figure}[h]
	\centering
	\bigskip
	\begin{overpic}[width=10cm]{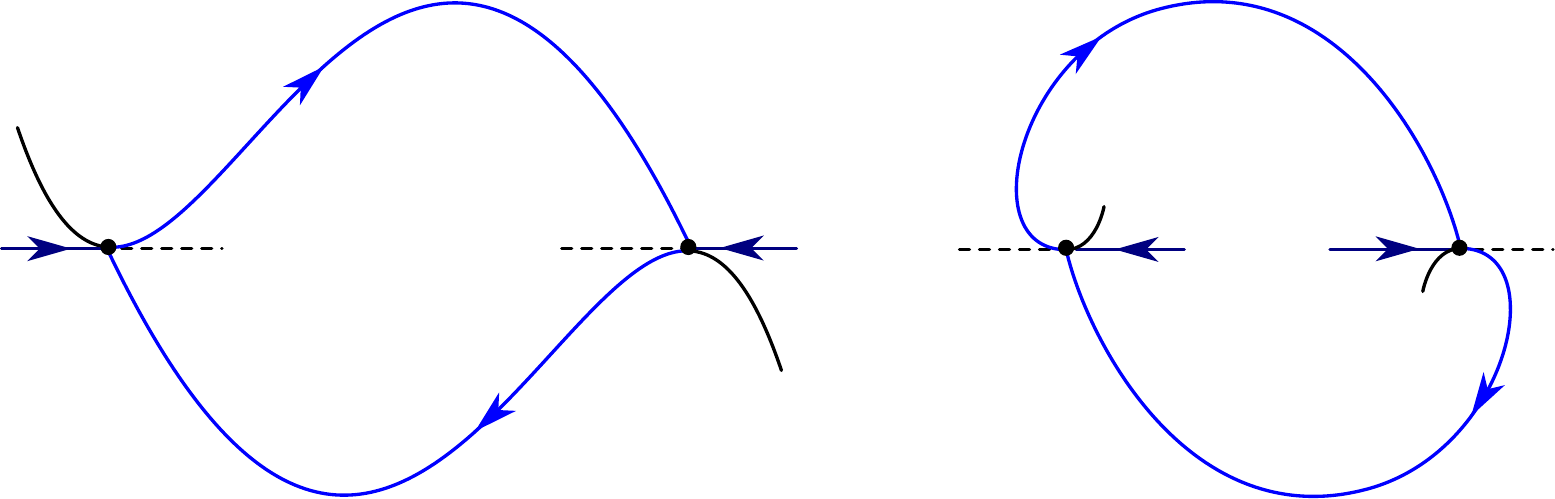}
		\put(23,-5){{\footnotesize $(a)$}}		
		\put(6,18){{\footnotesize $p_1$}}
		\put(67,18){{\footnotesize $p_1$}}
		\put(45,18){{\footnotesize $p_2$}}
		\put(94,18){{\footnotesize $p_2$}}		
		\put(83,-5){{\footnotesize $(b)$}}	
	\end{overpic}
	\bigskip
	\caption{ $\s$-polycycle $\Gamma_0$ of $Z_0$ under the set of hypotheses $(a)$ (DRF-A) and $(b)$ (DRF-B), respectively. }	\label{2FRcycle}
\end{figure}

Hypotheses (DRF-A) and (DRF-B) fix the orientation and the stability of the $\s$-polycycle $\Gamma_0$. Indeed, in this case $\Gamma_0$ is C-attracting. According to Remark \ref{remori}, the stability of $\Gamma_0$ is reversed if we change the orientation.

Here we shall assume that  $Z_0$ satisfies (DRF-A), the case (DRF-B) will follow analogously. In this case, $Z_0$ admits a $\s$-polycycle $\Gamma_0$ given by the union  $W^{u}_{+}(a_1,0)\cup W^{u}_{-}(a_2,0)\cup \{(a_1,0), (a_2,0) \}$. We shall see that  $\mathcal{S}(\Gamma_0)=2$.

Since regular-fold singularities are locally structurally stable, they persist under small perturbations. Consequently, without loss of generality, we may assume that the diffeomorphisms $h_Z^i,$ $i=1,2,$ provenient from Theorem \ref{charac2} may be taken as the identity. Accordingly, the displacement functions write
\[
\begin{array}{rl}
\Delta_1(x_1,x_2)=& T_1^u(x_1)-[D_1]^{-1}\circ T_2^s(x_2)\vspace{0.2cm}\\
=&\la_0^1+\kappa_1(x_1-a_1)^2+\er_3(x_1-a_1)\vspace{0.1cm}\\
&-\widetilde c_1-\widetilde d_1(x_2-a_2)+\er_2(x_2-a_2),\vspace{0.3cm}\\

\Delta_2(x_2,x_1)=& T_2^u(x_2)-[D_2]^{-1}\circ T_1^s(x_1)\vspace{0.2cm}\\
=&\la_0^2+\kappa_2(x_2-a_2)^2+\er_3(x_2-a_2)\vspace{0.1cm}\\
&-\widetilde c_2-\widetilde d_2(x_1-a_1)+\er_2(x_1-a_1),\\
\end{array}
\]
where $\kappa_1<0,$ $\kappa_2>0,$ and $\widetilde d_i>0,$ for $i=1,2.$ Therefore, denoting $\beta_i=\la_0^i-\widetilde c_i,$ $i=1,2,$ (see Figure \ref{splitting}) the crossing system \eqref{algsystem} becomes
\begin{equation}\label{system1}
\left\{\begin{array}{l}
\beta_1-\widetilde d_1\xi_2+\kappa_1\xi_1^2+\er_2(\xi_2)+\er_3(\xi_1)=0,\vspace{0.1cm}\\
\beta_2-\widetilde d_2\xi_1+\kappa_2\xi_2^2+\er_2(\xi_2)+\er_3(\xi_1)=0,\vspace{0.1cm}\\
\xi_i=x_i-a_i,\,i=1,2,\vspace{0.1cm}\\
(x_1,x_2)\in\sigma_1(Z)\times \sigma_2(Z)=[a_1,a_1+\e)\times(a_2-\e,a_2].
\end{array}\right.
\end{equation}

\begin{figure}[h!]
	\centering
	\bigskip
	\begin{overpic}[width=8cm]{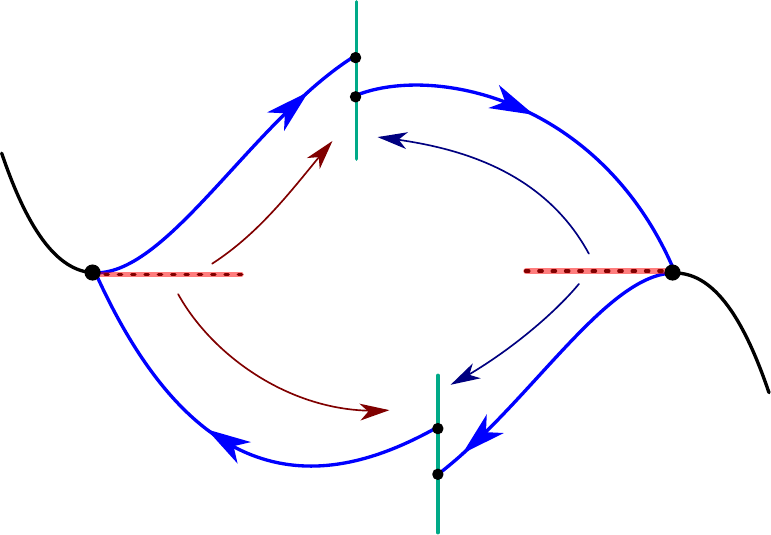}
		\put(11,36){{\footnotesize $p_1$}}
		\put(87.5,36){{\footnotesize $p_2$}}
		\put(42,66){{\footnotesize $\tau_1^u$}}
			\put(57.5,1){{\footnotesize $\tau_2^u$}}
		\put(42,55){{\scriptsize $\widetilde{c}_1$}}
		\put(47,60.5){{\scriptsize $\lambda_0^1$}}
				\put(58,14){{\scriptsize $\widetilde{c}_2$}}
		\put(51,6.5){{\scriptsize $\lambda_0^2$}}	
	\end{overpic}
	\bigskip
	\caption{ Splitting of the separatrices for a perturbed system $Z\in\V$. }	\label{splitting}
\end{figure}

In what follows we use the  crossing system \eqref{system1} to describe the bifurcation diagram of $Z_0$ at $\Gamma_0$ assuming the set of hypotheses (DRF-A) (see Figure \ref{bif2fr1}).

\begin{theorem}\label{bifdig_foldregular}
	Let $Z_{0}$ be a nonsmooth vector field having a $\s$-polycycle satisfying the set of hypotheses (DRF-A). Therefore, there exists an annulus $\mathcal{A}_0$ around $\Gamma_0$ such that for each annulus $\mathcal{A}$, with $\Gamma_0\subset\mathcal{A}\subset \mathcal{A}_0,$ there exist neighborhoods $\V\subset\Or$ of  $Z_0$ and $V\subset\R^2$ of $(0,0)$, a surjective function $(\bg_1,\bg_2): \V \rightarrow V$ with $(\bg_1,\bg_2)(Z_0)=(0,0)$, and two smooth functions $\gamma_{1},\gamma_{2}:\V\rightarrow \R$ with $\gamma_1(Z_0)=\gamma_2(Z_0)=0,$ for which the following statements hold inside $\mathcal{A}$.
	\begin{enumerate}
		\item If $\beta_2>0$ and $\beta_1>\gamma_{1}$, then $Z$  has a sliding cycle containing the regular-fold singularity $p_2$ and a unique sliding segment.
		\item If $\beta_2>0$ and $\beta_1=\gamma_{1}$, then $Z$ has a C-attracting $\s$-polycycle containing the regular-fold singularity $p_2$.
		\item If $\beta_2>0$ and $0<\beta_1<\gamma_{1}$, then $Z$ has a hyperbolic attracting crossing limit cycle.
		\item If $\beta_2>0$ and $\beta_1=0$, then $Z$ has a hyperbolic attracting crossing limit cycle and a heteroclinic connection between $p_1$ and $p_2$.
		\item If $\beta_2>0$ and $\beta_1<0$, then $Z$ has a hyperbolic attracting crossing limit cycle.
		\item If $\beta_2=0$ and $\beta_1<0$, then $Z$ has a hyperbolic attracting crossing limit cycle and a heteroclinic connection between $p_1$ and $p_2$.
		\item If $\beta_2=\beta_1=0$, then $Z$ has a C-attracting $\s$-polycycle containing two  regular-fold singularities.
		\item If $\beta_1<0$ and $\gamma_2<\beta_2<0$, then $Z$ has a hyperbolic attracting crossing limit cycle.
\item If $\beta_1<0$ and $\beta_2=\gamma_2$, then $Z$ has a C-attracting $\s$-polycycle containing the  regular-fold singularity $p_1$.
\item If $\beta_1<0$ and $\beta_2<\gamma_2$, then $Z$ has a sliding cycle containing the  regular-fold singularity $p_1$ and a unique sliding segment.
		\item If $\beta_2<0$ and $\beta_1=0$, then $Z$ has a sliding cycle containing two regular-fold singularities and one sliding segment.

		\item If $\beta_2<0$ and $\beta_1>0$, then $Z$ has a sliding cycle containing two regular-fold singularities and two sliding segments.

		\item If $\beta_2=0$ and $\beta_1>0$, then $Z$ has a sliding cycle containing two regular-fold singularities and one sliding segment.
	\end{enumerate}	
	Here,
	$$\gamma_{1}=-\dfrac{\kappa_1}{\widetilde d_2^2}\bg_2^2+\er_3(\bg_2) \textrm{ and } \gamma_{2}=-\dfrac{\kappa_2}{\widetilde d_1^2}\bg_1^2+\er_3(\bg_1).$$  
	In addition, in the cases $(1),$ and $(10)-(13),$ $Z$ does not admit limit cycles.
\end{theorem}

\begin{proof}
From the construction of the crossing system \eqref{algsystem}, performed in Section \ref{descripregtan}, we get the existence of an annulus $\mathcal{A}_0$ around $\Gamma_0$ and neighborhoods $\V_0\subset\Or$ of  $Z_0$ and $V_0\subset\R^2$ of $(0,0)$, for which the crossing system \eqref{system1} is well defined.

Now, given an annulus $\mathcal{A}$, with $\Gamma_0\subset\mathcal{A}\subset \mathcal{A}_0,$ let $\e>0$ satisfy $[a_1,a_1+\e)\times \{0\}\subset \mathcal{A}$ and $(a_2-\e,a_2]\times \{0\}\subset \mathcal{A}$. Consider the function $F:\V_0\times(-\e,\e)^2\times V_0 \rightarrow \rn{2}$ given by 
	\[F(Z,\xi_1,\xi_2,\bg_1,\bg_2)=(F_{1}(Z,\xi_1,\xi_2,\bg_1),F_2(Z,\xi_1,\xi_2,\bg_2)),\]
	where 
	$$\begin{array}{l}
	F_{1}(Z,\xi_1,\xi_2,\bg_1)= \widetilde{F_1}(Z,\xi_1,\xi_2)-\beta_1(Z) +\bg_1,\\
	F_{2}(Z,\xi_1,\xi_2,\bg_2)= \widetilde{F_2}(Z,\xi_1,\xi_2)-\beta_2(Z) +\bg_2,\\		
	\end{array}$$
	and $\widetilde{F_1}$ and $\widetilde{F_2}$ are given by the left-hand side of the first two equations of \eqref{system1}.
	
	Notice that $F(Z_0,0,0,0,0)=(0,0)$ and $$\det[ D_{(\xi_1,\xi_2)}F(Z_0,0,0,0,0)]=-\widetilde d_1(Z_0)\widetilde d_2(Z_0)\neq 0.$$
	From the Implicit Function Theorem for Banach Spaces, there exist neighborhoods $\V\subset\V$ and $V\subset V_0$  and unique $\Cr$ functions $\Xi_1,\Xi_2:\V\times V\rightarrow (-\e,\e)$ such that 
	\[
	F(Z,\Xi_1(Z,\bg_1,\bg_2),\Xi_2(Z,\bg_1,\bg_2),\bg_1,\bg_2)=(0,0).
	\] Consequently, for each $Z\in\V$, the crossing system \eqref{system1} has at most one solution. In fact, \eqref{system1} is satisfied if, and only if, 
	\begin{equation}\label{iff} 
	(\xi_1,\xi_2)=(\Xi_1(Z,\bg_1(Z),\bg_2(Z)),\Xi_2(Z,\bg_1(Z),\bg_2(Z)))\in[0,\e)\times(-\e,0].
	\end{equation}
	Therefore, each $Z\in\V$ has either a $\s$-polycycle having a unique regular-fold singularity (which occurs when $\xi_1=0$ or $\xi_2=0$) or at most one crossing limit cycle. 
	
	In what follows, we find parameters $(\bg_1(Z),\bg_2(Z))$ satisfying \eqref{iff}.
	
	First, $\Xi_2(Z,\bg_1(Z),\bg_2(Z))=0$ implies the existence of a $\s$-polycycle of $Z$ passing through the regular-fold singularity $p_2$. Applying the Implicit Function Theorem to $g(Z,\xi_1,\bg_2)=F_2(Z,\xi_1,0,\bg_2)$ at $(Z_0,0,0)$, we obtain the existence of a unique $\Cr$ function $\widetilde{\Xi}_1(Z,\bg_2)$ such that $g(Z,\widetilde{\Xi}_1(Z,\bg_2),\bg_2)=0$. In addition,
	$$\widetilde{\Xi}_1(Z,\bg_2)=\dfrac{\beta_2}{\widetilde{d}_2(Z)}+\er_2(\bg_2)=\er_1(\bg_2).$$
	Now, applying the Implicit Function Theorem to $h(Z,\bg_1,\bg_2)=F_1(Z,\widetilde{\Xi}_1(Z,\bg_2),0,\bg_1)$ at the point $(Z_0,0,0)$
	we obtain a function $\overline{\bg_1}(Z,\bg_2)$ such that $h(Z,\overline{\bg_1}(Z,\bg_2),\bg_2)=0$. It follows directly from the expression of $h$ that
	$$\overline{\bg_1}(Z,\bg_2)=-\dfrac{\kappa_1(Z)}{\widetilde{d}_2(Z)^2}\bg_2^2+\er_3(\bg_2).$$
	Hence, it shows that $F(Z,\widetilde{\Xi}_1(Z,\bg_2(Z)),0,\overline{\bg_1}(Z,\bg_2(Z)),\bg_2(Z))=(0,0)$. From uniqueness of the solution,
	\[\Xi_1(Z,\overline
{\bg_1}(Z,\bg_2(Z)),\bg_2(Z))=\widetilde{\Xi}_1(Z,\bg_2(Z))\quad \text{and}\quad \Xi_2(Z,\overline
{\bg_1}(Z,\bg_2(Z)),\bg_2(Z))=0.
\] Thus, $\Xi_2(Z,\bg_1(Z),\bg_2(Z))=0$ if, and only if, $\bg_1(Z)=\overline{\bg_1}(Z,\bg_2(Z))$. Moreover, since $\widetilde{d}_2(Z)>0$, it follows that $\widetilde{\Xi}_1(Z,\bg_2(Z))\in[0,\e)$ if, and only if, $\bg_2(Z)\geq 0$. Finally, defining $\gamma_{1}(Z)=\overline{\bg_1}(Z,\bg_2(Z))$, we have that each $Z\in\V,$ satisfying $\bg_1(Z)=\cg_1(Z)$ and $\bg_2(Z)\geq 0,$ has a $\s$-polycycle containing a unique regular-fold singularity, namely  $p_2=(a_2,0)$.
	
	Analogously, $\Xi_1(Z,\bg_1(Z),\bg_2(Z))=0$ implies the existence of a $\s$-polycycle of $Z$ passing through the regular-fold singularity $p_1$. Following the same ideas above, we obtain a unique $\Cr$ function $\widetilde{\Xi}_2(Z,\bg_1)$ such that $F_{1}(Z,0,\widetilde{\Xi}_2(Z,\bg_1),\bg_1)=0$. Furthermore
	$$\widetilde{\Xi}_2(Z,\bg_1)=\dfrac{\beta_1}{\widetilde{d}_1(Z)}+\er_2(\bg_1).$$
	Also, we obtain a unique $\Cr$ function $\overline{\bg_2}(Z,\bg_1)$ such that
	$$F_2(Z,0,\widetilde{\Xi}_2(Z,\bg_1),\overline{\bg_2}(Z,\bg_1))=0\quad\text{and}\quad\overline{\bg_2}(Z,\bg_1)=-\dfrac{\kappa_2(Z)}{\widetilde{d}_1(Z)^2}\bg_1^2+\er_3(\bg_1).$$
	Therefore, $F(Z,0,\widetilde{\Xi}_2(Z,\bg_1(Z)),\bg_1(Z),\overline{\bg_2}(Z,\bg_1(Z)))=(0,0)$. Again, from uniqueness of the solution, it follows that 
	\[
	\begin{array}{l}
	\Xi_1(Z,\bg_1(Z),\overline
	{\bg_2}(Z,\bg_1(Z)))=0\quad \text{and}\\
	 \Xi_2(Z,\bg_1(Z),\overline
	{\bg_2}(Z,\bg_1(Z)))=\widetilde{\Xi}_2(Z,\bg_1(Z)).
	\end{array}
	\] Hence, $\Xi_1(Z,\bg_1(Z),\bg_2(Z))=0$ if, and only if, $\bg_2(Z)=\overline{\bg_2}(Z,\bg_1(Z))$. Also, since $\widetilde{d}_1(Z)>0$, it follows that $\widetilde{\Xi}_2(Z,\bg_1(Z))\in(-\e,0]$ if, and only if, $\bg_1(Z)\leq 0$. Defining $\gamma_{2}(Z)=\overline{\bg_2}(Z,\bg_1(Z))$, we have that each $Z\in\V$ satisfying $\bg_2(Z)=\cg_2(Z)$ and $\bg_1(Z)\leq 0$ has a $\s$-polycycle containing a unique regular-fold singularity given by $p_1=(a_1,0)$.
	
	The C-attractiveness  of the  $\s$-polycycle detected above is given by Proposition \ref{stab+_prop}. Hence, items $(2),(7)$ and $(9)$ are proved. 
	
	In what follows we shall identify when the solution $\big(\Xi_1(Z,\bg_1(Z),\bg_2(Z)),$ $\Xi_2(Z,\bg_1(Z),\bg_2(Z))\big)$ of the crossing system \eqref{system1} corresponds to a crossing limit cycle.

	Note that
	\begin{equation}
	\label{expxi}
		\begin{array}{l}
	\Xi_1(Z,\bg_1(Z),\bg_2(Z))=\dfrac{1}{\widetilde{d}_2(Z)}\bg_2(Z)+\er_2(\bg_1(Z),\bg_2(Z)),\vspace{0.2cm}\\
	\Xi_2(Z,\bg_1(Z),\bg_2(Z))=\dfrac{1}{\widetilde{d}_1(Z)}\bg_1(Z)+\er_2(\bg_1(Z),\bg_2(Z)).
	\end{array}
	\end{equation}
	Recall that $\Xi_2(Z,\cg_1(Z),\bg_2(Z))=0$. Using \eqref{expxi}, we expand $\Xi_2(Z,\bg_1(Z),\bg_2(Z))$ around $\bg_1(Z)=\cg_1(Z)$ as
	$$
	\begin{array}{rl}
	\Xi_2(Z,\bg_1(Z),\bg_2(Z))=&\left(\dfrac{1}{\widetilde{d}_1(Z)}+\er_1(\bg_2(Z))\right)(\bg_1(Z)-\gamma_1(Z))\vspace{0.2cm}\\
	&+ \er_2(\bg_1(Z)-\gamma_1(Z)).
	\end{array}
	$$
	Since $\widetilde{d}_1(Z)>0$, it follows that $\Xi_2(Z,\bg_1(Z),\bg_2(Z))\in (-\e,0)$ if, and only if, $\bg_1(Z)<\gamma_1(Z)$. Also, $\Xi_1(Z,\gamma_{1}(Z),\bg_2(Z))\in(0,\e)$ for $\bg_2(Z)>0$ and, thus, $\Xi_1(Z,\beta_1(Z),\bg_2(Z))\in(0,\e)$ for $\bg_2(Z)>0$ and $\beta_1(Z)$ sufficiently close to $\gamma_1(Z)$. Finally, we conclude that $(\Xi_1(Z,\bg_1(Z),\bg_2(Z)),$ $\Xi_2(Z,\bg_1(Z),\bg_2(Z)))\in(0,\e)\times(-\e,0)$ with $\bg_2(Z)>0$ if, and only if, $\bg_1(Z)<\cg_1(Z)$. Hence, we get the existence or not of crossing limit cycles in items $(1),(3),(4),$ and $(5)$.

	Analogously, since $\Xi_1(Z,\bg_1(Z),\cg_2(Z))=0$,  the expansion of $\Xi_1(Z,\bg_1(Z),\bg_2(Z))$ around $\bg_2(Z)=\cg_2(Z)$ writes
	$$
	\begin{array}{rl}
	\Xi_1(Z,\bg_1(Z),\bg_2(Z))=&\left(\dfrac{1}{\widetilde{d}_2(Z)}+\er_1(\bg_1(Z))\right)(\bg_2(Z)-\cg_2(Z))\vspace{0.2cm}\\
	&+ \er_2(\bg_2(Z)-\cg_2(Z)).
	\end{array}
	$$
	Recalling that $\widetilde{d}_2(Z)>0$, we obtain $\Xi_1(Z,\bg_1(Z),\bg_2(Z))\in (0,\e)$ if, and only if, $\bg_2(Z)>\cg_2(Z)$. Also, $\Xi_2(Z,\bg_{1}(Z),\cg_2(Z))\in(-\e,0)$ for $\bg_1(Z)<0$. Therefore, $\Xi_2(Z,\bg_1(Z),\bg_2(Z))\in(-\e,0)$, for $\bg_1(Z)<0$ and $\bg_2(Z)$ sufficiently close to $\cg_2(Z)$. Finally, we conclude that $(\Xi_1(Z,\bg_1(Z),\bg_2(Z)),$ $\Xi_2(Z,\bg_1(Z),\bg_2(Z)))\in(0,\e)\times(-\e,0)$ with $\bg_1(Z)<0$ if, and only if, $\bg_2(Z)>\cg_2(Z)$. Hence, we get the existence or not of crossing limit cycles in items $(6)$, $(8)$ and $(10).$

	Now, notice that 
\[\begin{array}{l}
\Xi_1(Z,0,\bg_2(Z))=\dfrac{1}{\widetilde{d}_2(Z)}\bg_2(Z)+\er_2(\bg_2(Z)),\vspace{0.2cm}\\
\Xi_2(Z,\bg_1(Z),0)=\dfrac{1}{\widetilde{d}_1(Z)}\bg_1(Z)+\er_2(\bg_1(Z)).
\end{array}
\]
Therefore, $\Xi_1(Z,0,\bg_2(Z))<0$ and $\Xi_2(Z,0,\bg_2(Z))>0$, provided that $\bg_2(Z)<0$ and $\bg_1(Z)>0$. This  means that \eqref{system1} has no solutions when $\bg_1(Z)=0$ and $\bg_2(Z)<0$ or $\bg_2(Z)=0$ and $\bg_1(Z)>0$. From continuity, if follows that $\Xi_1(Z,\bg_1(Z),\bg_2(Z))\in(-\e,0)\times(0,\e)$ for $\bg_1(Z)>0$ and $\bg_2(Z)<0$. Hence, we conclude the non-existence of crossing limit cycles in items $(11)$, $(12)$ and $(13)$.

Notice that $\bg_1(Z)= T_1^u(Z)(a_1)-[D_1(Z)]^{-1}\circ T_2^s(Z)(a_2)$ and $\bg_2(Z)=T_2^u(Z)(a_2)-[D_2(Z)]^{-1}\circ T_1^s(Z)(a_1)$. Heteroclinic connections exist when $\bg_1(Z)=0$ or $\bg_1(Z)=0.$ If either $\bg_1(Z)=0$ and $\bg_2(Z)>0$ or $\bg_1(Z)<0$ and $\bg_2(Z)=0,$ the heteroclinic connection is not contained in a sliding cycle. This correspond to items $(4)$ and $(6).$
	
Finally, the sliding region corresponding to $Z$ is given by $\s^s=(a_1-\e,a_1)\times\{0\}\cup (a_2,a_2-\e)\times\{0\}$, for every $Z\in\V$, the sliding vector field $F_{Z}$ is regular in $\s^s$,  $\pi_1\circ F_Z(a_1,0)>0,$ and $\pi_1\circ F_Z(a_2,0)<0$. Therefore, the sliding phenomena detected in items $(1)$ and $(10)-(13)$ follows straightforwardly. Hence, the proof is concluded.			
\end{proof}

\begin{remark}
	We notice that the set of displacement functions associated with a nonsmooth vector field $Z_0$ at a $\s$-polycycle satisfying the hypotheses (DRF-B) generates the same system of equations \eqref{system1} obtained for the case (DRF-A). Nevertheless, the domain $\sigma_1\times\sigma_2$ will be given by $\sigma_1\times\sigma_2=(a_1-\e,a_1]\times[a_2,a_2+\e)$. The bifurcation diagram of $Z_0$ can be obtained analogously and has the same structure and objects of the case (DRF-A). Therefore, we shall omit it here. 
	\end{remark}

\section*{Acknowledgements}

The authors are very grateful to Marco A. Teixeira, who suggested the problem, for meaningful discussions and constructive criticism on the manuscript.

KSA is partially supported by Coordena\c{c}\~{a}o de Aperfei\c{c}oamento de Pessoal de N\'{i}vel Superior (CAPES) grant 88887.308850/2018-00.
OMLG is partially supported by S\~{a}o Paulo Research Foundation (FAPESP) grant 2015/22762-5. DDN is partially supported by S\~{a}o Paulo Research Foundation (FAPESP) grants 2022/09633-5, 2019/10269-3, and 2018/13481-0, and by Conselho Nacional de Desenvolvimento Cient\'{i}fico e Tecnol\'{o}gico (CNPq) grants 309110/2021-1.
All the authors are partially supported by Conselho Nacional de Desenvolvimento Cient\'{i}fico e Tecnol\'{o}gico (CNPq) grant 438975/2018-9.
	  
	  \bibliographystyle{abbrv}
\bibliography{references.bib}

\begin{thebibliography}{10}

\bibitem{A16}
K.~S. Andrade.
\newblock {\em On degenerate cycles in discontinuous vector fields and the
  Dulac's problem}.
\newblock Thesis (Unicamp), 2016.

\bibitem{AJMT17}
K.~S. Andrade, M.~R. Jeffrey, R.~M. Martins, and M.~A. Teixeira.
\newblock Homoclinic boundary-saddle bifurcations in planar nonsmooth vector
  fields.
\newblock {\em International Journal of Bifurcation and Chaos}, 32(04), Mar.
  2022.

\bibitem{BPET18}
L.~Benadero, E.~Ponce, A.~El~Aroudi, and F.~Torres.
\newblock Limit cycle bifurcations in resonant {LC} power inverters under zero
  current switching strategy.
\newblock {\em Nonlinear Dynamics}, 91(2):1145--1161, 2018.

\bibitem{BS16}
C.~Bonet-Rev\'{e}s and T.~M-Seara.
\newblock Regularization of sliding global bifurcations derived from the local
  fold singularity of {F}ilippov systems.
\newblock {\em Discrete Contin. Dyn. Syst.}, 36(7):3545--3601, 2016.

\bibitem{BCT12}
C.~A. Buzzi, T.~Carvalho, and M.~A. Teixeira.
\newblock On three-parameter families of {F}ilippov systems- the fold-saddle
  singularity.
\newblock {\em International Journal of Bifurcation and Chaos}, 22(12):1250291,
  2012.

\bibitem{Fei1}
M.~Di~Bernardo, M.~I. Feigin, S.~J. Hogan, and M.~E. Homer.
\newblock Local analysis of {C}-bifurcations in n-dimensional piecewise-smooth
  dynamical systems.
\newblock {\em Chaos, Solitons and Fractals: the interdisciplinary journal of
  Nonlinear Science, and Nonequilibrium and Complex Phenomena},
  11(10):1881--1908, 1999.

\bibitem{Fei2}
M.~I. Feigin.
\newblock On the structure of {C}-bifurcation boundaries of
  piecewise-continuous systems.
\newblock {\em Journal of Applied mathematics and Mechanics}, 42(5):885--895,
  1978.

\bibitem{F}
A.~F. Filippov.
\newblock {\em Differential equations with discontinuous righthand sides:
  control systems}, volume~18.
\newblock Springer Science \& Business Media, 1988.

\bibitem{FPT15}
E.~Freire, E.~Ponce, and F.~Torres.
\newblock On the critical crossing cycle bifurcation in planar {F}ilippov
  systems.
\newblock {\em Journal of Differential Equations}, 259(12):7086--7107, 2015.

\bibitem{GTS}
M.~Guardia, T.~M. Seara, and M.~A. Teixeira.
\newblock Generic bifurcations of low codimension of planar filippov systems.
\newblock {\em Journal of Differential Equations}, 250(4):1967--2023, 2011.

\bibitem{H}
P.~Hartman.
\newblock On local homeomorphisms of euclidean spaces.
\newblock {\em Bol. Soc. Mat. Mexicana}, 5(2):220--241, 1960.

\bibitem{KRG}
Y.~A. Kuznetsov, S.~Rinaldi, and A.~Gragnani.
\newblock One-parameter bifurcations in planar {F}ilippov systems.
\newblock {\em International Journal of Bifurcation and chaos},
  13(08):2157--2188, 2003.

\bibitem{LH13}
F.~Liang and M.~Han.
\newblock The stability of some kinds of generalized homoclinic loops in planar
  piecewise smooth systems.
\newblock {\em International Journal of Bifurcation and Chaos}, 23(02):1350027,
  2013.

\bibitem{LW17}
F.~Liang and D.~Wang.
\newblock Limit cycle bifurcations near a piecewise smooth generalized
  homoclinic loop with a saddle-fold point.
\newblock {\em International Journal of Bifurcation and Chaos}, 27(05):1750071,
  2017.

\bibitem{LR14}
Y.~Liu and V.~G. Romanovski.
\newblock Limit cycle bifurcations in a class of piecewise smooth systems with
  a double homoclinic loop.
\newblock {\em Applied Mathematics and Computation}, 248:235--245, 2014.

\bibitem{NR21}
D.~D. Novaes and G.~Rond\'{o}n.
\newblock Smoothing of nonsmooth differential systems near regular-tangential
  singularities and boundary limit cycles.
\newblock {\em Nonlinearity}, 34(6):4202--4263, 2021.

\bibitem{NR22}
D.~D. Novaes and G.~Rond\'{o}n.
\newblock On limit cycles in regularized {F}ilippov systems bifurcating from
  homoclinic-like connections to regular-tangential singularities.
\newblock {\em Phys. D}, 442:Paper No. 133526, 15, 2022.

\bibitem{NTZ18}
D.~D. Novaes, M.~A. Teixeira, and I.~O. Zeli.
\newblock The generic unfolding of a codimension-two connection to a two-fold
  singularity of planar {F}ilippov systems.
\newblock {\em Nonlinearity}, 31(5):2083, 2018.

\bibitem{Wu22}
F.~Wu, L.~Huang, and J.~Wang.
\newblock Bifurcation of the critical crossing cycle in a planar piecewise
  smooth system with two zones.
\newblock {\em Discrete and Continuous Dynamical Systems - Series B},
  27(9):5047--5083, 2022.

\end{thebibliography}

\end{document}